\documentclass[a4paper,10pt,draft]{amsart}

\usepackage{amsmath,amssymb,amsthm}
\usepackage[alphabetic, abbrev]{amsrefs}
\usepackage{enumerate}
\usepackage[all]{xy}
\newdir{ >}{{}*!/-5pt/@{>}} 
\SelectTips{cm}{} 
\usepackage{hyperref}

\numberwithin{equation}{section}

\newtheorem{theorem}[subsection]{Theorem}
\newtheorem{corollary}[subsection]{Corollary}
\newtheorem{lemma}[subsection]{Lemma}
\newtheorem{proposition}[subsection]{Proposition}
\theoremstyle{definition}
\newtheorem{definition}[subsection]{Definition}

\newtheorem{remark}[subsection]{Remark}

\newtheorem{construction}[subsection]{Construction}


\newcommand{\bC}{\mathbb{C}}

\newcommand{\bZ}{\mathbb{Z}}
\newcommand{\bF}{\mathbb{F}}


\newcommand{\cC}{\mathcal{C}}
\newcommand{\cD}{\mathcal{D}}
\newcommand{\cE}{\mathcal{E}}

\newcommand{\cI}{\mathcal{I}}

\newcommand{\cN}{\mathcal{N}}


\DeclareMathOperator{\hocolim}{hocolim}

\DeclareMathOperator{\colim}{colim}
\DeclareMathOperator{\const}{const}

\DeclareMathOperator{\Tot}{Tot}
\DeclareMathOperator{\sh}{sh}
\DeclareMathOperator{\srep}{srep}


\newcommand{\op}{{\mathrm{op}}}
\newcommand{\id}{{\mathrm{id}}}
\newcommand{\tensor}{\otimes}

\newcommand{\iso}{\cong}
\newcommand{\concat}{\sqcup}
\newcommand{\bld}[1]{{\mathbf{#1}}}

\newcommand{\chk}{\mathrm{Ch}_k}
\newcommand{\schk}{\mathrm{sCh}_k}
\newcommand{\chQ}{\mathrm{Ch}_{\mathbb Q}}
\newcommand{\chIk}{\mathrm{Ch}^{\mathcal I}_k}
\newcommand{\cdga}{\mathrm{cdga}}
\newcommand{\cdgaQ}{\cdga_{\mathbb Q}}
\newcommand{\CchIk}{\mathrm{Ch}^{\mathcal I}_k[\cC]}
\newcommand{\CchIZ}{\mathrm{Ch}^{\mathcal I}_{\mathbb Z}[\cC]}
\newcommand{\Mod}{\mathrm{Mod}}
\newcommand{\sMod}{\mathrm{sMod}}
\newcommand{\Echk}{\mathrm{Ch}_k[\cE]}
\newcommand{\EchIk}{\mathrm{Ch}_k^{\mathcal I}[\cE]}
\newcommand{\EchZ}{\mathrm{Ch}_{\mathbb Z}[\cE]}
\newcommand{\EcchZ}{\mathrm{Ch}_{\mathbb Z}[\cE^{\mathrm{cof}}]}
\newcommand{\Ecchk}{\mathrm{Ch}_k[\cE^{\mathrm{cof}}]}

\newcommand{\sset}{\mathrm{sSet}}
\newcommand{\cat}{\mathrm{cat}}

\newcommand{\ie}{\emph{i.e.}}
\newcommand{\hot}{\hat{\otimes}}
\newcommand{\ra}{\rightarrow}
\newcommand{\arxivlink}[1]{\href{http://arxiv.org/abs/#1}{\texttt{arXiv:#1}}}

\usepackage{ifdraft}

\ifdraft{%
  \newcommand{\Bemerkung}[1]{{\marginpar{\hspace{0.2\marginparwidth}\rule{0.6\marginparwidth}{0.75mm}\hspace{0.2\marginparwidth}}\noindent\bfseries[#1]}}
}{
  \newcommand{\Bemerkung}[1]{}
}

\title{A strictly commutative model for the cochain algebra of a space}

\author{Birgit Richter} 
\address{Universit\"at Hamburg, Germany}
\email{birgit.richter@uni-hamburg.de}
\author{Steffen Sagave} \address{Radboud Universiteit Nijmegen, The
  Netherlands}  
\email{s.sagave@math.ru.nl}

\date{\today}
\subjclass[2010]{Primary 55P48; 
Secondary 16E45, 
18D50, 
55P62
}

\keywords{Homotopy theory, rational homotopy theory, E-infinity
  differential graded algebra, Barratt--Eccles operad, singular cochains}  
\begin{document}
\begin{abstract}
  The commutative differential graded algebra $A_{\mathrm{PL}}(X)$ of
  polynomial forms on a simplicial set $X$ is a crucial tool in
  rational homotopy theory. In this note, we construct an integral
  version $A^{\cI}(X)$ of $A_{\mathrm{PL}}(X)$. Our approach uses
  diagrams of chain complexes indexed by the category of finite sets
  and injections $\cI$ to model $E_{\infty}$ differential graded
  algebras by strictly commutative objects, called commutative
  $\cI$-dgas. We define a functor $A^{\cI}$ from simplicial sets to
  commutative $\cI$-dgas and show that it is a commutative lift of the
  usual cochain algebra functor. In particular, it gives rise to a new
  construction of the $E_{\infty}$ dga of cochains.

  The functor $A^{\cI}$ shares many properties of $A_{\mathrm{PL}}$,
  and can be viewed as a generalization of $A_{\mathrm{PL}}$ that
  works over arbitrary commutative ground rings. Working over the
  integers, a theorem by Mandell implies that $A^{\cI}(X)$ determines
  the homotopy type of $X$ when $X$ is a nilpotent space of finite
  type.
\end{abstract}

\maketitle

\section{Introduction}
Determining the homotopy type of a topological space is a difficult task in
general. One possibility of simplifying the problem is to aim for algebraic
models of spaces, so that the study of homotopy types reduces to an algebraic
question. If one is interested in the homotopy type of a rational nilpotent
space of finite type, then this is possible, and the Sullivan cochain algebra
is such an algebraic model: the algebra of rational singular cochains of a
space, $C(X;\mathbb Q)$, is quasi-isomorphic to the commutative differential
graded algebra (cdga) $A_{\mathrm{PL}}(X)$ of polynomial forms on $X$, which
is a very powerful tool in rational homotopy
theory~\cite{sullivan,Bousfield-G_rational}. The functor
$A_{\mathrm{PL}}$ has a contravariant adjoint, called the
Sullivan realization in \cite[\S 17]{FHT}. With the help of this adjoint pair of
functors one can determine the homotopy type of rational nilpotent spaces of
finite type (see \cite[Chapter~9]{Bousfield-G_rational}, or
\cite[Theorem 1.25]{hess} for the simply connected case). 

For a general commutative ring $k$ the cochains $C(X;k)$ on a space $X$ with
values in 
$k$ form a differential graded algebra whose cohomology is the
singular cohomology $H^*(X;k)$ of~$X$. The multiplication of $C(X;k)$
induces the cup product on $H^*(X;k)$. However, for general  $k$, there is
no cdga which is quasi-isomorphic to $C(X;k)$, for example because the
Steenrod operations witness the non-commuta\-ti\-vi\-ty of
$C(X;\bF_p)$. So it seems that we cannot hope for a strictly commutative
model for the cochains of a space that determines the homotopy type.
However, $C(X;k)$ is always commutative up to coherent
homotopy. This can be encoded using the language of operads
\cite{may-gils}: the multiplication of $C(X;k)$ extends
to the action of an $E_{\infty}$ operad in chain complexes turning $C(X;k)$
into an $E_{\infty}$ dga.

This gives rise to an algebraic model for the homotopy type of a space by a
result of Mandell. He shows that the cochain functor $C(-;\mathbb Z)$ to
$E_{\infty}$ dgas classifies nilpotent spaces of finite type up
to weak equivalence~\cite[Main Theorem]{Mandell_cochains-homotopy-type}. But
here, the algebraic model consists of the cochain algebra together with its
$E_\infty$-algebra structure, so this algebraic model is rather involved. 

One can describe homotopy coherent commutative multiplications on
chain complexes using diagram categories instead of operads. Let $\cI$
be the category with objects the finite sets
$\bld{m}= \{1\,\dots, m\}, m\geq 0$, with the convention that
$\bld{0}$ is the empty set. Morphisms in $\cI$ are the
injections. Concatenation in $\cI$ and the tensor product of chain
complexes of $k$-modules give rise to a symmetric monoidal
product~$\boxtimes$ on the category $\chIk$ of $\cI$-diagrams in
$\chk$. A \emph{commutative $\cI$-dga} is a commutative monoid in
$(\chIk,\boxtimes)$ or, equivalently, a lax symmetric monoidal functor
$\cI \to \chk$. Equipped with suitable model structures, the category
of commutative $\cI$-dgas, $\CchIk$, is Quillen equivalent to the
category of $E_{\infty}$ dgas~\cite[\S 9]{Richter-S_algebraic}. This
is analogous to the situation in spaces, where commutative monoids in
$\cI$-diagrams of spaces are equivalent to $E_{\infty}$
spaces~\cite[\S 3]{Sagave-S_diagram}.

Chasing the $E_{\infty}$~dga of cochains $C(X;k)$ on a space $X$
through the chain of Quillen equivalences relating $E_{\infty}$~dgas
and commutative $\cI$-dgas shows that $C(X;k)$ can be represented by a
commutative $\cI$-dga. The purpose of this paper is to construct a
direct point set level model $A^{\cI}(X)$ for the quasi-isomorphism
type of commutative $\cI$-dgas determined by $C(X;k)$ that should be
viewed as an integral generalization of $A_{\mathrm{PL}}(X)$. Despite
the fact that $A_{\mathrm{PL}}(X)$ was introduced more than 40 years
ago and has been widely studied, it appears that a direct integral
counterpart was neither known nor expected to exist.

If $E$ is a commutative $\cI$-dga, then its Bousfield--Kan homotopy
colimit $E_{h\cI}$ has a canonical action of the Barratt--Eccles
operad, which is an $E_{\infty}$ operad built from the symmetric
groups. The commutative $\cI$-dga $A^{\cI}(X)$ thus gives rise to an
$E_{\infty}$ dga $A^{\cI}(X)_{h\cI}$ which can be compared to the
usual cochains without referring to model structures.

\begin{theorem}\label{thm:E-infty-comparison-intro}
  The contravariant functors $X \mapsto A^{\cI}(X)_{h\cI}$ and
  $X \mapsto C(X;k)$ from simplicial sets to $E_{\infty}$ dgas are
  naturally quasi-isomorphic.
\end{theorem}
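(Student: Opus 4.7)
The plan is to exhibit a natural zig-zag of $E_{\infty}$ dga quasi-isomorphisms between $A^{\cI}(X)_{h\cI}$ and $C(X;k)$, and to verify it by descending to the case of standard simplices where it reduces to an acyclicity computation. I would proceed in three steps.

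First, I would identify the natural comparison map. Since $A^{\cI}(X)$ is constructed in the preceding sections as a commutative $\cI$-dga lift of the cochain algebra, the construction itself should yield a canonical morphism connecting $A^{\cI}(X)_{h\cI}$ and $C(X;k)$ (possibly as a zig-zag through an auxiliary model such as the normalized cochains of a cosimplicial simplicial $k$-module). Both sides carry natural Barratt--Eccles operad actions: on $A^{\cI}(X)_{h\cI}$ this comes from the general fact (recalled in the introduction) that $(-)_{h\cI}$ of a commutative $\cI$-dga is acted on by the Barratt--Eccles operad, and on $C(X;k)$ this is the standard simplicial action. The comparison should be designed to intertwine these two actions.

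Second, I would exploit the fact that both contravariant functors convert the canonical colimit presentation $X \cong \colim_{\Delta^n \to X} \Delta^n$ (indexed over the simplex category of $X$) into a homotopy limit of $E_{\infty}$ dgas. For $C(-;k)$ this is built into the definition via normalized cochains of the corresponding cosimplicial $k$-module. For $A^{\cI}(-)_{h\cI}$ this should follow from the fact that $A^{\cI}$ sends colimits of simplicial sets to limits of commutative $\cI$-dgas (so it is determined as a right Kan extension from $\Delta$), combined with a compatibility of $(-)_{h\cI}$ with these limits, which can be checked by a Bousfield--Kan spectral sequence argument or a cofinality computation in $\cI$.

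Third, the comparison reduces to $X = \Delta^n$, where the claim becomes that $A^{\cI}(\Delta^n)_{h\cI}$ is quasi-isomorphic to $k$ concentrated in degree zero. This is the integral analogue of the Poincar\'e lemma for $A_{\mathrm{PL}}$, and should follow from an explicit simplicial contracting homotopy on $A^{\cI}(\Delta^n)$ coming from the cone structure of $\Delta^n$, which descends through the Bousfield--Kan homotopy colimit.

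The main obstacle is the descent step: one must show that $X \mapsto A^{\cI}(X)_{h\cI}$ really converts simplicial colimits into homotopy limits of $E_{\infty}$ dgas, which involves interchanging a limit in $\sset$ with a homotopy colimit over $\cI$. This is the technically delicate point, because the Bousfield--Kan homotopy colimit does not commute with arbitrary limits on the nose; a careful spectral sequence or explicit cofinal subcategory argument is needed, and one must verify that the resulting isomorphism is compatible with the Barratt--Eccles action. Once descent is in hand, the acyclicity on simplices and the naturality of the comparison map package together to give the stated quasi-isomorphism.
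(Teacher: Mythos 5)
The central gap is your first step: there is no canonical $E_\infty$ comparison map between $A^{\cI}(X)_{h\cI}$ and $C(X;k)$ arising from the construction, and producing one is precisely the difficulty that the paper deliberately avoids. The comparison that the construction does yield is the zig-zag $A^{\cI}(X) \to B^{\cI}(X) \leftarrow C^{\cI}(X)$ of Section~5, where $C^{\cI}(X) = F^{\cI}_{\bld{0}}C(X;k)$; this only compares the underlying $\cI$-chain complexes and is not multiplicative, since $C(X;k)$ is not strictly commutative and its $E_\infty$ structure (Eilenberg--Zilber/cup-$i$ data) is not matched by any evident map of $\cE$-algebras to or from $A^{\cI}(X)_{h\cI}$. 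The paper's actual route is different: it checks that $X \mapsto A^{\cI}(X)_{h\cI}$ is a \emph{cochain theory} in the sense of Mandell --- homotopy invariance, excision for subcomplexes, coproducts to products, and $H_*(A(*)) \iso k$ --- and then invokes the uniqueness theorem of \cite{Mandell_cochain-multiplications}, which supplies the natural zig-zag of quasi-isomorphisms of $\cE^{\mathrm{cof}}$-algebras abstractly, with no point-set comparison map needed. Your plan (reduce to $\Delta^n$ by descent, use acyclicity there, then glue) is in outline a sketch of the proof of that uniqueness theorem, but the content you would have to supply --- a natural cosimplicial $E_\infty$ comparison over $\Delta$ and the gluing argument --- is exactly the part left unaddressed by the phrase ``the construction itself should yield a canonical morphism.''

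The descent step you flag as the main obstacle would also fail if taken literally. While $A^{\cI}$ does convert colimits of simplicial sets into limits, the Bousfield--Kan homotopy colimit $(-)_{h\cI}$ does not commute, even up to quasi-isomorphism, with the limit over the category of elements of a general $X$. The paper only needs, and only proves, the two special cases entering the cochain-theory axioms: commutation with arbitrary products of positive $\cI$-fibrant objects (Lemma~\ref{lem:hI-vs-products}) and the homotopy-pullback statement for $* \leftarrow Y \to X$, which is handled by evaluating at $\bld{1}$ via Lemma~\ref{lem:hocolim-of-I-fibrant} and positive fibrancy from Theorem~\ref{thm:chain-complex-level-comparison}; the global gluing over all cells of $X$ happens inside Mandell's theorem, not through an interchange of $(-)_{h\cI}$ with limits. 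Your acyclicity claim on simplices is correct in substance, though in the paper it comes from the level equivalences $U^{\cI} \to A^{\cI}_p$ (Lemma~\ref{lem:AI-BI-CI-level-equivalences}) rather than from a cone-type contraction; the extra-degeneracy contractions are used for the fibration/fibrancy statements, not to compute $H_*(A(\Delta^n))$. As it stands, then, the proposal does not constitute a proof: the existence of the natural $E_\infty$ comparison and the descent interchange are asserted rather than established, and both are the actual mathematical content of the theorem.
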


We prove the theorem using Mandell's uniqueness result for cochain
theories \cite[Main Theorem]{Mandell_cochain-multiplications}.

Since our definition of $A^{\cI}$ does not rely on the existing
constructions of $E_{\infty}$ structures on cochains, the theorem
implies that our approach provides an alternative model of the
$E_{\infty}$ dga $C(X;k)$, namely $A^{\cI}(X)_{h\cI}$ with its
canonical action of the Barratt-Eccles operad. If $k$ is a field of
characteristic $0$, 
then there is a natural quasi-isomorphism
$A^{\cI}(X)_{h\cI} \to A_{\mathrm{PL}}(X)$ relating our approach to
the classical polynomial forms (see Theorem \ref{thm:char0}).

The passage through commutative $\cI$-dgas has the advantage that it
provides a rather simple $E_{\infty}$ model $A^{\cI}(X)_{h\cI}$ for
the cochain algebra of a space $X$. In contrast, the existing
constructions of $E_{\infty}$ structures on the standard model for the
cochain algebra are involved: based on work of
Hinich--Schechtman~\cite{Hinich-S_homotopy-limit}, Mandell~\cite[\S
5]{Mandell_cochain-multiplications} lifts the action of the acyclic
Eilenberg--Zilber operad to the action of an actual $E_{\infty}$
operad. McClure and Smith generalize Steenrod’s cup-$i$-products to
multivariable operations that give the cochains of a space the
structure of an $E_\infty$-algebra via the action of the surjection
operad \cite{McClure-Smith_multivariable}. 
Berger and Fresse~\cite{Berger-F_combinatorial} use elaborate
combinatorial arguments to define an action of the Barratt--Eccles
operad that extends the action of the surjection operad. 
Another approach 
to capture the commutativity of $C(X;k)$ has been pursued by
Karoubi~\cite{Karoubi} who introduces a notion of
\emph{quasi-commutative} dgas that is based on a certain reduced
tensor product, constructs a quasi-commutative model for the cochains,
and uses Mandell's results to relate it to ordinary cochains.

Since it is often easier to work with strictly commutative objects
rather than $E_{\infty}$ objects, we also expect that the commutative
$\cI$-dga $A^{\cI}(X)$ will be a useful replacement of the
$E_{\infty}$ dga $C(X;k)$ in applications. For instance, iterated bar
constructions for $E_\infty$ algebras as developed in
\cite{fresse-bar} are rather involved whereas iterated bar
construction for commutative monoids are straightforward. Commutative
$\cI$-dgas are tensored over simplicial sets whereas enrichments for  
$E_\infty$ monoids are more complicated because the coproduct is not
just the underlying monoidal product. This allows for constructions
such as higher order Hochschild homology \cite{pira-hodge} for
commutative $\cI$-dgas.  

Writing $A^{\cI}(X;\mathbb Z)$ for $A^{\cI}(X)$ when working over
$k = \mathbb Z$, Theorem~\ref{thm:E-infty-comparison-intro} leads to the following reformulation of the main theorem of
Mandell~\cite{Mandell_cochains-homotopy-type} that highlights the
usefulness of $A^{\cI}$:
\begin{theorem}\label{thm:mandell-intro}
Two finite type nilpotent spaces $X$ and $Y$ are weakly equivalent if and 
only if $A^{\cI}(X;\mathbb Z)$ and $A^{\cI}(Y;\mathbb Z)$ are weakly equivalent in $\CchIZ$. 
\end{theorem}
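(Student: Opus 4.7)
The plan is to combine three inputs: Mandell's classification theorem \cite{Mandell_cochains-homotopy-type}, Theorem~\ref{thm:E-infty-comparison-intro}, and the Quillen equivalence between $\CchIZ$ and the category of $E_\infty$ dgas from \cite[\S 9]{Richter-S_algebraic}. Mandell's theorem asserts that finite type nilpotent $X$ and $Y$ are weakly equivalent if and only if the $E_\infty$ dgas $C(X;\mathbb Z)$ and $C(Y;\mathbb Z)$ are linked by a zigzag of $E_\infty$ quasi-isomorphisms. By Theorem~\ref{thm:E-infty-comparison-intro}, such a zigzag exists if and only if $A^{\cI}(X;\mathbb Z)_{h\cI}$ and $A^{\cI}(Y;\mathbb Z)_{h\cI}$ are $E_\infty$ quasi-isomorphic. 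It therefore suffices to show that for commutative $\cI$-dgas of the form $A^{\cI}(X;\mathbb Z)$, weak equivalence in $\CchIZ$ is equivalent to $E_\infty$ quasi-isomorphism of the Bousfield--Kan homotopy colimits.

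For this last step the plan is to invoke the Quillen equivalence between $\CchIZ$ and $E_\infty$ dgas directly. After taking a cofibrant replacement $Q A^{\cI}(X;\mathbb Z) \to A^{\cI}(X;\mathbb Z)$ in $\CchIZ$, the homotopy colimit $(QA^{\cI}(X;\mathbb Z))_{h\cI}$ computes the image of $A^{\cI}(X;\mathbb Z)$ under the total derived functor of the Quillen equivalence, and the natural map $(QA^{\cI}(X;\mathbb Z))_{h\cI}\to A^{\cI}(X;\mathbb Z)_{h\cI}$ is an $E_\infty$ quasi-isomorphism since $(-)_{h\cI}$ preserves weak equivalences between (suitably cofibrant) $\cI$-diagrams. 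An equivalence of homotopy categories preserves and reflects isomorphism classes, so $A^{\cI}(X;\mathbb Z)$ and $A^{\cI}(Y;\mathbb Z)$ become isomorphic in $\mathrm{Ho}(\CchIZ)$ exactly when $A^{\cI}(X;\mathbb Z)_{h\cI}$ and $A^{\cI}(Y;\mathbb Z)_{h\cI}$ do so in the homotopy category of $E_\infty$ dgas, which is the desired conclusion.

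The main obstacle I expect is not conceptual but bookkeeping: one has to check that the total derived functor of $(-)_{h\cI}$ (with its natural Barratt--Eccles action) really implements the homotopy equivalence induced by the Quillen equivalence of \cite[\S 9]{Richter-S_algebraic}, which is constructed as a zigzag passing through several intermediate model categories (chain complexes of $\cI$-diagrams, various forget/rectify steps, and a comparison with $E_\infty$ dgas) rather than directly via homotopy colimits. Verifying that the Barratt--Eccles structure on $E_{h\cI}$ for a cofibrant commutative $\cI$-dga $E$ agrees up to $E_\infty$ quasi-isomorphism with the one obtained by chasing $E$ through this zigzag is the substantive content; once this identification is made, the theorem follows formally by combining it with the two ingredients in the first paragraph.
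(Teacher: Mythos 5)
Your overall strategy coincides with the paper's: combine Mandell's classification, the comparison of $A^{\cI}(-)_{h\cI}$ with singular cochains (Theorem~\ref{thm:E-infty-comparison}), and the statement that for commutative $\cI$-dgas, weak equivalence in $\CchIZ$ corresponds to quasi-isomorphism of Barratt--Eccles algebras after applying $(-)_{h\cI}$. The problem is that this last ingredient, which you defer as ``not conceptual but bookkeeping,'' is precisely the substantive step and is not established by what you write. The Quillen equivalences of Theorem~\ref{thm:chain-Quillen-equiv} (via \cite[\S 9]{Richter-S_algebraic}) send a commutative $\cI$-dga $M$ to $\colim_{\cI}M^{\mathrm{cof}}$, not to $M_{h\cI}$, and the $\cE$-action of Theorem~\ref{thm:E-infinity-action-on-hocolim-I} is constructed by hand on the simplicial replacement; there is no a priori reason why the derived functor of $(-)_{h\cI}$, as a functor to $\EchZ$, implements that derived equivalence. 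Your claim that $(QA^{\cI}(X;\mathbb Z))_{h\cI}$ ``computes the image under the total derived functor of the Quillen equivalence'' is exactly what has to be proved, and your proposal gives no argument for it. Note also that only one implication is formal: an $\cI$-equivalence induces a quasi-isomorphism on $(-)_{h\cI}$ by definition, so weak equivalence in $\CchIZ$ yields $E_\infty$ quasi-isomorphic homotopy colimits; the converse --- recovering an isomorphism in $\mathrm{Ho}(\CchIZ)$ from a zigzag of $E_\infty$ quasi-isomorphisms in $\EchZ$ whose intermediate objects need not come from $\cI$-dgas --- is where the unproved compatibility is indispensable.

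The paper closes this gap in Proposition~\ref{prop:hI-equivalence-hty-categories}: one shows that $(-)_{h\cI}\colon \CchIZ \to \EchZ$ itself induces an equivalence of homotopy categories, by means of the bar resolution $\overline{M}\to M$ with $\colim_{\cI}\overline{M}\iso M_{h\cI}$, producing a natural zigzag of $\cE$-algebra quasi-isomorphisms
\[ M_{h\cI} \leftarrow (M^{\mathrm{cof}})_{h\cI} \xrightarrow{\iso} \colim_{\cI}\overline{M^{\mathrm{cof}}} \to \colim_{\cI}M^{\mathrm{cof}}, \]
where the last map is shown to be a quasi-isomorphism by a cell-induction argument for cofibrant $\cI$-chain complexes, and where one checks that the $\cE$-structure on $\colim_{\cI}\overline{M}$ agrees with the one from Theorem~\ref{thm:E-infinity-action-on-hocolim-I}. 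You identified the correct missing statement, but without an argument of this kind (or an equivalent one) your proof is incomplete; supplying it would make your route essentially identical to the paper's. A further small point: Mandell's theorem is stated for algebras over a cofibrant $E_\infty$ operad, so one also needs the intermediate passage from $\EchZ$ to $\EcchZ$ via an operad map $\cE^{\mathrm{cof}}\to\cE$, as in the paper's proof.
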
 
\subsection{Outline of the construction}\label{subsec:outline_constr}
Our  chain complexes are homologically graded so that
cochains are concentrated in non-positive degrees. We model spaces
by simplicial sets and consider the singular complex of a topological space
if necessary.

The functor $A_{\mathrm{PL}}\colon \sset^{\op} \to \cdgaQ$ of
polynomial forms used in rational homotopy theory (see e.g.~\cite[\S
1]{Bousfield-G_rational}) motivates our definition of $A^{\cI}$. We
recall that $A_{\mathrm{PL}}$ arises by Kan extending the functor
$A_{\mathrm{PL},\bullet} \colon \Delta^{\op} \to \cdgaQ$ sending $[p]$
in $\Delta$ to the algebra of polynomial differential forms 
\begin{equation} \label{eq:sullivanapl}
  A_{\mathrm{PL},p} =
\Lambda(t_0,\dots,t_p;dt_0,\dots,dt_p)/(t_0+\dots+t_p=1,
dt_0+\dots+dt_p = 0).\end{equation}
Here $\Lambda$ is the free graded commutative algebra over
$\mathbb Q$, the generators $t_i$ have degree $0$, and the $dt_i$ have
degree $-1$ (in our homological grading). Setting $d(t_i) = dt_i$
extends to a differential that turns $A_{\mathrm{PL},q}$ into a
commutative dga, and addition of the $t_i$ and insertion of $0$ define
the simplicial structure of $A_{\mathrm{PL},\bullet}$.

The topological standard $p$-simplex can be written as
\[ \Delta^p = \{(t_0,\ldots, t_p), t_0+\dots+t_p=1, t_i \geq 0\} \]
and as
\[ \Delta^p = \{(x_0, x_1, \ldots, x_p, x_{p+1}), 0=x_0 \leq x_1 \leq \ldots
  \leq x_p \leq x_{p+1}=1\}. \] 
Setting $x_i =
t_0 + \ldots + t_{i-1}$ for $1 \leq i \leq p$ yields an isomorphism
\begin{equation} \label{eq:sullivanaplbar}
  A_{\mathrm{PL},p} \cong \Lambda(x_1,\ldots, x_p, dx_1,\ldots,
  dx_p) \cong \Lambda(x_1, dx_1) \otimes \ldots \otimes \Lambda(x_p, dx_p)
\end{equation}
and this simple but crucial trick gives rise to the following
reformulation:  let $\bC D^0$ be the free commutative $\mathbb Q$-dga on the chain
complex $D^0$ with $(D^0)_i=0$ if $i\neq 0,-1$ and
$d_0\colon (D^0)_0 \to (D^0)_{-1}$ being $\mathrm{id}_{\mathbb
  Q}$. Moreover, let $S^0$ in $\chQ$ be the monoidal unit, \ie, the
chain complex with a copy of $\mathbb Q$ concentrated in degree
$0$. Sending $1 \in (\bC D^0)_0$ to either $0$ or $1$ in $\mathbb Q$
defines two commutative $\bC D^0$-algebra structures on $S^0$ that we
denote by $S^0_0$ and $S^0_1$. We argue in Section~\ref{subsec:A_PL}
that the simplicial $\mathbb Q$-cdga $A_{\mathrm{PL},\bullet}$ is
isomorphic to the two sided bar construction
\begin{equation}\label{eq:A_PL}  
B_{\bullet}(S^0_0,\bC D^0, S^0_1) = \left([p]
  \mapsto S^0_0 \tensor (\bC D^0)^{\tensor p} \tensor
  S^0_1\right)  
\end{equation}
whose face maps are provided by the algebra structures on $S^0_1$ and
$S^0_0$ and the multiplication of $\bC D^0$, and whose degeneracy
maps are induced by the unit of $\bC D^0$.

While polynomial differential forms appear to have no obvious
counterpart in commutative $\cI$-dgas, their description in terms of
a two sided bar construction easily generalizes to commutative $\cI$-dgas
over an arbitrary commutative ground
ring~$k$. For this we consider the left adjoint
\[ \bC F_{\bld{1}}^{\cI} \colon \chk \to \CchIk, \qquad A\mapsto
  \left(\bld{m}\mapsto \textstyle \bigoplus_{s \geq 0} \left(\left(
        \textstyle \bigoplus_{\cI(\bld{1}^{\concat s},\bld{m})}
        A^{\tensor s}\right)/\, \Sigma_s\right)\right)\] to the
evaluation of a commutative $\cI$-dga at the object $\bld{1}$ in $\cI$
and recall that the unit $U^{\cI}$ in $\chIk$ is the constant
$\cI$-diagram on the unit $S^0$ in $\chk$. As above, we form $\bC
F_{\bld{1}}^{\cI} D^0$, observe that $U^{\cI}$ gives rise to two
commutative $\bC F_{\bld{1}}^{\cI} D^0$ algebras $U^{\cI}_0$ and
$U^{\cI}_1$, and define $A^{\cI}_{\bullet} \colon \Delta^{\op} \to
\CchIk$ to be the two sided bar construction
\[ B_{\bullet}(U^{\cI}_0,\bC F^{\cI}_{\bld{1}}(D^0) ,U^{\cI}_1) =
\left( [p] \mapsto U^{\cI}_0 \boxtimes (\bC
  F^{\cI}_{\bld{1}}(D^0))^{\boxtimes p} \boxtimes U^{\cI}_1 \right).\]
At this point it is central to work with strictly commutative
objects since the multiplication map of an $E_{\infty}$ object is
typically not an $E_{\infty}$ map. It is also important to use
$\bld{1}$ rather than $\bld{0}$ in the above left adjoint since this
ensures that $A^{\cI}_p(\bld{m})$ is contractible. This is related to
J. Smith's insight that one has to use \emph{positive} model
structures for commutative symmetric ring spectra. Using that $\boxtimes$ is the coproduct in commutative $\cI$-dgas, we get an isomorphism 
\[ A^{\cI}_p \iso  \bC F^{\cI}_{\bld{1}}(D^0 \oplus \dots \oplus
  D^0) \]
identifying the simplicial degree $p$ part of $A^{\cI}_\bullet$ with a
free commutative $\cI$-dga on $p$ generators. We also describe the
simplicial structure maps of $A^{\cI}_\bullet$ in terms of these
generators (see Section~\ref{subsec:I-version-polynomial}). 

Via Kan extension and restriction along the canonical functor
$\Delta^{\op} \to \sset^{\op}$, this $A^{\cI}_{\bullet}$ gives
rise to functors $A^{\cI} \colon \sset^{\op} \to \CchIk$ and
$\langle-\rangle_\cI \colon \CchIk^{\op} \to \sset$ (see Definition
\ref{def:aiandki}).
More explicitly, the
evaluation of $A^{\cI}(X)$ at $\cI$-degree $\bld{m}$ and chain complex
level $q$ is the $k$-module of simplicial set morphisms
$\sset(X,A^{\cI}_{\bullet}(\bld{m})_q)$. For every
$E$ in $\CchIk$, we set $\langle E\rangle_\cI = \CchIk(E, A^\cI_\bullet)$.
The functors $A^{\cI}$ and
$\langle -\rangle_\cI$ are contravariant right adjoint in the sense that there
are natural isomorphisms $\CchIk(E, A^{\cI}(X))\iso \sset(X,
\langle E \rangle_\cI)$. They are integral analogues of the functor of
polynomial forms and of the Sullivan realization functor. 

\subsection{Homotopical analysis of \texorpdfstring{$A^{\cI}$}{A-I}}
We equip simplicial sets with the standard model structure and the
category of commutative $\cI$-dgas $\CchIk$ with the \emph{descending
$\cI$-model structure} making it Quillen equivalent to $E_{\infty}$
dgas (see Section~\ref{sec:model_str} for details). 
\begin{theorem}\label{thm:AI-KI-adjunctions-intro}
  Both $A^{\cI}$ and $\langle -\rangle_\cI$ send cofibrations to fibrations and
  acyclic cofibrations to acyclic fibrations.  They induce functors 
  on the corresponding homotopy categories
  $\mathbb R\langle - \rangle_\cI \colon \mathrm{Ho}(\CchIk)^{\op}\to
   \mathrm{Ho}(\sset)$
   and
   $\mathbb RA^{\cI}\colon \mathrm{Ho}(\sset)^{\op}\to
   \mathrm{Ho}(\CchIk)$ that are related by a natural isomorphism
   \[\mathrm{Ho}(\CchIk)(E,\mathbb RA^{\cI}(X)) \iso
     \mathrm{Ho}(\sset)(X,\mathbb R\langle E\rangle_\cI).\]  
 \end{theorem}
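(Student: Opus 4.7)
The plan is to establish that $(A^{\cI}, K^{\cI})$ forms a contravariant Quillen adjunction between $\sset$ and $\CchIk$, and to deduce the homotopy-category statement via Ken Brown's lemma. The starting point is the natural isomorphism $\CchIk(E, A^{\cI}(X))\iso \sset(X, K^{\cI}(E))$: transposing lifting problems shows that $A^{\cI}(f)$ has the right lifting property against $g$ if and only if $K^{\cI}(g)$ has the right lifting property against $f$. Consequently the four conditions in the statement collapse into two independent claims, namely that $A^{\cI}$ sends cofibrations to fibrations, and that $A^{\cI}$ sends acyclic cofibrations to acyclic fibrations; the analogous claims for $K^{\cI}$ follow automatically.

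To verify these two claims, I would use that fibrations and acyclic fibrations in the positive $\cI$-model structure on $\CchIk$ (from \cite[\S 9]{Richter-S_algebraic}) are detected by the right lifting property against a set of generating (acyclic) cofibrations. Dualizing, the cofibration-to-fibration property of $A^{\cI}$ reduces to an extension property of the simplicial commutative $\cI$-dga $A^{\cI}_{\bullet}$: the pushout--product of a boundary inclusion $\partial\Delta^n \hookrightarrow \Delta^n$ (or a horn inclusion, in the acyclic case) with a generating (acyclic) cofibration of $\CchIk$ must remain an (acyclic) cofibration after pairing with $A^{\cI}_{\bullet}$. The main obstacle I expect is proving this extension property. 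My plan is to exploit the two-sided bar construction description $A^{\cI}_{\bullet} = B_{\bullet}(U^{\cI}_0, \bC F^{\cI}_{\bld{1}}(D^0), U^{\cI}_1)$: because $D^0$ is cofibrant in $\chk$ and $\bC F^{\cI}_{\bld{1}}$ is a left Quillen functor into the positive $\cI$-model structure, $\bC F^{\cI}_{\bld{1}}(D^0)$ is a cofibrant commutative $\cI$-dga, and the simplicial direction of the bar construction produces well-behaved latching maps. The positivity condition (use of $\bld{1}$ rather than $\bld{0}$) guarantees that $A^{\cI}_{p}$ is contractible in positive $\cI$-degree, supplying the acyclicity needed for the trivial-fibration case. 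Careful bookkeeping simultaneously along the Reedy, $\cI$, and chain-complex directions should then yield the desired pushout-product compatibility.

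Finally, once the (acyclic) cofibration to (acyclic) fibration properties are in hand, Ken Brown's lemma applied to the opposite functors shows that $A^{\cI}$ preserves all weak equivalences in $\sset$ (every simplicial set is cofibrant) and that $K^{\cI}$ preserves weak equivalences between cofibrant commutative $\cI$-dgas. This yields the derived functors $RA^{\cI}$ and $RK^{\cI}$ on the homotopy categories. Since $A^{\cI}(X)$ is automatically fibrant (apply the cofibration-to-fibration property to $\emptyset \hookrightarrow X$) and $K^{\cI}(QE)$ is fibrant for any cofibrant replacement $QE$ of $E$ in $\CchIk$, the point-set adjunction isomorphism descends directly to the claimed natural isomorphism on homotopy categories by the standard argument for (contravariant) Quillen adjunctions.
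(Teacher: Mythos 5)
Your overall architecture is the same as the paper's: use the adjunction isomorphism to transpose lifting problems so that the statements for $K^{\cI}$ follow from those for $A^{\cI}$, and then obtain the derived adjunction by the standard formalism for (contravariant) Quillen adjunctions; that part of your argument is fine. The genuine gap is in the middle step, where you claim that the contractibility coming from positivity ``supplies the acyclicity needed for the trivial-fibration case'' and that Reedy-style bookkeeping with the bar construction finishes the job. This conflates two different statements. The simplicial-direction contractibility of $A^{\cI}_{\bullet,q}(\bld{m})$ for positive $\bld{m}$ (Lemma~\ref{lem:AI-contractible}, proved by extra degeneracies) is exactly what yields the \emph{fibration} half, via the lifting argument of Lemma~\ref{lem:latching-map-pos-I-fibration}. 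It does not by itself give the acyclic half: for that you must show that $A^{\cI}$ sends acyclic cofibrations (equivalently, weak homotopy equivalences, since every simplicial set is cofibrant) to positive level equivalences. This is a homotopy-invariance statement about the prolonged functor $X\mapsto A^{\cI}(X)$, and it is the hardest part of the theorem; the paper proves it by comparing $A^{\cI}$ with the ordinary cochains through the zig-zag $A^{\cI}\to B^{\cI}\leftarrow C^{\cI}$ and a cell-induction/cogluing argument (Theorem~\ref{thm:chain-complex-level-comparison}, Proposition~\ref{prop:equivalence-on-extensions}, Corollary~\ref{cor:hty-invariance-AI}), using the levelwise acyclicity of $\bC F^{\cI}_{\bld{1}}(D^0)$ (Lemma~\ref{eq:free-comm-I-dga-on-cofibrant-acyclic}). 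Your sketch never identifies an argument for this, and neither Reedy cofibrancy of the bar construction nor cofibrancy of $\bC F^{\cI}_{\bld{1}}(D^0)$ produces it.

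A second, related problem is your plan to verify fibrations in $\CchIk$ by checking the right lifting property against ``a set of generating (acyclic) cofibrations.'' The positive $\cI$-model structure is a left Bousfield localization, so its generating acyclic cofibrations are not explicit, and the proposed check is not feasible as stated; moreover the ``pushout--product of $\partial\Delta^n\hookrightarrow\Delta^n$ with a generating cofibration of $\CchIk$ after pairing with $A^{\cI}_{\bullet}$'' does not typecheck, since no simplicial enrichment compatible with the model structures has been set up. What one can check explicitly is that $A^{\cI}$ of a cofibration is a positive \emph{level} fibration; to upgrade this to a positive $\cI$-fibration one needs the source and target to be positive $\cI$-fibrant and then Corollary~\ref{cor:equivalence-fibrations-in-localization}. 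But the positive $\cI$-fibrancy of $A^{\cI}(X)$ (all maps $\bld{m}\to\bld{n}$ in $\cI_+$ inducing quasi-isomorphisms) is again obtained from the comparison Theorem~\ref{thm:chain-complex-level-comparison}, so this input cannot be bypassed either. In short: the fibration half and the formal conclusion are in order, but the weak-equivalence preservation and $\cI$-fibrancy, which carry the real content of the theorem, are missing from your proposal.
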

Here, $\mathbb R(-)$ indicates that we right-derive the functors on $(\CchIk)^{\op}$ and $\sset^{\op}$. So no fibrant replacement is necessary before applying $A^{\cI}$ since all simplicial sets are cofibrant, while a cofibrant replacement in $\CchIk$ is necessary to derive $\langle - \rangle_{\cI}$.

A similar result for $A_{\mathrm{PL}}\colon \sset^{\op} \to \cdgaQ$
has been established by Bousfield--Gugen\-heim~\cite[\S
8]{Bousfield-G_rational}.  Mandell~\cite[\S
4]{Mandell_cochain-multiplications} constructed an analogous adjunction
between simplicial sets and $E_{\infty}$ dgas using the $E_{\infty}$ structure
on cochains as input. The functor of homotopy categories $\mathbb
RA^{\cI}$ fails to be full for the same reason as its counterpart
studied by Mandell (see the discussion after~\cite[Theorem
0.2]{Mandell_cochains-homotopy-type}). One of the referees of this
paper raised the interesting question whether there exists a
modification of our diagrammatic approach that remedies this
shortcoming.

Since all simplicial sets are cofibrant, the statement of
Theorem~\ref{thm:AI-KI-adjunctions-intro} implies that each
$A^{\cI}(X)$ is descending $\cI$-fibrant. Writing $\cI_{+}$ for the
full subcategory of $\cI$ on objects $\bld{m}$ with $|\bld{m}|\geq 1$,
this means that for each morphism $\bld{m} \to \bld{n}$ in $\cI_+$ and
each $q \geq -|\bld{m}|$, the induced map
$H_q(A^{\cI}(X)(\bld{m})) \to H_q(A^{\cI}(X)(\bld{n}))$ is an
isomorphism. Hence each chain complex $A^{\cI}(X)(\bld{m})$ with
$\bld{m}$ in $\cI_+$ captures the cohomology groups of $X$ up to
degree $|\bld{m}|$. This is the maximal information to be expected
from $A^{\cI}(X)(\bld{m})$ as it is a chain complex concentrated
in degrees between $0$ and $-|\bld{m}|$. Since the descending $\cI$-model
structure is the left Bousfield localization of a descending level
model structure, it also follows that weak homotopy equivalences
$X \to Y$ induce isomorphisms
$H_q(A^{\cI}(Y)(\bld{m})) \to H_q(A^{\cI}(X)(\bld{m}))$ if $\bld{m}$
is in~$\cI_+$ and $q \geq -|\bld{m}|$.

Analogous to the corresponding statement about $A_{\mathrm{PL}}$, the
proof of the theorem is based on the observation that the simplicial
sets $A^{\cI}_{\bullet}(\bld{m})_q$ are contractible for a fixed $\bld{m}$ in
$\cI_+$ and a fixed chain level $q$ with $0 \geq q \geq -|\bld{m}|$.

\begin{remark}
  After a first version of the present manuscript was made available,
  the authors learned from Dan Petersen that he recently found another
  construction of a commutative $\cI$-dga that models the cochain
  algebra of a space~\cite{Petersen_configuration}. His approach
  applies to locally contractible topological spaces, uses sheaf
  cohomology, and has applications in the study of configuration
  spaces.
\end{remark}

\subsection{Notations and conventions} Throughout the paper, $k$
denotes a commutative ring with unit, and $\chk$ denotes the category
of unbounded homologically graded chain complexes of $k$-modules. For
$q\in \mathbb Z$, we as usual write $S^q$ for the chain complex with
$k$ concentrated in degree $q$, and $D^q$ for the chain complex with
$(D^q)_i=k$ if $i\in\{q,q-1\}$, with $(D^q)_i=0$ for all other $i$,
and with $d_q = \mathrm{id}_k$.

\subsection{Organization} In Section~2 we study homotopy colimits of
commutative $\cI$-dgas. Section~3 provides the construction of the
functor $A^{\cI}$. We review model structures on $\cI$-chain complexes
and commutative $\cI$-dgas in Section~4. In Section~5 we establish the
homotopical properties of $A^{\cI}$, prove a comparison to the usual
cochains disregarding multiplicative structures, and prove
Theorem~\ref{thm:AI-KI-adjunctions-intro}. In the final Section~6, we
prove the $E_{\infty}$ comparison from
Theorem~\ref{thm:E-infty-comparison-intro} as
Theorem~\ref{thm:E-infty-comparison} and explain how to derive
Theorem~\ref{thm:mandell-intro}.

\subsection{Acknowledgments} The authors thank the referees for useful
comments on an earlier version of this paper and Josefien Kuijper for
bringing an error in a previous version of
Lemma~\ref{lem:AI-contractible} to our attention.
\section{Homotopy colimits of \texorpdfstring{$\cI$}{I}-chain complexes}
Let $\cI$ be the category with objects the finite sets
$\bld{m}=\{1,\dots,m\}$ for $m\geq 0$ and with morphisms the injective
maps. In this section we study multiplicative properties of the
homotopy colimit functor for $\cI$-diagrams of chain complexes.

\begin{definition} An \emph{$\cI$-chain complex} is a functor $\cI \to
  \chk$, and $\chIk$ denotes the resulting functor category.
\end{definition}

For each $\bld{m}$ in $\cI$ there is an adjunction
$F^{\cI}_{\bld{m}}\colon \chk \rightleftarrows \chIk  \colon
\mathrm{Ev}_{\bld{m}}$ with right adjoint the evaluation functor
$\mathrm{Ev}_{\bld{m}}(P) = P(\bld{m})$ and left adjoint  
\begin{equation}\label{eq:free-I-chain-complex}
  F^{\cI}_{\bld{m}}\colon \chk \to \chIk, \quad A \mapsto 
\left(\bld{n} \mapsto \textstyle\bigoplus_{\cI(\bld{m},\bld{n})} A
\right)\ .
\end{equation}
The functor $F^{\cI}_{\bld{0}}$ is isomorphic to the
constant functor since $\bld{0}$ is initial in $\cI$. 
\subsection{Homotopy colimits}
Our next aim is to define Bousfield--Kan style homotopy colimits for
$\cI$-diagrams of chain complexes. For the subsequent multiplicative
analysis, we fix notation and conventions about bicomplexes.
\begin{definition} Let $\chk(\chk)$ be the category of chain complexes
  in $\chk$. Its objects are $\bZ \times \bZ$-graded $k$-modules
  $(Y_{p,q})_{p,q \in \bZ}$ with $k$-linear \emph{horizontal 
    differentials}, $d_h \colon Y_{p,q} \ra Y_{p-1,q}$, and $k$-linear
  \emph{vertical differentials}, $d_v \colon Y_{p,q} \ra Y_{p,q-1}$,
  such that
  \[ d_h \circ d_h = 0 = d_v \circ d_v \text{ and } d_v \circ d_h =
    d_h \circ d_v.\]
  A morphism $g \colon Y \ra Z$ in $\chk(\chk)$ is a
  family $(g_{p,q} \colon Y_{p,q} \ra Z_{p,q})_{p,q \in \bZ \times
    \bZ}$ of $k$-linear maps that commute with the horizontal and
  vertical differentials, \ie,
  \[ d_h \circ g_{p,q} = g_{p-1,q} \circ d_h \text{ and } d_v \circ
  g_{p,q} = g_{p,q-1} \circ d_v\] for all $p,q \in \bZ$.
\end{definition}
Since we require horizontal and vertical differentials to commute, an
additional sign is needed to form the total complex:
\begin{definition}
  Let $Y$ be an object in $\chk(\chk)$. Its \emph{associated total
    complex} $\Tot(Y)$ is the chain complex with $\Tot(Y)_n =
  \bigoplus_{p+q=n} Y_{p,q}$ in chain degree $n \in \bZ$ and with
  differential $d_{\Tot}(y) = d_h(y) + (-1)^pd_v(y)$ for every
  homogeneous $y \in Y_{p,q}$.
\end{definition}

Let $\schk$ be the category of simplicial objects in $\chk$. 
\begin{definition} \label{def:moorechains}
For $A \in \schk$ we denote by $C_*(A)$ the chain complex in chain
complexes with $(C_*(A))_{p,q} = A_{p,q}$. We define the horizontal
differential on $C_*(A)$,  $d_h\colon A_{p,q} \ra A_{p-1,q}$, as 
\[ d_h = \sum_{i=0}^p (-1)^i d_i\] 
where the $d_i$ are the simplicial face maps of $A$. The vertical
differential on $C_*(A)$ is given by the differential $d^A$ on $A$. 

As the $d_i$'s commute with $d^A$, this gives indeed a chain
complex in chain complexes whose horizontal part is concentrated in
non-negative degrees. 
\end{definition}
\begin{construction}\label{constr:hocolim_I}
  Let $P\colon \cI \to \chk$ be an $\cI$-chain complex.  The
  \emph{simplicial replacement} of $P$ is the
  simplicial chain complex $\srep(P) \colon \Delta^{\op} \to
  \mathrm{Ch_k}$ given in simplicial degree $[p]$ by
  \[ \srep(P)[p] = \bigoplus_{(\bld{n_0} \xleftarrow{\alpha_1}
    \dots \xleftarrow{\alpha_p} \bld{n_p}) \in N(\cI)_p} P(\bld{n_p}).\] 
The last face map sends the copy of $P(\bld{n_p})$ indexed by
  $(\alpha_1,\dots, \alpha_p)$ via $P(\alpha_p)$ to the copy of
  $P(\bld{n_{p-1}})$ indexed by $(\alpha_1,\dots,\alpha_{p-1})$. The
  other face and degeneracy maps are induced by the identity on
  $P(\bld{n_p})$ and corresponding simplicial structure maps of the
  nerve $\cN(\cI)$ of $\cI$.  

  The homotopy colimit functor $(-)_{h\cI}\colon \chIk \to \chk$ is
  defined by
  \[ P_{h\cI} = \mathrm{Tot}\; C_*(\srep(P)).\]
\end{construction}

A bicomplex spectral sequence argument shows that
$P_{h\cI} \to Q_{h\cI}$ is a quasi-isomorphism if each
$P(\bld{m}) \to Q(\bld{m})$ is a quasi-isomorphism.  There is a
canonical map $P_{h\cI} \to \colim_{\cI}P$, and one can show by cell
induction that it is a quasi-isomorphism if $P$ is cofibrant in the
projective level model structure on $\chIk$. Together this shows that
$P_{h\cI}$ is a model for the homotopy colimit of $P$. A more elaborate
argument that shows that $P_{h\cI}$ is a corrected homotopy colimit
can be found in \cite{rg}. A version of the above homotopy colimit for
functors with values in modules can be found in \cite[Definition
3.13]{davislueck}.

\subsection{Commutative \texorpdfstring{$\cI$}{I}-dgas} 
The ordered concatenation of ordered sets $\bld{m}\concat\bld{n} =
\bld{m+n}$ equips $\cI$ with a symmetric strict monoidal
structure that has $\bld{0}$ as a strict unit and the block
permutations as symmetry isomorphisms. If $P,Q \colon \cI\to \chk$ are
$\cI$-chain complexes, then the left Kan extension of
\[ \cI \times \cI \xrightarrow{P \times Q} \chk \times \chk
\xrightarrow{\tensor} \chk \] 
along $\concat \colon \cI \times \cI \to
\cI$ provides an $\cI$-chain complex $P\boxtimes Q$. This
defines a symmetric monoidal product $\boxtimes$ on $\chIk$, the Day
convolution product,  with unit
the constant $\cI$-diagram $U^{\cI}= F_0^{\cI}(S^0)$. 
\begin{definition}
  A \emph{commutative $\cI$-dga} is a commutative monoid in
  $(\chIk, \boxtimes, U^{\cI})$, \ie, a lax symmetric monoidal functor
  $(\cI, \concat, \bld{0}) \to (\chk, \tensor, S^0)$. The
  resulting category of commutative $\cI$-dgas is denoted by $\CchIk$.
\end{definition}

We write $\bC\colon \chIk \rightleftarrows \CchIk \colon U$ for
the adjunction with right adjoint the forgetful functor and left adjoint
the free functor $\bC$ given by
\begin{equation}\label{eq:free-commutative} \bC(P) = \textstyle
  \bigoplus_{s \geq 0}P^{\boxtimes s}/ \Sigma_s. 
\end{equation}
The definition of $\boxtimes$ as a left Kan extension implies
the existence of a natural isomorphism $F_{\bld{n_1}}^{\cI}(A^1)
\boxtimes F_{\bld{n_2}}^{\cI}(A^2) \iso F_{\bld{n_1\concat
    n_2}}^{\cI}(A^1\tensor A^2)$.  This shows that in the case $P =
F_{\bld{1}}^{\cI}(A)$, we have an isomorphism
$F_{\bld{1}}^{\cI}(A)^{\boxtimes s} \iso F_{\bld{1}^{\concat
    s}}^{\cI}(A^{\tensor s})$ of $\Sigma_s$-equivariant objects where
$\Sigma_s$ acts on the target by permuting both the $\tensor$-powers
of $A$ and the index set of the sum. The commutative $\cI$-dga
$\bC(F_{\bld{1}}^{\cI}(A))$ will be of particular importance for us,
and we note that the above implies  
\begin{equation}\label{eq:free-comm-I-dga-explicit}
\bC(F_{\bld{1}}^{\cI}(A))(\bld{m}) \iso
 \textstyle \bigoplus_{s \geq 0} \left(\left(  \textstyle
     \bigoplus_{\cI(\bld{1}^{\concat s},\bld{m})} A^{\tensor
       s}\right)/\, \Sigma_s\right). 
\end{equation}

\subsection{Homotopy colimits of commutative
  \texorpdfstring{$\cI$}{I}-dgas} 
We will now construct an operad
action on the homotopy colimit of a commutative $\cI$-dga. Our
construction involves a symmetric monoidal structure on simplicial
chain complexes:
\begin{definition}
Let $A$ and $B$ be two simplicial chain complexes. Their tensor
product $A \hot B$ is the simplicial chain complex with 
\[\bigoplus_{\ell + m = n} A_{p,\ell} \otimes B_{p, m}\] 
in simplicial degree $p$ and chain degree $n$. The simplicial
structure maps act coordinatewise and the differential $d^{\hot}$ is 
\[ d^{\hot}(a \otimes b) = d(a) \otimes b + (-1)^\ell a \otimes d(b)\]
for $a \otimes b \in A_{p,\ell} \otimes B_{p, m}$. 
The symmetry isomorphism 
$c \colon A \hot B \ra B \hot A$ sends a homogeneous element $a
\otimes b$ as above to $(-1)^{\ell \cdot 
  m} b \otimes a$. 
\end{definition}

We denote by $\widetilde{\Sigma}_s$ the translation category of the
symmetric group $\Sigma_s$. Its objects are elements $\sigma \in
\Sigma_s$ and $\tau \in \Sigma_s$ is the unique morphism from $\sigma$
to $\tau \circ \sigma$ in $\widetilde{\Sigma}_s$. Since there is exactly
one morphism between each pair of objects, we get a functor
\begin{equation}\label{eq:be-in-cat} \widetilde{\Sigma}_s \times \widetilde{\Sigma}_{j_1}\times \dots \times  \widetilde{\Sigma}_{j_s} \to \widetilde{\Sigma}_{j_1+ \dots + j_s}
\end{equation}
by specifying that $(\sigma;\tau_1,\dots, \tau_s)$ is sent to
the composite \[ (\tau_{\sigma^{-1}(1)}\concat \dots \concat
  \tau_{\sigma^{-1}(s)}) \circ \sigma(j_1,\dots,j_s)\]
of the block permutation
$\sigma(j_1,\dots,j_s) \colon \bld{j_1}\concat \dots \concat \bld{j_s} \to \bld{j_{\sigma^{-1}(1)}} \concat \dots \concat \bld{j_{\sigma^{-1}(s)}}$ induced by $\sigma$ and the concatenation of the $\tau_{\sigma^{-1}(j)}$ (see~\cite[\S 4]{maypermcat} and \cite[Correction~34 on p. 490]{CLM_homology-iterated-loop-spaces}).

The action~\eqref{eq:be-in-cat} is associative, unital, and
symmetric. It turns the collection of categories
$(\widetilde{\Sigma}_n)_{n\geq 0}$ into an operad $\widetilde{\Sigma}$
in the category $\cat$ of small categories. For the next definition,
we use that the nerve functor $\cN\colon \cat \to \sset$ and the
$k$-linearization $k\{-\}\colon \sset \to \sMod_k$ are strong
symmetric monoidal and that the associated chain complex functor
$C_*\colon \sMod_k \to \chk$ is lax symmetric monoidal (compare
Proposition~\ref{prop:shuffle} below).
\begin{definition}\label{def:Barratt-Eccles-operad}
  The Barratt--Eccles operad is the $E_{\infty}$ operad $\cE$ in $\chk$
  with $\cE_n = C_*(k\{\cN(\widetilde{\Sigma}_n)\})$ and operad
  structure induced by the functor~\eqref{eq:be-in-cat}.
\end{definition}
The commutativity operad $\cC$ in $\chk$ is the operad with $\cC_n =
S^0$ concentrated in chain complex level $0$. The 
operad $\cE$ admits a canonical operad map $\cE \to \cC$ which is a
quasi-isomorphism in each level. Moreover, $\cE_n$ is a free
$k[\Sigma_n]$-module for each~$n$. Thus $\cE$ is an $E_{\infty}$
operad in $\chk$ in the terminology
of~\cite[Definition~4.1]{Mandell_cochain-multiplications}. 

Applying the nerve to $\widetilde{\Sigma}$ defines
an operad in $\sset$ that is more commonly referred to as the
Barratt--Eccles operad. It is well known that the latter operad acts
on the nerve of a permutative category \cite[Theorem
4.9]{maypermcat}. The next lemma recalls the underlying action of
$\widetilde{\Sigma}$ for the permutative category $\cI$.  

\begin{lemma}
The operad $\widetilde{\Sigma}$ in $\cat$
acts on $\cI$. On objects $\sigma$ in $\widetilde{\Sigma}_n$ and
$\bld{m_i}$ in $\cI$, the action is given by  
$(\sigma; \bld{m_1}, \ldots, \bld{m_n}) \mapsto
\bld{m_{\sigma^{-1}(1)}} \sqcup \ldots \sqcup
\bld{m_{\sigma^{-1}(n)}}.$\end{lemma}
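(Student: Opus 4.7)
The approach is to recognize that $\cI$ with ordered concatenation $\concat$, unit $\bld{0}$, and block-permutation symmetry is a permutative category, so this lemma is the underlying categorical content (on objects and morphisms) of the classical action of the Barratt--Eccles operad on a permutative category~\cite{maypermcat}. The task therefore reduces to writing the functor $\widetilde{\Sigma}_n \times \cI^n \to \cI$ down explicitly and checking the operad axioms.

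The definition on objects is given. For morphisms I would proceed as follows. A morphism in $\widetilde{\Sigma}_n \times \cI^n$ from $(\sigma; \bld{m_1}, \ldots, \bld{m_n})$ to $(\tau\sigma; \bld{m'_1}, \ldots, \bld{m'_n})$ is a tuple $(\tau; \alpha_1, \ldots, \alpha_n)$ with $\alpha_i\colon \bld{m_i} \to \bld{m'_i}$ injective. I would send it to the composite
\[
\bld{m_{\sigma^{-1}(1)}} \concat \cdots \concat \bld{m_{\sigma^{-1}(n)}}
\xrightarrow{\alpha_{\sigma^{-1}(1)} \concat \cdots \concat \alpha_{\sigma^{-1}(n)}}
\bld{m'_{\sigma^{-1}(1)}} \concat \cdots \concat \bld{m'_{\sigma^{-1}(n)}}
\xrightarrow{c_\tau}
\bld{m'_{(\tau\sigma)^{-1}(1)}} \concat \cdots \concat \bld{m'_{(\tau\sigma)^{-1}(n)}}
\]
where $c_\tau$ is the block permutation of $n$ ordered blocks induced by $\tau$, built from the symmetry isomorphisms of $(\cI,\concat)$. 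Functoriality is almost automatic since $\widetilde{\Sigma}_n$ has a unique morphism between any pair of objects, so the only content is that the assignment respects source and target, which in turn reduces to the naturality of the symmetry of $\cI$.

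Finally I would verify the operad axioms. The unit axiom and the $\Sigma_n$-equivariance are immediate from the formula. The central axiom is associativity, which requires checking that composing the action through the operadic structure~\eqref{eq:be-in-cat} of $\widetilde{\Sigma}$, \ie via $(\sigma;\tau_1,\dots,\tau_s) \mapsto \tau_{\sigma^{-1}(1)}\concat\cdots\concat\tau_{\sigma^{-1}(s)}$, agrees with first acting by the $\tau_i$'s on the individual groups of inputs and then acting by $\sigma$ on the resulting blocks. The main obstacle is bookkeeping: one must track how block permutations combine with the ordered concatenations under reindexing. Since $\cat$ carries no signs and the symmetry of $(\cI,\concat)$ is strict, no coherence beyond the permutative axioms needs to be invoked, and everything reduces to checking that block permutations in $\cI$ compose as the operadic formula dictates.
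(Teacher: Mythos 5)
Your proposal is correct and follows essentially the same route as the paper, which simply observes that $(\cI,\concat,\bld{0})$ is a permutative category and cites May's Lemmas 4.3 and 4.4 from \cite{maypermcat}; your explicit formula on morphisms (concatenate the $\alpha_{\sigma^{-1}(i)}$, then apply the block permutation for $\tau$) is exactly that construction. One small wording point: it is the monoidal structure (associativity and unit) of $\cI$ that is strict, while the symmetry isomorphisms are the nontrivial block permutations --- which is precisely why functoriality in $\widetilde{\Sigma}_n$ needs them, as the paper notes.
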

\begin{proof}
  This is a special case of~\cite[Lemmas 4.3 and
  4.4]{maypermcat}. Functoriality in morphisms of 
  $\widetilde{\Sigma}_n$ uses the symmetry isomorphism of $\cI$ while
  the functoriality in $\cI$ is the evident one. 
\end{proof}
The next result is our main motivation for considering the Barratt--Eccles
operad. It is analogous to the result about $\cI$-diagrams in spaces 
established in~\cite[Proposition 6.5]{Schlichtkrull_Thom-symmetric}.
\begin{theorem}\label{thm:E-infinity-action-on-hocolim-I}
For every commutative $\cI$-dga $E$, the chain complex
$E_{h\cI}$ has a natural action of the Barratt--Eccles operad $\cE$.
\end{theorem}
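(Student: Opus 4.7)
The plan is to build, for each $n\geq 0$, a natural map $\cE_n \tensor E_{h\cI}^{\tensor n} \to E_{h\cI}$ in two stages: first produce an action on the simplicial replacement $\srep(E)$ before totalization, and then transport it through $\Tot\, C_*$ using the Eilenberg--Zilber shuffle. This parallels Schlichtkrull's treatment of the space-level analogue cited in the statement, now in the chain-complex setting.

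\textbf{Simplicial-level action.} I would first construct a natural map of simplicial chain complexes
\[
\mu_n \colon k\{\cN(\widetilde{\Sigma}_n)\} \hot \srep(E)^{\hot n} \longrightarrow \srep(E).
\]
In simplicial degree $p$, summands on the left are indexed by a $p$-simplex $(\sigma_0,\dots,\sigma_p)\in\cN(\widetilde{\Sigma}_n)_p$ together with $n$ $p$-simplices $\vec{\alpha}^{\,i}=(\bld{n_0^i}\xleftarrow{\alpha_1^i}\cdots\xleftarrow{\alpha_p^i}\bld{n_p^i})$ in $\cN(\cI)_p$, and they carry an element $e_1\tensor\cdots\tensor e_n\in E(\bld{n_p^1})\tensor\cdots\tensor E(\bld{n_p^n})$. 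Using the $\widetilde{\Sigma}$-action on $\cI$ from the preceding lemma, one assembles the $\vec{\alpha}^{\,i}$ into the single $p$-simplex
\[
\bld{n_0^{\sigma_0^{-1}(1)}}\concat\cdots\concat\bld{n_0^{\sigma_0^{-1}(n)}} \longleftarrow \cdots \longleftarrow \bld{n_p^{\sigma_p^{-1}(1)}}\concat\cdots\concat\bld{n_p^{\sigma_p^{-1}(n)}}
\]
of $\cN(\cI)_p$, where each arrow is built from the concatenation of the relevant $\alpha_j^i$ precomposed with the block-permutation isomorphism induced by $\sigma_j\sigma_{j-1}^{-1}$. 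The lax symmetric monoidal structure on $E$ then supplies a multiplication landing in $E$ of the concatenated target, and hence in the corresponding summand of $\srep(E)[p]$.

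\textbf{Passage to $\Tot\, C_*$.} Applying $C_*$ and iterating the Eilenberg--Zilber shuffle, which makes $C_*\colon \sMod_k \to \chk$ lax symmetric monoidal, produces
\[
\cE_n \tensor C_*(\srep(E))^{\tensor n} \to C_*(k\{\cN(\widetilde{\Sigma}_n)\}\hot \srep(E)^{\hot n}) \xrightarrow{C_*(\mu_n)} C_*(\srep(E)).
\]
Applying $\Tot\colon \chk(\chk)\to \chk$, whose Koszul signs assemble it into a symmetric monoidal functor, then gives the desired map $\cE_n\tensor E_{h\cI}^{\tensor n} \to E_{h\cI}$. Naturality in $E$ is manifest throughout.

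\textbf{Operad axioms and main obstacle.} The unit in $\widetilde{\Sigma}_1$ induces the identity on $\srep(E)$. Equivariance under $\Sigma_n$ follows from the symmetry of $\concat$ on $\cI$ and of $E$ as a lax symmetric monoidal functor, since the $\Sigma_n$-action on $\widetilde{\Sigma}_n$ is transported by $\mu_n$ into those symmetries. Operadic composition reduces via~\eqref{eq:be-in-cat} to associativity of $\concat$ on $\cI$, of the multiplication of $E$, and of the shuffle and $\Tot$. The principal technical obstacle is the first step: verifying by direct bookkeeping that $\mu_n$ is genuinely a map of simplicial chain complexes, i.e.\ that all face and degeneracy identities hold once the $\widetilde{\Sigma}$-action on $\cI$ and the commutative multiplication of $E$ are intertwined through all $\sigma_j$ simultaneously. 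Once $\mu_n$ is in place, everything else is formal manipulation with the lax symmetric monoidality of $C_*$ and $\Tot$.
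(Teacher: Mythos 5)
Your proposal matches the paper's proof of Theorem~\ref{thm:E-infinity-action-on-hocolim-I}: the action is built first on the simplicial replacement, using the $\widetilde{\Sigma}$-action on $\cI$ for the indexing simplices of $\cN(\cI)$ and the commutative multiplication of $E$ on the elements, and then transported through the composite $\Tot C_*$ via Propositions~\ref{prop:shuffle} and~\ref{prop:Totsymmon}, exactly as in the paper. The step you flag as the main obstacle (checking that $\mu_n$ is a map of simplicial chain complexes, with commutativity of $E$ ensuring well-definedness on the $\Sigma$-twisted products) is precisely the point the paper settles by the same appeal to commutativity, so your route is the paper's route.
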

\begin{proof}
  We can view the simplicial $k$-module
  $k\{\cN(\widetilde{\Sigma}_n)\}$ as a simplicial chain complex
  concentrated in chain degree $0$. The operad structure of
  $\widetilde{\Sigma}$ turns these simplicial $k$-modules into
  an operad in $\sMod_k$ and in $s\chk$. We construct an action
  \[ k\{\cN(\widetilde{\Sigma}_s)\} \hot \srep(E)^{\hot s}\to
    \srep(E). \]
  It is enough to specify the action of a 
  $q$-simplex $\sigma_0 \xleftarrow{\tau_1} \sigma_1 \leftarrow \dots \xleftarrow{\tau_q} \sigma_q$ in
  $\cN(\widetilde{\Sigma}_s)$ on a collection of elements
  $(\alpha^i_1, \dots, \alpha^i_q; x^i)$ in $\srep(E)[q]_{p_i}$ where
  $\alpha^i_j \colon \bld{n^i_j} \to \bld{n^i_{j-1}}$ is a map in
  $\cI$ and $x^i$ is an element in $E(\bld{n^i_q})_{p_i}$. On the
  indices $(\alpha^i_1, \dots, \alpha^i_q)$ for the sums in the
  simplicial replacement, we use the action of $(\tau_1,\dots,
  \tau_q)$ provided by the previous lemma. As element in $
  E(\bld{n_q^{\sigma_q^{-1}(1)}} \concat\dots \concat
  \bld{n_q^{\sigma_q^{-1}(s)}})_{p_1+\dots+ p_s}$ we take the product
  $x^{\sigma_q^{-1}(1)}\cdots x^{\sigma_q^{-1}(s)}$. Since $E$ is
  commutative, this does indeed define an operad action in $\schk$.
  By Propositions~\ref{prop:shuffle} and~\ref{prop:Totsymmon} below,
  the composite $\Tot C_*$ is lax symmetric monoidal. Hence it follows
  that $\cE$ acts on $E_{h\cI}$.
\end{proof}

\subsection{Monoidality of \texorpdfstring{$C_*$}{C} and \texorpdfstring{$\Tot$}{Tot}}
It remains to verify the monoidal properties of $C_*$ and $\Tot$ that 
were used in the proof of Theorem~\ref{thm:E-infinity-action-on-hocolim-I}.
\begin{definition}
Let $Y$ and $Z$ be two objects in $\chk(\chk)$. Their tensor
product is $Y \otimes Z$ is the object in $\chk(\chk)$ with  
\[ (Y \otimes Z)_{p,q}= \bigoplus_{a_1+a_2 = p}
\bigoplus_{b_1+b_2 = q} Y_{a_1,b_1} \otimes Z_{a_2,b_2}\] 
and differentials $d_h^\otimes(y \otimes z) = d_h(y) \otimes z +
(-1)^{a_1}y \otimes d_h(z)$ and $d_v^\otimes(y \otimes z) =
{d_v(y) \otimes z} + {(-1)^{b_1}y \otimes d_v(z)}$. The symmetry
isomorphism 
$\tau \colon Y \otimes Z \ra Z \otimes Y$ sends a homogeneous element
$y \otimes z \in Y_{a_1,b_1} \otimes Z_{a_2,b_2}$  to $(-1)^{a_1a_2 +b_1b_2} z
\otimes y$. 
\end{definition}

\begin{proposition} \label{prop:shuffle}
The functor $C_* \colon \schk \ra {\chk}(\chk)$ is
lax symmetric monoidal. 
\end{proposition}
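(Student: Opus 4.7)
The natural candidate for the lax monoidal structure is the Eilenberg--Zilber shuffle map. Given simplicial chain complexes $A$ and $B$, I would construct a natural map
\[ \nabla_{A,B} \colon C_*(A) \otimes C_*(B) \to C_*(A \hot B) \]
that sends $a \otimes b \in A_{p_1,q_1} \otimes B_{p_2,q_2}$ to
\[ \sum_{(\mu,\nu) \in \mathrm{Sh}(p_1,p_2)} \mathrm{sgn}(\mu,\nu)\, s_\nu(a) \otimes s_\mu(b), \]
landing in the summand $A_{p_1+p_2,q_1} \otimes B_{p_1+p_2,q_2}$ of $(A \hot B)_{p_1+p_2,\, q_1+q_2}$. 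Since the degeneracies and shuffle signs act only in the simplicial direction and preserve chain degree, no Koszul sign needs to be inserted at this stage. The unit transformation is the evident inclusion of $k$ in bidegree $(0,0)$ into $C_*(S^0)$, where $S^0 \in \schk$ denotes the monoidal unit.

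The routine verifications proceed in four steps. First, $\nabla_{A,B}$ commutes with the vertical differentials, which is automatic because the simplicial degeneracies and the internal differentials of $A$ and $B$ commute and act in independent directions. Second, $\nabla_{A,B}$ commutes with the horizontal differentials $d_h = \sum_i (-1)^i d_i$; this is the classical Eilenberg--Zilber computation relying on the simplicial identities for $d_i s_j$ and the pairwise cancellation of terms coming from adjacent shuffles. Third, associativity follows from the two standard factorisations of $(p_1,p_2,p_3)$-shuffles as composites of $(p_1, p_2+p_3)$-shuffles with $(p_2,p_3)$-shuffles and as composites of $(p_1+p_2, p_3)$-shuffles with $(p_1,p_2)$-shuffles, with consistent signs. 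Fourth, unitality is immediate from the definitions.

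The main obstacle is the compatibility of $\nabla$ with the symmetries, where three sign sources must conspire to cancel. Applied to $a \otimes b \in A_{p_1,q_1} \otimes B_{p_2,q_2}$, the composite $\nabla_{B,A} \circ \tau_{C_*(A),C_*(B)}$ picks up the Koszul sign $(-1)^{p_1 p_2 + q_1 q_2}$ from $\tau$, and after re-indexing the resulting sum via the bijection $\mathrm{Sh}(p_2,p_1) \to \mathrm{Sh}(p_1,p_2)$, $(\mu,\nu) \mapsto (\nu,\mu)$, one gains an additional sign of $(-1)^{p_1 p_2}$. The composite $C_*(c_{A,B}) \circ \nabla_{A,B}$ instead contributes only the sign $(-1)^{q_1 q_2}$, since after all degeneracies are applied both factors lie in the common simplicial degree $p_1 + p_2$, so only the chain degrees enter the simplicial-chain symmetry $c_{A,B}$. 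The identity $(-1)^{p_1 p_2 + q_1 q_2} \cdot (-1)^{p_1 p_2} = (-1)^{q_1 q_2}$ yields the desired agreement, completing the verification that $(C_*, \nabla)$ is lax symmetric monoidal.
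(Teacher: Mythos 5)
Your proposal is correct and follows essentially the same route as the paper: both use the Eilenberg--Zilber shuffle map, note that compatibility with the vertical differential is automatic and with the horizontal differential is the classical simplicial computation, and then verify symmetry by matching the Koszul sign $(-1)^{p_1p_2+q_1q_2}$ from $\tau$, the sign $(-1)^{p_1p_2}$ from re-indexing the shuffles, and the sign $(-1)^{q_1q_2}$ from the symmetry of $\hat{\otimes}$. The only difference is cosmetic: you also spell out associativity and unitality, which the paper leaves implicit.
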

\begin{proof}
  As in \cite[Theorem VIII.8.8]{MacLane-homology} we denote
  $(p,q)$-shuffles as two disjoint subsets $\mu_1 < \ldots < \mu_p$
  and $\nu_1 < \ldots < \nu_q$ of $\{0,\ldots, p+q-1\}$. For
  simplicial chain complexes $A$ and $B$ we define maps
\[ \sh_{A,B} \colon  C_*(A) \otimes C_*(B) \ra C_*(A \hot B)\]
that turn $C_*$ into a lax symmetric monoidal functor: If $a \otimes
b$ is a homogeneous element in $A_{r_1,r_2} \otimes 
B_{s_1,s_2}$ we set 
\[ \sh_{A,B}(a \otimes b) = \sum_{(\mu,\nu)} \text{sgn}(\mu,\nu)
s_{\nu_{s_1}} \circ \ldots \circ s_{\nu_1}(a) \otimes s_{\mu_{r_1}}
\circ \ldots \circ s_{\mu_1}(b).\]
Here, the sum runs over all $(r_1,s_1)$-shuffles $(\mu, \nu)$ and
$\text{sgn}(\mu,\nu)$ denotes the signum of the associated
permutation. 

As the simplicial structure maps of $A$ and $B$ commute with $d^A$ and
$d^B$, it follows that $\sh$ commutes with the vertical
differential. The proof that the horizontal differential is
compatible with $\sh$ is the same as for $\sh$ in the context of
simplicial modules. 

It remains to show that $\sh$ turns $C_*$ into a lax symmetric
monoidal functor, \ie, we have to show that 
\begin{equation} \label{eq:symmshuffle}
C_*(c) \circ \sh(a \otimes b) = \sh \circ \tau(a \otimes b)
\end{equation}
for any homogeneous element $a \otimes b \in A_{r_1,r_2} \otimes
B_{s_1,s_2}$.  As $\tau(a \otimes b) = (-1)^{r_1s_1+r_2s_2} b \otimes
a$, the right-hand side of equation \eqref{eq:symmshuffle}
is 
\[\sum_{(\xi,\zeta)} (-1)^{r_1s_1+r_2s_2} \text{sgn}(\xi,\zeta) 
s_{\zeta_{s_1}} \circ \ldots \circ s_{\zeta_1}(b) \otimes s_{\xi_{r_1}}
\circ \ldots \circ s_{\xi_1}(a)\]
with $(\xi,\zeta)$ being $(s_1,r_1)$-shuffles, 
whereas the left-hand side of the equation gives 
\[(-1)^{r_2s_2} \sum_{(\mu,\nu)} \text{sgn}(\mu,\nu)
 s_{\mu_{r_1}} \circ \ldots \circ s_{\mu_1}(b) \otimes s_{\nu_{s_1}}
 \circ \ldots \circ s_{\nu_1}(a) \]
because $\tau$ introduces the sign $(-1)^{r_2s_2}$. Precomposing with
the permutation that exchanges the blocks $0 < \ldots < r_1-1$ and $r_1
 < \ldots < r_1+s_1 -1$ gives a bijection between the summation
 indices and introduces the sign $(-1)^{r_1s_1}$. Hence the two sides
agree. 
\end{proof}

\begin{proposition} \label{prop:Totsymmon}
The functor $\Tot$ is strong symmetric monoidal. 
\end{proposition}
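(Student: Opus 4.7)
The plan is to define the candidate natural isomorphism
\[
\phi_{Y,Z}\colon \Tot(Y)\tensor \Tot(Z) \to \Tot(Y\tensor Z)
\]
to be the identity on the underlying bigraded $k$-module, twisted by the Koszul sign. Concretely, for a homogeneous element $y \tensor z$ with $y\in Y_{a_1,b_1}$ and $z \in Z_{a_2,b_2}$ one sets $\phi_{Y,Z}(y\tensor z) = (-1)^{a_2 b_1}\, y\tensor z$. Both source and target agree as graded $k$-modules with $\bigoplus_{a_1+b_1+a_2+b_2=n}Y_{a_1,b_1}\tensor Z_{a_2,b_2}$ in total degree $n$, so $\phi_{Y,Z}$ is visibly bijective and natural in $Y$ and $Z$. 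What remains is to check compatibility with the differentials, with the symmetry, and with the associativity and unit constraints.

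For the differential, I would expand both $d_{\Tot}\circ \phi_{Y,Z}$ and $\phi_{Y,Z}\circ d^{\tensor}$ applied to a bihomogeneous $y\tensor z$. Each produces the same four types of terms, namely $d_h(y)\tensor z$, $d_v(y)\tensor z$, $y\tensor d_h(z)$, and $y\tensor d_v(z)$, so the verification reduces to comparing four signs. The signs coming from the total differential on $\Tot(Y)\tensor \Tot(Z)$ involve $(-1)^{a_1}$ on $d_v(y)$ (from $d_{\Tot}$ on $\Tot(Y)$) and $(-1)^{a_1+b_1}$ on the right-hand factor (from the Koszul rule in $\chk$), while the signs coming from $d_{\Tot}$ on $\Tot(Y\tensor Z)$ involve $(-1)^{a_1+a_2}$ on $d_v$-terms and $(-1)^{a_1}$ on the second factor of $d_h^{\tensor}$. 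The discrepancy between the two collections is precisely absorbed by the way the Koszul factor $(-1)^{a_2 b_1}$ changes when $b_1$ or $a_2$ decreases by $1$.

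For the symmetry, I need $\Tot(\tau_{Y,Z})\circ \phi_{Y,Z} = \phi_{Z,Y}\circ c_{\Tot(Y),\Tot(Z)}$. The left-hand side produces the exponent $a_2 b_1 + a_1 a_2 + b_1 b_2$, while the right-hand side produces $(a_1+b_1)(a_2+b_2) + a_1 b_2$; expanding shows both reduce to $a_1 a_2 + a_2 b_1 + a_1 b_2 + b_1 b_2$ modulo $2$. Associativity reduces to verifying that the two ways of iterating $\phi$ on $Y\tensor Z\tensor W$ insert the same sign, which follows from the identity $a_2 b_1 + a_3(b_1+b_2) = (a_2+a_3)b_1 + a_3 b_2$. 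The unit constraint is trivial: the monoidal unit of $\chk(\chk)$ is $k$ concentrated in bidegree $(0,0)$, $\Tot$ sends it to the unit $S^0$ of $\chk$, and the Koszul twist vanishes since one of the bidegrees is zero.

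The only genuine obstacle is sign bookkeeping. Conceptually the proposition is the categorification of the fact that $\chk(\chk)$ and $\chk$ are both symmetric monoidal under the Koszul sign rule and that $\Tot$ intertwines them. There is nothing delicate beyond choosing the twist $(-1)^{a_2 b_1}$, but this choice is essential and easy to misplace, so I would carry out each of the four sign verifications explicitly rather than appeal to a general principle.
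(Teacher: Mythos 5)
Your proposal is correct and follows essentially the same route as the paper: the same comparison map $\varphi_{Y,Z}(y\otimes z)=(-1)^{a_2b_1}\,y\otimes z$ (in the paper's notation $(-1)^{r_2s_1}$), with the same checks of compatibility with the differentials, the symmetry, associativity and the unit. One small slip: in the symmetry verification both exponents reduce to $a_1a_2+a_2b_1+b_1b_2$ modulo $2$, not to $a_1a_2+a_2b_1+a_1b_2+b_1b_2$ (the term $a_1b_2$ appears twice on the right-hand side and not at all on the left), but since the two sides do agree modulo $2$ the argument is unaffected.
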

\begin{proof}
Spelling out what $\Tot(Y) \otimes \Tot(Z)$ is in degree $n$ we obtain 
\[(\Tot(Y) \otimes \Tot(Z))_n \cong  \bigoplus_{r_1+r_2+s_1+s_2=n}
Y_{r_1,r_2} \otimes Z_{s_1,s_2},\] and we send a homogeneous element
$y \otimes z \in  Y_{r_1,r_2} \otimes Z_{s_1,s_2}$ to the element 
\[ (-1)^{r_2s_1} y \otimes z \in \Tot(Y \otimes Z)_n \cong
\bigoplus_{r_1+s_1+r_2+s_2=n} Y_{r_1,r_2} \otimes Z_{s_1,s_2}.\]
This gives isomorphisms 
\[\varphi_{Y,Z} \colon \Tot(Y) \otimes \Tot(Z) \ra \Tot(Y \otimes Z)\]
that are associative.  It is clear that $\Tot$ respects the unit up to 
isomorphism. 

The maps $\varphi_{Y,Z}$ are compatible with the differential: Let $y
\otimes z$ be a homogeneous element in $Y_{r_1,r_2} \otimes
Z_{s_1,s_2}$. The composition $d_{\Tot} \circ \varphi$ applied to $y 
\otimes z$ gives 
\begin{align*}
d_{\Tot} \circ \varphi(y \otimes z) & = (-1)^{r_2s_1}d_h^\otimes(y
\otimes z) + (-1)^{r_2s_1}(-1)^{r_1+s_1}d_v^\otimes(y \otimes z) \\
& = (-1)^{r_2s_1}d_h(y) \otimes z + (-1)^{r_2s_1+r_1} y \otimes d_h(z) \\
& \phantom{=} + (-1)^{r_2s_1+r_1+s_1}d_v(y) \otimes z + (-1)^{r_2s_1+r_1+s_1+r_2} y
\otimes d_v(z).
\end{align*}
First applying the differential to $y \otimes z$ and then $\varphi$
yields 
\begin{align*}
 & \varphi(d_{\Tot}(y) \otimes z + (-1)^{r_1+r_2}y \otimes
 d_{\Tot}(z)) \\ 
= & \varphi(d_h(y) \otimes z + (-1)^{r_1}d_v(y) \otimes z +
(-1)^{r_1+r_2}y \otimes d_h(z) + (-1)^{r_1+r_2+s_1} y \otimes d_v(z)) \\
= & (-1)^{r_2s_1}d_h(y) \otimes z + (-1)^{r_1+(r_2-1)s_1}d_v(y) \otimes z +
(-1)^{r_1+r_2+r_2(s_1-1)}y \otimes d_h(z)\\
& + (-1)^{r_1+r_2+s_1+r_2s_1} y \otimes d_v(z)
\end{align*}
thus both terms agree. 

We denote the symmetry isomorphism in the category of chain complexes
by $\chi$. Then 
\[\varphi \circ \chi(e\otimes f) = \varphi((-1)^{(r_1+r_2)(s_1+s_2)} f
  \otimes e) = (-1)^{r_1s_1+r_2s_2+s_1r_2 +2s_2r_1}f \otimes e\]
and this is equal to 
\[\Tot(\tau) \circ \varphi (e
\otimes f) = \Tot(\tau)((-1)^{r_2s_1}e \otimes f) = (-1)^{r_2s_1+r_1s_1+r_2s_2} f
\otimes e.\qedhere\] 
\end{proof}
\begin{remark}
One can also consider a symmetric monoidal structure on $\chk(\chk)$ with
the same underlying tensor product but with symmetry
isomorphism 
\[ y \otimes z \mapsto (-1)^{(r_1+r_2)(s_1+s_2)} z \otimes y\]
for homogeneous elements $y \otimes z \in Y_{r_1,r_2} \otimes
Z_{s_1,s_2}$. Then one can take $\varphi$ in
Proposition~\ref{prop:Totsymmon} to be the identity. However, this
symmetry isomorphism is \emph{not} compatible with the shuffle transformation
from the proof of Proposition \ref{prop:shuffle}. 
\end{remark}
\begin{remark}
  For a simplicial chain complex $A$ one can also consider a
  normalized object $N(A) \in \chk(\chk)$ where one divides out by the
  subobject generated by degenerate elements. As the simplicial
  structure maps commute with the differential of $A$, this is
  well-defined, and the proof of Proposition \ref{prop:shuffle} can be
  adapted as in \cite[Corollary VIII.8.9]{MacLane-homology} to show
  that the functor $N\colon \schk \ra \chk(\chk)$ is also lax
  symmetric monoidal. Consequently, one can also use $N$ instead of
  $C_*$ in the definition of the Barratt--Eccles operad $\cE$ and the
  homotopy colimit $P_{h\cI}$ so that
  Theorem~\ref{thm:E-infinity-action-on-hocolim-I} remains valid. 
\end{remark}
\section{Cochain functors with values in
  \texorpdfstring{$\cI$}{I}-chain complexes}
In this section we construct the functor $A^{\cI}$ discussed in the introduction
and a version of the ordinary cochains with values in $\cI$-chain complexes. 
\subsection{Adjunctions induced by simplicial objects}
We briefly recall an ubiquitous construction principle for adjunctions 
that we will later apply to simplicial objects in the categories
of commutative $\cI$-dgas and $\cI$-chain complexes in order to define
the commutative $\cI$-dga of polynomial forms on a simplicial set and
an integral version of the Sullivan realization functor (see Definition
\ref{def:aiandki}). 

\begin{construction}\label{constr:adj}
  Let $D_{\bullet} \colon \Delta^{\op} \to \cD$ be a simplicial object
  in a complete category $\cD$. Passing to opposite categories,
  $D_{\bullet}$ gives rise to a functor
  $\widetilde{D}^{\bullet} \colon \Delta \to \cD^{\op}$. Since $\cD$
  is complete, $\cD^{\op}$ is cocomplete. Hence restriction and left Kan
  extension along $\Delta \to \sset, [p]\mapsto \Delta^p$ define an
  adjunction
  \[ \widetilde{D}\colon \sset \rightleftarrows \cD^{\op} \colon K_D.\]
  Writing $D \colon \sset^{\op} \to \cD$ for the opposite of
  $\widetilde{D}$, this implies that for a simplicial set $X$ and an
  object $E$ of $\cD$, we have a natural isomorphism
  \begin{equation}\label{eq:D-adjunction} \cD(E, D(X)) =
    \cD^\op(\widetilde{D}(X),E) \iso \sset(X, K_D(E))
  \end{equation}
  exhibiting $D$ and $K_D$ as \emph{contravariant right adjoint}
  functors. Unraveling definitions, the contravariant functors $K_D$
  and $D$ are given by $K_D(E)_\bullet = \cD(E,D_\bullet)$ and $D(X) =
  \lim_{\Delta^p \to X} D_p$ where the limit is taken over the
  category of elements of $X$. In the special case $\cD =
  \mathrm{Set}$, writing $X$ as a colimit of representable functors
  indexed over its category of elements provides a natural bijection
  $D(X) \iso \sset(X,D)$.

  The functor $D$ extends the original
  functor $D_{\bullet}$ in that there is a natural isomorphism
  $D_{\bullet} \iso D(\Delta^{\bullet})$. The construction is also
  functorial in $D_{\bullet}$, \ie, a natural transformation
  $D_{\bullet} \to D'_{\bullet}$ of functors $\Delta^{\op}\to \cD$
  induces a natural transformation $D \to D'$ of functors
  $\sset^{\op} \to \cD$.
\end{construction}
We note an immediate consequence of having the adjunction $(\widetilde
D, K_D)$.  
\begin{lemma}\label{lem:D-colimits-limits}
  The functor $D$ takes colimits in $\sset$ to limits in $\cD$, 
and $K_D$ takes colimits in  $\cD$ to limits in $\sset$.\qed
\end{lemma}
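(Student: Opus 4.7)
The plan is to extract the statement directly from the adjunction $\widetilde{D}\colon \sset \rightleftarrows \cD^{\op} \colon K_D$ provided by Construction~\ref{constr:adj}, together with the standard fact that left adjoints preserve colimits and right adjoints preserve limits. No further combinatorial or model-categorical input is needed; the entire content of the lemma is formal.

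First I would note that since $\widetilde{D}$ is a left adjoint, it preserves colimits, so for any diagram $X_i$ in $\sset$ one has $\widetilde{D}(\colim_i X_i) \iso \colim_i \widetilde{D}(X_i)$ in $\cD^{\op}$. A colimit in $\cD^{\op}$ is by definition a limit in $\cD$, so translating back through the identity of functors $D = \widetilde{D}^{\op}$ from $\sset^{\op}$ to $\cD$ yields a natural isomorphism $D(\colim_i X_i) \iso \lim_i D(X_i)$ in $\cD$. This is the first half.

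Dually, $K_D$ is a right adjoint from $\cD^{\op}$ to $\sset$, so it preserves limits: $K_D(\lim_i E_i) \iso \lim_i K_D(E_i)$, where the limit on the left is formed in $\cD^{\op}$. Limits in $\cD^{\op}$ are colimits in $\cD$, and limits in $\sset$ remain limits in $\sset$, so this reads $K_D(\colim_i E_i) \iso \lim_i K_D(E_i)$ in $\sset$, which is the second half.

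There is no real obstacle; the only point requiring any care is bookkeeping about the variance, in particular remembering that the adjunction in Construction~\ref{constr:adj} is set up with $\cD^{\op}$ on the right, so that (co)limits in $\cD^{\op}$ swap to (co)limits in $\cD$ when one passes from $\widetilde{D}$ back to $D$. If desired, one could alternatively give a direct verification using the natural isomorphism~\eqref{eq:D-adjunction}: $\cD(E, D(\colim_i X_i)) \iso \sset(\colim_i X_i, K_D(E)) \iso \lim_i \sset(X_i, K_D(E)) \iso \lim_i \cD(E, D(X_i)) \iso \cD(E, \lim_i D(X_i))$, so by the Yoneda lemma $D(\colim_i X_i) \iso \lim_i D(X_i)$; the same argument with the variables interchanged handles $K_D$.
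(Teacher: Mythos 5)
Your argument is correct and is exactly the one the paper has in mind: the lemma is stated as an immediate consequence of the adjunction $(\widetilde{D}, K_D)$ from Construction~\ref{constr:adj} (hence the \qed in the statement), with the only point being the variance bookkeeping between $\cD$ and $\cD^{\op}$, which you handle properly. Your alternative Yoneda-style verification via the isomorphism~\eqref{eq:D-adjunction} is a fine equivalent formulation of the same formal fact.
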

When $D_{\bullet} \colon \Delta^{\op} \to \CchIk$ is a simplicial
object in commutative $\cI$-dgas, we may apply
Construction~\ref{constr:adj} both to $D_{\bullet}$ and to its
composite $D'_{\bullet} = UD_{\bullet}$ with the forgetful functor
$U\colon \CchIk \to \chIk$. Since the extensions of $D_{\bullet}$ and
$D'_{\bullet}$ to functors on $\sset$ are defined by limit
constructions and $U$ commutes with limits, we have a natural
isomorphism $U(D(X)) \iso D'(X)$ for a simplicial set $X$. The
adjoints $K_{D}$ and $K_{D'}$ are related by a natural isomorphism
$K_{D'} \iso K_{D}\circ \bC \colon (\chIk)^{\op} \to \sset$.  An
analogous remark applies to simplicial objects of algebras in $\chIk$
over a more general operad than the commutativity operad. 

For $D_{\bullet} \colon \Delta^{\op} \to \CchIk$, the fact that
$\CchIk \to \mathrm{Set}$, $E\mapsto E(\bld{m})_q$ commutes with
limits implies that the underlying set of $D(X)(\bld{m})_q$ is
$\sset(X,D_{\bullet}(\bld{m})_q)$. The pointwise $k$-module structure,
differentials and multiplications on these sets give rise to the
commutative $\cI$-dga structure on $D(X)$. 

\subsection{The commutative \texorpdfstring{$\cI$}{I}-dga version of
  polynomial forms}\label{subsec:I-version-polynomial} Composing the left adjoints in the adjunctions
$(F_{\bld{1}}^{\cI},\mathrm{Ev_{\bld{1}}})$ and $(\bC,U)$ introduced
in~\eqref{eq:free-I-chain-complex} and~\eqref{eq:free-commutative}
provides a left adjoint $\bC F^{\cI}_{\bld{1}} \colon \chk \to \CchIk$
made explicit in~\eqref{eq:free-comm-I-dga-explicit}.  We are
particularly interested in the commutative $\cI$-dga
$\bC F^{\cI}_{\bld{1}}(D^0)$.  For an element $i \in k$, the
$k$-module map $(D^0)_0 = k \to k  = \mathrm{Ev}_1(U^{\cI})_0$ determined by
$1\mapsto i$ gives rise to a map
$\varepsilon_i \colon \bC F^{\cI}_{\bld{1}}(D^0) \to U^{\cI}$. We
write $U^{\cI}_0$ and $U^{\cI}_1$ for the two commutative
$\bC F^{\cI}_{\bld{1}}(D^0)$-algebras resulting from the elements
$0,1 \in k$.

\begin{definition}\label{def:two-sided-bar-CchIk}
  We let $A^{\cI}_{\bullet}\colon \Delta^{\op} \to \CchIk$ be the
  simplicial commutative $\cI$-dga given by the two-sided bar
  construction
  \begin{equation}\label{eq:two-sided-bar-CchIk} [p] \mapsto A^{\cI}_p
    = B_p(U^{\cI}_0,\bC F^{\cI}_{\bld{1}}(D^0) ,U^{\cI}_1) = U^{\cI}_0
    \boxtimes \bC F^{\cI}_{\bld{1}}(D^0)^{\boxtimes p} \boxtimes
    U^{\cI}_1\ .
\end{equation}
As with the space level version (see e.g.~\cite{may-gils}), the outer
face maps are provided by the module structures of $U^{\cI}_0$ and
$U^{\cI}_1$ resulting from the above algebra structures, the inner
face maps come from the multiplication of
$\bC F^{\cI}_{\bld{1}}(D^0)$, and the degeneracy maps are induced by
its unit. \end{definition}

To make this simplicial object more explicit, we write $D^0_r$ for the
chain complex with copies of $k$ on generators $r$ in degree $0$ and
on $dr$ in degree $-1$ and $0$ elsewhere. Its non-zero differential is
$d(a\cdot r) = a\cdot dr$.  Since $ \bC F^{\cI}_{\bld{1}}$ is left
adjoint and $U^{\cI}$ is the unit for $\boxtimes$, commuting
$\bC F_1^{\cI}$ with coproducts provides an isomorphism of commutative
$\cI$-dgas
\[ A^{\cI}_p \iso \bC F^{\cI}_{\bld{1}}(D^0_{r_1(p)} \oplus \dots \oplus
D^0_{r_p(p)}) \]
where the generators $r_1(p), \dots, r_p(p)$ correspond to the $p$
copies of $\bC F^{\cI}_{\bld{1}}(D^0)$.  By adjunction, maps
$f \colon \bC F^{\cI}_{\bld{1}}(D^0_{r_1(p)} \oplus \dots \oplus
D^0_{r_p(p)}) \to E$
in $\CchIk$ correspond to families of elements
$f(r_1(p)), \dots, f(r_p(p)) \in E(\mathbf{1})_0$.  

We now set $r_0(p) = 0$ and define $r_{p+1}(p)$ to be the image of $1$
under the map
\[k = U^{\cI}(\bld{1})_0 \to \bC F^{\cI}_{\bld{1}}(D^0_{r_1(p)} \oplus
\dots \oplus D^0_{r_p(p)})(\bld{1})_0\]
induced by the unit.  With this notation, the simplicial structure
maps of the two sided bar construction~\eqref{eq:two-sided-bar-CchIk}
are determined by requiring
\begin{equation}\label{eq:simp-structure-AI}
d_i(r_j(p)) = \begin{cases} r_j(p-1) & \text{if } j \leq i\\
  r_{j-1}(p-1) & \text{if } j> i,  
\end{cases}
\quad   s_i(r_j(p)) = \begin{cases} r_j(p+1) & \text{if } j \leq i  \\
  r_{j+1}(p+1) & \text{if } j > i. 
\end{cases}
\end{equation}

Applying Construction \ref{constr:adj}, we obtain the following pair
of adjoint functors. 
\begin{definition} \phantomsection \label{def:aiandki}
  \begin{enumerate}[(i)]
    \item
 The \emph{commutative $\cI$-dga of polynomial
forms on a simplicial set $X$}, $A^{\cI}(X)$, is defined as
\[ A^{\cI}(X) = \sset(X, A^{\cI}_\bullet).\]
This defines a functor $A^{\cI} \colon \sset^{\op} \ra \CchIk$. 
\item
  Its adjoint functor $\langle -\rangle_\cI \colon \CchIk^{\op}\ra \sset$ sends
  a commutative $\cI$-dga
$E$ to
\[ \langle E\rangle_\cI = \CchIk(E, A^{\cI}_\bullet). \]
The simplicial set $\langle E\rangle_\cI$ is the
\emph{Sullivan realization of $E$}.
\end{enumerate}
\end{definition}

For a simplicial $k$-module $Z \colon \Delta^{\op} \to \Mod_k$,
\emph{extra degeneracies} are a family of $k$-linear maps
$s_{p+1}\colon Z_p \to Z_{p+1}$ satisfying $d_{p+1} s_{p+1} =
\id_{Z_p}$ if $p \geq 0$, $d_i s_{p+1} = s_p d_i \colon Z_p \to Z_p$
if $p\geq 1$ and $0\leq i \leq p$, and $0 = d_0 s_1 \colon Z_0 \to
Z_0$. The presence of extra degeneracies implies that $Z$ is
contractible to $0$ (in the sense that $Z \to 0$ is a weak equivalence
in $\sMod_k$) since the maps $(-1)^{p+1} s_{p+1}$ define a contracting
homotopy for the chain complex $C_*(Z)$.  

The following lemma is the technical backbone for our homotopical analysis of
the prolongation $A^{\cI}$ of $A^{\cI}_{\bullet}$ in Section~5. It
is analogous to~\cite[Proposition 1.1]{Bousfield-G_rational}.  
\begin{lemma}\label{lem:AI-contractible}
  Let $\bld{m}$ be an object of $\cI$ with $m = |\bld{m}|\geq 1$. Then for
  all integers $q$ satisfying $0 \geq q > -m$, the simplicial
  $k$-module $A^{\cI}_\bullet(\bld{m})_q$ is contractible to $0$.
\end{lemma}

\begin{remark}\label{rem:AI-not-contractible-in-I-degree-0}
  The statement of the lemma does not hold for general $\bld{m}$ and
  $q$. The easiest case is $\bld{m} = \bld{0}$ and $q=0$ where
  $A^{\cI}_\bullet(\bld{0})_0$ is the constant simplicial object on
  $k$, which is not contractible. One can also show that
  $\pi_1(A^{\cI}_\bullet(\bld{1})_{-1})$ is non-trivial.
\end{remark}

\begin{proof}[Proof of Lemma~\ref{lem:AI-contractible}]
  Let $A$ be a chain complex. The canonical bijection
\[ \cI(\bld{1}^{\concat s}, \bld{m})/\Sigma_s \to \{ T\subseteq
  \bld{m}\;|\; |T|=s\}, \qquad [\alpha] \mapsto \mathrm{im}(\alpha)\]
induces natural isomorphisms
\begin{equation}\label{eq:F1IAm-identification}\bC(F_{\bld{1}}^{\cI}(A))(\bld{m}) = \bigoplus_{s\geq 0}\left(\bigoplus_{\cI(\bld{1}^{\concat s},\bld{m})} A^{\otimes s}/\Sigma_s\right) \xrightarrow{\iso} \bigoplus_{T \subseteq \bld{m}} A^{\otimes T}\xrightarrow{\iso}  (A \oplus S^0)^{\tensor m}.
\end{equation}
Here the last isomorphism sends the tensor power indexed by $T$ to an
iterated tensor product of copies of $A$ and $S^0$ with copies of $A$
placed at the entries indexed by $T$. 

The isomorphism~\eqref{eq:F1IAm-identification} specializes to an isomorphism
\[  A^{\cI}_p(\bld{m})_q \iso \left( \left(D^0_{r_1(p)} \oplus
      \dots \oplus D^0_{r_p(p)} \oplus S^0_{r_{p+1}(p)}\right)^{\otimes \bld{m}}\right)_q\]
where we now write $r_{p+1}(p)$ for the generator of $(S^0)_0$. Under this identification, the simplicial structure maps of $[p] \mapsto A^{\cI}_p(\bld{m})_q$ are again determined by~\eqref{eq:simp-structure-AI}. 

As a first step, we now notice that $A^{\cI}_{\bullet} (\bld{1})_0$ is contractible since $s_{p+1}(r_j(p)) = r_{j}(p+1)$ defines extra degeneracies for this simplicial object. For the case of a general $\bld{m}$ and $0\geq q > -m$, we notice that the above isomorphisms induce an isomorphism of simplicial objects  \[ A^{\cI}_p(\bld{m})_q \iso \bigoplus_{\substack{q_1+ \dots + q_m  = q\\ q_i = 0, -1} }  A^{\cI}_p(\bld{1})_{q_1} \otimes \dots \otimes  A^{\cI}_p(\bld{1})_{q_m}.  
\] 
Since $q > -m$, each summand has at least one tensor factor that is of chain complex degree $0$ and thus contractible by the previous step. Since the shuffle map is a chain homotopy equivalence~\cite[Theorem VIII.8.1]{MacLane-homology}, it follows that each summand and thus the whole sum is contractible. 
\end{proof}

\subsection{Ordinary cochains} 
Let $C(X;k)$ be the cochains with values in $k$ on the simplicial set
$X$, viewed as a homologically graded chain complex concentrated in
non-positive degrees. (At this point, we disregard its cup product
structure.) So for $q \geq 0$, we have
$C(X;k)_{-q} = \mathrm{Set}(X_q,k)$ with the pointwise $k$-module
structure and differential induced by the face maps of $X$. The
cochains on the standard $n$-simplices assemble to a functor
$C_{\bullet} \colon \Delta^{\op} \to \chk, [p] \mapsto
C(\Delta^{p};k)$.
The following lemma is well known (see e.g.~\cite[Lemma 10.11 and Lemma
10.12(ii)]{FHT}). 

\begin{lemma}\phantomsection \label{lem:properties-usual-cochains}
  \begin{enumerate}[(i)]
  \item The extension of $C_{\bullet}$ to a functor $\sset^{\op}\to \chk$ resulting 
from Construction~\ref{constr:adj} is naturally isomorphic to $C(-;k)$. 
\item  For all $q \in \mathbb Z$, the simplicial $k$-module
  $C_{\bullet,q} = C(\Delta^{\bullet};k)_q$ is contractible to $0$.  
\end{enumerate}
\end{lemma}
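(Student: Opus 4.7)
The plan is to handle the two parts separately; both are standard and follow directly from the definitions.

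For (i), I would use the canonical colimit presentation $X \iso \colim_{\Delta^p \to X} \Delta^p$ indexed by the category of simplices of $X$. In each chain degree $-q$, the cochain functor is the composite $X \mapsto X_q \mapsto \mathrm{Set}(X_q,k)$, and since taking $q$-simplices preserves colimits while $\mathrm{Set}(-,k)$ turns them into limits, we obtain a natural isomorphism
\[ C(X;k) \iso \lim_{\Delta^p \to X} C(\Delta^p;k).\]
On the other hand, Construction~\ref{constr:adj} describes the extension of $C_{\bullet}$ to a functor on $\sset$ by precisely this limit. Naturality in $X$ is clear from the functoriality of the category-of-simplices construction.

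For (ii), I would construct explicit extra degeneracies on $C(\Delta^{\bullet};k)_q$ for each $q \in \mathbb Z$. When $q>0$ the complex $C(\Delta^p;k)_q$ vanishes and there is nothing to prove. When $q \leq 0$, set $n=-q$ and identify $C(\Delta^p;k)_q$ with $\mathrm{Set}((\Delta^p)_n,k)$. For $f \in \mathrm{Set}((\Delta^p)_n,k)$ define
\[ s_{p+1}(f)\colon (\Delta^{p+1})_n \to k, \qquad \sigma \mapsto \begin{cases} f(\bar\sigma) & \text{if } \sigma\colon [n]\to [p+1] \text{ does not hit } p+1,\\ 0 & \text{otherwise,}\end{cases}\]
where $\bar\sigma\colon [n]\to[p]$ is the unique factorization of $\sigma$ through the standard inclusion $[p]\hookrightarrow [p+1]$ when it exists. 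Note that $s_{p+1}$ is $k$-linear but is deliberately not induced by a map of simplicial sets.

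It remains to verify the three identities defining extra degeneracies. The identity $d_{p+1}s_{p+1}=\id$ is immediate because $d_{p+1}$ is pullback along $[p]\hookrightarrow[p+1]$, whose image consists exactly of simplices not hitting $p+1$. The identity $d_0 s_1 = 0$ holds because the image of $\delta^0\colon \Delta^0\to\Delta^1$ is the vertex $1$, on which every $s_1(f)$ vanishes. The identities $d_i s_{p+1} = s_p d_i$ for $0\leq i\leq p$ and $p\geq 1$ reduce to a set-theoretic check: for $\tau\colon [n]\to[p]$, the composite $\delta^i\circ\tau\colon [n]\to[p+1]$ hits the top vertex $p+1$ iff $\tau$ hits the top vertex $p$, and in the remaining case the two compositions agree because $\delta^i$ commutes with the standard inclusions $[p-1]\hookrightarrow [p]\hookrightarrow [p+1]$.

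The proof is routine, and the only subtlety worth flagging is that the extra degeneracy is constructed at the level of $k$-modules rather than coming from a simplicial map $\Delta^{p+1}\to\Delta^p$; this is precisely what enables the contraction to land at $0$ rather than at a copy of $k$. The analogous phenomenon appears in the proof of Lemma~\ref{lem:AI-contractible}, where the extra degeneracy on the unit summand of $\bC(P)$ is likewise not a map of $\cI$-dgas.
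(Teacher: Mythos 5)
Your proposal is correct and follows essentially the same route as the paper: part (i) by writing $X$ as a colimit of representables over its category of elements and using that $\mathrm{Set}(-,k)$ turns colimits into limits, and part (ii) by the same explicit extra degeneracies (zero off the last face of $\Delta^{p+1}$, equal to $f$ on the last face), verified via the cosimplicial identity $\delta^i\delta^p=\delta^{p+1}\delta^i$ for $i\leq p$. The subtlety you flag --- that the contraction is only $k$-linear and not induced by a simplicial map --- matches the analogous point in the proof of Lemma~\ref{lem:AI-contractible}.
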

\begin{proof}
  For (i), we note that the description of the extension as
  $ \lim_{\Delta^p \to X} C(\Delta^p;k)$ implies that there is a
  natural map from $C(X;k)$. Writing $X$ as a colimit of representable
  functors over its category of elements, the evaluation of this map
  at $q$ is a bijection since taking maps into $k$ turns colimits into limits.

  For (ii), we only need to consider the case $q \leq 0$, set $n = -q$
  and define \[s_{p+1}\colon C(\Delta^p;k)_q \to C(\Delta^{p+1};k)_q\]
  on $f \colon (\Delta^p)_n \to k$ as follows: We set $s_{p+1}(f)
  \colon (\Delta^{p+1})_n \to k$ to be $0$ on all $n$-simplices not in
  the image of $d^{p+1}\colon \Delta^p \to \Delta^{p+1}$ and require
  that $s_{p+1}(f)$ restricts to $f$ on the last face.
  Identifying $\Delta^{p+1}_n$ with
$\Delta([n],[p+1])$, this means 
  that $s_{p+1}(f)(d^{p+1}\alpha') = f(\alpha')$ and
  $s_{p+1}(f)(\alpha) = 0$ if $p+1 \in \alpha([n])$. Then for
  $\beta\colon [n] \to [p]$ , the equation $d_{p+1}(s_{p+1}(f))(\beta)
  = \beta$ holds by definition, and $d_0s_1 = 0$ in simplicial degree
  $0$ is also immediate.  Now assume $p\geq 1$. If $\beta$ has $p$ in its
  image, then $d_is_{p+1}(f)(\beta) = 0 =
  s_pd_i(f)(\beta)$. Otherwise, we must have $\beta = d^p \beta'$ and
  thus
\begin{align*} d_i s_{p+1}(f)(d^p \beta') & = s_{p+1}(f)(d^id^p
  \beta') = s_{p+1}(f)(d^{p+1}d^i \beta') \\
& = f(d^i \beta') = (d_if)(\beta') = s_p d_i(f)(d^p\beta'). \qedhere
\end{align*}
\end{proof}
For later use, we lift $C_{\bullet}$ to $\cI$-chain complexes by defining 
\[C^{\cI}_{\bullet}\colon \Delta^{\op} \to \chIk,\quad [p] \mapsto
F_{\bld{0}}^{\cI}(C(\Delta^{p};k)).\]

\begin{corollary}\phantomsection \label{cor:properties-of-CI}
\begin{enumerate}[(i)]
\item The extension $C^{\cI}$ of $C^{\cI}_{\bullet}$ to a functor
  $\sset^{\op} \to \chIk$ resulting from Construction~\ref{constr:adj} is
  naturally isomorphic to $X \mapsto F^{\cI}_{\bld{0}}C(X; k)$.
\item For all $q \in \mathbb Z$ and $\bld{m}$ in $\cI$, the
  simplicial $k$-module
  $C^{\cI}_\bullet(\bld{m})_q =
  F_{\bld{0}}^{\cI}(C(\Delta^{\bullet};k)_q)$
  is contractible to $0$. \qed
\end{enumerate}
\end{corollary}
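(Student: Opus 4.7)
The plan is to reduce both statements directly to Lemma~\ref{lem:properties-usual-cochains} by exploiting the fact that $\bld{0}$ is initial in $\cI$. This makes $\cI(\bld{0},\bld{n})$ a singleton for every $\bld{n}$, so formula \eqref{eq:free-I-chain-complex} gives a natural isomorphism $F^{\cI}_{\bld{0}}(A)(\bld{n}) \iso A$. In other words, $F^{\cI}_{\bld{0}}$ agrees with the constant-diagram functor $\const \colon \chk \to \chIk$, as the paper remarks just after \eqref{eq:free-I-chain-complex}. The constant functor is at the same time left adjoint to $\lim_{\cI}$ and right adjoint to $\colim_{\cI}$, so it preserves limits; concretely, limits in $\chIk$ are computed pointwise, and the pointwise value $F^{\cI}_{\bld{0}}(A)(\bld{n}) = A$ does not depend on $\bld{n}$.

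For part (i), I would start from the general formula from Construction~\ref{constr:adj}, namely $C^{\cI}(X) = \lim_{\Delta^p \to X} C^{\cI}_p$ with the limit taken over the category of elements of $X$. Evaluating at any $\bld{n}$ and using that this limit is pointwise, the observation above gives
\[
  C^{\cI}(X)(\bld{n}) \;\iso\; \lim_{\Delta^p \to X} F^{\cI}_{\bld{0}}(C(\Delta^p;k))(\bld{n}) \;\iso\; \lim_{\Delta^p \to X} C(\Delta^p;k).
\]
The latter is naturally isomorphic to $C(X;k)$ by Lemma~\ref{lem:properties-usual-cochains}(i). Assembling over $\bld{n}$ identifies $C^{\cI}(X)$ with $F^{\cI}_{\bld{0}}(C(X;k))$, with naturality in $X$ coming from the naturality in Lemma~\ref{lem:properties-usual-cochains}(i) and functoriality in $\bld{n}$ built into $F^{\cI}_{\bld{0}}$ via the identification with $\const$.

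For part (ii), the same identification $F^{\cI}_{\bld{0}}(A)(\bld{m}) \iso A$ gives an isomorphism of simplicial $k$-modules
\[
  C^{\cI}_{\bullet,q}(\bld{m}) \;=\; F^{\cI}_{\bld{0}}(C(\Delta^{\bullet};k)_q)(\bld{m}) \;\iso\; C(\Delta^{\bullet};k)_q,
\]
and the right-hand side is contractible to $0$ by Lemma~\ref{lem:properties-usual-cochains}(ii). Transporting the extra degeneracies constructed there along this isomorphism yields extra degeneracies for $C^{\cI}_{\bullet,q}(\bld{m})$, completing the argument.

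There is no real obstacle here; the only point worth flagging is that $F^{\cI}_{\bld{0}}$ commutes with the limit in Construction~\ref{constr:adj}, which is immediate from the initiality of $\bld{0}$ in $\cI$ rather than from $F^{\cI}_{\bld{0}}$ being a left adjoint. Note that the same argument fails for $\bld{m}$ with $|\bld{m}| \geq 1$ if one replaced $F^{\cI}_{\bld{0}}$ by $F^{\cI}_{\bld{m}}$, but this is not needed for the corollary.
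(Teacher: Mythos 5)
Your proof is correct and matches the paper's (implicit) argument: the corollary is stated with an immediate \qed precisely because $F^{\cI}_{\bld{0}}\iso\const_{\cI}$, so both parts reduce levelwise to Lemma~\ref{lem:properties-usual-cochains} exactly as you do, using that the limit in Construction~\ref{constr:adj} is computed pointwise in $\chIk$. Nothing further is needed.
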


\section{Homotopy theory of \texorpdfstring{$\cI$}{I}-chain complexes  and commutative \texorpdfstring{$\cI$}{I}-dgas}\label{sec:model_str}
In this section we review and set up results about model category
structures on $\cI$-chain complexes and commutative $\cI$-dgas. Much
of this is motivated by (and analogous to) the corresponding results
for space valued functors developed in~\cite[\S 3]{Sagave-S_diagram}.
 
We continue to consider the category of unbounded chain complexes
$\chk$ and equip it with the projective model structure whose weak
equivalences are the quasi-isomorphisms and whose fibrations are the
level-wise surjections~\cite[Theorem 2.3.11]{Hovey_model}. It has the
inclusions $S^{q-1}\hookrightarrow D^q$ as generating cofibrations and
the maps $0 \to D^q$ as generating acyclic cofibrations. We will also
need the following variant of this model structure.

\begin{proposition}
  Let $s$ be an integer. Then $\chk$ admits an \emph{$s$-truncated
    model structure} where a map $f \colon A \to B$ is a weak
  equivalence if $H_q(f)\colon H_q(A) \to H_q(B)$ is an isomorphism
  for all $q \geq s$ and a fibration if $f_q\colon A_q \to B_q$ is an
  epimorphism for all $q > s$. The $s$-truncated model structure is
  combinatorial and right proper.
\end{proposition}
\begin{proof}
  By shifting it is enough to consider the case $s =0$. The smart truncation 
  \[ \tau_{\geq 0} \colon \chk \to \mathrm{Ch}_k^{\geq 0}, \qquad A
    \mapsto (\dots \to A_2 \to A_1 \to \mathrm{ker}\, d_0^A) \] to
  non-negatively 
  graded chain complexes is right adjoint to the functor that adds
  copies of $0$ in negative degrees. The desired model structure
  arises by applying \cite[Theorem
  11.3.2]{Hirschhorn_model} to this adjunction and the standard
  projective model structure on $\mathrm{Ch}_k^{\geq 0}$~\cite[\S
  7]{Dwyer-S_model}. The assumptions of the theorem are trivially
  satisfied. The resulting model structure is combinatorial since
  $\chk$ is, and right proper because all objects are fibrant.
\end{proof}
\begin{remark}
  Since the usual long exact sequence argument is not applicable, we do
  not know if the $s$-truncated model structure is left proper. We do
  not investigate this further since it is not relevant for our
  applications.
\end{remark}

\subsection{Level model structures}
 We call an object $\bld{m}$ of $\cI$ \emph{positive} if
$|\bld{m}|\geq 1$ and write $\cI_+$ for the full subcategory of
positive objects in $\cI$. To ease notation, we write $m$ for
the cardinality of $\bld{m} = \{1,\dots,m\}$. 

A map $f\colon P \to Q$ in $\chIk$ is an \emph{absolute level
  equivalence} (resp. \emph{absolute level fibration}) if $f(\bld{m})$
is a quasi-isomorphism (resp. a fibration) in $\chk$ for all $\bld{m}$
in $\cI$.  A map $f\colon P \to Q$ in $\chIk$ is a \emph{descending
  level equivalence} (resp. \emph{descending level fibration}) if for
all $\bld{m}$ in $\cI_+$, the map $f(\bld{m})$ is a weak equivalence
(resp. fibration) in the $-m$-truncated model structure on
$\chk$.

\begin{proposition}
  These maps define an \emph{absolute level} and a \emph{descending
    level model structure} on $\chIk$. Both model structures are
  combinatorial and right proper, and the absolute level model
  structure is in addition left proper.
\end{proposition}
\begin{proof}
  For integers $s \leq t$, the identity functor is a left Quillen
  functor from the $t$-truncated model structure to the $s$-truncated
  model structure since $s$-truncated weak equivalences
  (resp. fibrations) are $t$-truncated weak equivalences
  (resp. fibrations).  To obtain the descending level model structure,
  we can therefore apply~\cite[Proposition 3.10]{HSS-retractive} to
  the constant functor $\mathcal C\colon \cI \to \mathrm{Cat}$ with
  value $\chk$ where $\mathcal C(\bld{m})$ is equipped with the
  $-m$-truncated model structure. The absolute level model structure
  arises by considering the same functor where $\mathcal C(\bld{m})$
  carries the standard projective model structure for all $\bld{m}$.
\end{proof}
The cofibrations in the absolute level model structure are the
retracts of relative cell complexes built out of cells of the form
$F^{\cI}_{\bld{m}}(S^{q-1} \hookrightarrow D^q)$ with $\bld{m}$ in
$\cI$ and $q \in \bZ$.  Here $F^{\cI}_{\bld{m}}$ is the free functor
defined in~\eqref{eq:free-I-chain-complex}. For the descending level
model structure, it follows from the proof of the previous proposition
that one may use the following set as generating cofibrations:
\[
\{ F^{\cI}_{\bld{m}}(S^{q-1} \hookrightarrow D^q) \;|\; \bld{m}\in \cI_+, q > -m\} \cup \{ F^{\cI}_{\bld{m}}(0\hookrightarrow D^{-m})  \;|\; \bld{m}\in \cI_+\} 
\]

\subsection{\texorpdfstring{$\cI$}{I}-model structures}
We now again use the homotopy colimit $P_{h\cI}$ from
Construction~\ref{constr:hocolim_I}.  A map $P \to Q$ in $\chIk$ is an
\emph{$\cI$-equivalence} if it induces a quasi-isomorphism
$P_{h\cI} \to Q_{h\cI}$. An $\cI$-chain complex $P$ is
\emph{absolute $\cI$-fibrant} if
$\alpha_*\colon P(\bld{m}) \to P(\bld{n})$ is a quasi-isomorphism for
all $\alpha\colon \bld{m} \to \bld{n}$ in $\cI$. It is
\emph{descending $\cI$-fibrant} if for all
$\alpha\colon \bld{m} \to \bld{n}$ in $\cI_+$ , the map
$\alpha_*\colon P(\bld{m}) \to P(\bld{n})$ is a weak equivalence in
the $-m$-truncated model structure, that is, if for all
$q\geq -m$, the map
$H_q(\alpha_*)\colon H_q(P(\bld{m})) \to H_q(P(\bld{n}))$ is an
isomorphism.

The next lemma will be needed to identify the $\cI$-equivalences as
part of a descending model structure.
\begin{lemma}\label{lem:I-equiv-descending-level-equiv}
  A map $P \to Q$ between descending $\cI$-fibrant objects in  $\chIk$
  is an $\cI$-equi\-va\-lence if and only if it is a descending level
  equivalence.  
\end{lemma}
\begin{proof}
For each positive $\bld{m}$ we consider the following commutative diagram: 
\[ 
  \xymatrix@-1pc{P(\bld{m}) \ar[rr] \ar[d] && \hocolim_{\cI_{\geq m}} P \ar[d]  \ar[rr]^-{\sim} && P_{h\cI}  \ar[d]\\ 
Q(\bld{m}) \ar[rr] && \hocolim_{\cI_{\geq m}} Q  \ar[rr]^-{\sim} && Q_{h\cI}  }
\] 
The right hand horizontal maps are induced by the inclusion
$\cI_{\geq m}$ of the full subcategory of objects of cardinality at
least $m$. Since this inclusion is homotopy cofinal
(compare~\cite[Corollary 5.9]{Sagave-S_diagram}), they are
quasi-isomorphisms. The left hand horizontal maps are induced by the
inclusion of the object $\bld{m}$ in $\cI_{\geq m}$. They are
$-m$-truncated equivalences by~\cite[Proposition
5.4]{Dugger_replacing} because the restrictions of $P$ and $Q$ to
$\cI_{\geq m}$ are diagrams of $-m$-truncated equivalences. The claim
then follows by two-out-of-three and the fact that a map is a
quasi-isomorphism if and only if it is an $s$-truncated equivalence
for all $s<0$.
\end{proof} 

Using this lemma, we can build the desired model structures:
\begin{proposition}\label{prop:I-model-structures}
  The absolute (resp. descending) level model structure on $\chIk$
  admits a left Bousfield localization with fibrant objects the
  absolute (resp. descending) $\cI$-fibrant objects. The weak
  equivalences in these two model structures coincide, and they are
  given by the $\cI$-equivalences. Both model structures are left
  proper and combinatorial. 
\end{proposition}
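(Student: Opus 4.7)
The plan is to apply the general theory of left Bousfield localization for combinatorial left proper model categories (\emph{e.g.}\ as in Barwick or Hirschhorn) to the absolute (resp.\ positive) level model structure on $\chIk$, localizing at the set $S$ of morphisms $F^{\cI}_{\bld{n}}(A) \to F^{\cI}_{\bld{m}}(A)$ induced by morphisms $\alpha\colon \bld{m}\to\bld{n}$ in $\cI$ (resp.\ $\cI_+$), with $A$ ranging over the domains and codomains of the generating cofibrations $S^{q-1}\hookrightarrow D^q$ of $\chk$. The previous proposition supplies the combinatorial and left proper input required by the localization theorem, so the localization exists.

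A standard adjunction calculation identifies the $S$-local fibrant objects with the $\cI$-fibrant ones: for level fibrant $P$, $S$-locality translates via the adjunction $(F^{\cI}_{\bld{m}},\mathrm{Ev}_{\bld{m}})$ to the requirement that $P(\alpha)\colon P(\bld{m})\to P(\bld{n})$ be a quasi-isomorphism for every $\alpha$ in $\cI$ (resp.\ $\cI_+$), which is exactly the $\cI$-fibrancy condition. Turning to the weak equivalences, the homotopy colimit $(-)_{h\cI}$ sends level equivalences to quasi-isomorphisms by the bicomplex spectral sequence recalled after Construction~\ref{constr:hocolim_I}, and it sends each map in $S$ to a quasi-isomorphism because $F^{\cI}_{\bld{m}}(A)_{h\cI}$ is naturally quasi-isomorphic to $A$ (the over category $\bld{m}\downarrow\cI$ being contractible since it has $\mathrm{id}_{\bld{m}}$ as an initial object). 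Hence $(-)_{h\cI}$ descends to the localized homotopy category, and every $S$-local equivalence is an $\cI$-equivalence.

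The main obstacle is the converse: every $\cI$-equivalence should be an $S$-local equivalence. The strategy is the key lemma that between $\cI$-fibrant objects, level equivalences and $\cI$-equivalences coincide. This should follow from the observation that for an $\cI$-fibrant $P$, all structure maps appearing in $\srep(P)$ are quasi-isomorphisms, so that the contractibility of $N\cI$ (the object $\bld{0}$ being initial) produces, via a degeneracy-type argument on the simplicial replacement, natural quasi-isomorphisms $P(\bld{m}) \to P_{h\cI}$. Combining this with $\cI$-fibrant replacement and two-out-of-three then promotes every $\cI$-equivalence to an $S$-local equivalence. Finally, the absolute and positive localized structures share the same class of weak equivalences because the $\cI$-equivalences are defined without reference to the chosen level structure, and the identification of $S$-local equivalences with $\cI$-equivalences carries over verbatim to the positive case, using that the generating cells $F^{\cI}_{\bld{m}}(S^{q-1}\hookrightarrow D^q)$ with $\bld{m}$ in $\cI_+$ vanish on $\bld{0}$, so that positively cofibrant replacements control $P(\bld{0})$ in the homotopy colimit.
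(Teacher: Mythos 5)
Your route is genuinely different from the paper's: the paper obtains both localized model structures and the identification of their weak equivalences by citing external results (Pavlov--Scholbach, via the identification of $\cI$-diagrams with generalized symmetric spectra, and Dugger's replacement theorem, whose weak equivalences are by construction the maps inducing equivalences of corrected homotopy colimits), whereas you build the localization by hand at the set $S$ of maps $F^{\cI}_{\bld{n}}(A)\to F^{\cI}_{\bld{m}}(A)$. The first half of your argument is fine: the preceding proposition supplies the combinatorial and left proper input, and the adjunction $(F^{\cI}_{\bld{m}},\mathrm{Ev}_{\bld{m}})$ together with the fact that the spheres $S^{q-1}$ detect quasi-isomorphisms identifies the $S$-local objects with the absolute (resp.\ positive) $\cI$-fibrant ones. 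This is essentially the direct construction that the paper attributes to Joachimi, modelled on the space-level argument of Sagave--Schlichtkrull; it buys independence from the cited theorems at the cost of the homotopy-colimit analysis below.

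Two steps need repair. First, ``$(-)_{h\cI}$ descends to the localized homotopy category'' does not follow merely from the fact that it inverts level equivalences and the maps of $S$: the localized homotopy category inverts \emph{all} $S$-local equivalences, not just $S$. The standard fix is to observe that $(-)_{h\cI}=\Tot C_*\srep(-)$ preserves colimits and sends the generating (acyclic) cofibrations $F^{\cI}_{\bld{m}}(S^{q-1}\hookrightarrow D^q)$ and $F^{\cI}_{\bld{m}}(0\to D^q)$ to these maps tensored with the non-negatively graded, levelwise free complex of chains on the nerve of $\bld{m}\downarrow\cI$, hence is left Quillen for the level structure; only then does the universal property of left Bousfield localization give that every $S$-local equivalence is an $\cI$-equivalence. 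Second, and more seriously, the positive case does not carry over verbatim. A positive level equivalence gives no control over $P(\bld{0})$, while $P_{h\cI}$ is a homotopy colimit over all of $\cI$; likewise, for a positive $\cI$-fibrant $P$ the structure maps out of $P(\bld{0})$ need not be quasi-isomorphisms, so the contractibility of $N\cI$ coming from the initial object $\bld{0}$ no longer yields $P(\bld{m})\xrightarrow{\ \sim\ }P_{h\cI}$. What is needed at both places is the homotopy cofinality of $\cI_+\subset\cI$ together with the contractibility of the classifying space of $\cI_+$ --- exactly the input used in Lemma~\ref{lem:hocolim-of-I-fibrant} --- and your remark that the positive generating cells vanish at $\bld{0}$ does not substitute for it. With these two insertions your argument goes through and recovers the statement without appealing to the references used in the paper.
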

\begin{proof}
  For the absolute case, we apply~\cite[Theorem
  5.2]{Dugger_replacing}. Since the weak equivalences in the latter
  case are the maps that induce weak equivalences on the corrected
  homotopy colimits, they coincide with the $\cI$-equivalences.  The
  resulting model structure is left proper and 
  combinatorial since these properties are preserved by left Bousfield
  localization.

  Let $\mathcal N$ be the subcategory of $\cI$ given by the subset
  inclusions. Since the homotopy colimit over $\mathcal N$ is
  equivalent to the colimit of a suitable cofibrant replacement of the
  underlying $\mathcal N$-diagram, there is a natural isomorphism
  $\colim_{\bld{m}\in\mathcal N} H_q(P(\bld{m})) \iso
  H_q(\hocolim_{\bld{m} \in \mathcal N}P(\bld{m}))$. Hence any
  descending level equivalence $P \to Q$ induces a quasi-isomorphism
  $\hocolim_{\mathcal N}P \to \hocolim_{\mathcal N}Q$ and thus an
  $\mathcal I$-equivalence by \cite[Proposition
  2.2.9]{shipley_THH}. So the weak equivalences of the descending
  level model structure are contained in those of the absolute
  $\cI$-model structure, and the same is by definition true for the
  cofibrations. We can thus apply \cite[Theorem 2.1]{Cole_mixing} to
  obtain a model structure that has the weak equivalences and
  cofibrations of the second model structure to be constructed.  Using
  Lemma~\ref{lem:I-equiv-descending-level-equiv}, an object $P$ is
  descending $\cI$-fibrant if and only if every absolute $\cI$-fibrant
  replacement $P \to P'$ is a descending level equivalence. Hence the
  dual of \cite[Proposition 3.6]{Cole_mixing} shows that the fibrant
  objects of the second model structure are the descending
  $\cI$-fibrant objects. The model structure is combinatorial
  by~\cite[Lemma 4.6 and Proposition 2.2]{Barwick_left} and left
  proper since the absolute $\cI$-model structure is.
\end{proof}
We call these two model structures the \emph{absolute} and
\emph{descending} \emph{$\cI$-model structures} on $\chIk$ and their
fibrations \emph{absolute} and \emph{descending $\cI$-fibrations}.

\begin{remark}
  Analogous to~\cite[Proposition 6.16]{Sagave-S_diagram}, $\cI$-model
  structures on chain complexes can be constructed without relying on
  an abstract existence theorem for left Bousfield localizations. This
  has been done by Joachimi~\cite{Joachimi_Diplom} in the absolute
  case, and we expect that similar arguments apply in the descending
  case. The advantage of the direct approach is that it provides an
  explicit characterization of the $\cI$-fibrations by a homotopy
  pullback condition like in~\cite[Proposition~3.2]{Sagave-S_diagram}.
\end{remark}

\begin{corollary}\label{cor:equivalence-fibrations-in-localization}
  A map between descending $\cI$-fibrant objects is a descending level
  fibration if and only if it is a descending $\cI$-fibration.
\end{corollary}
\begin{proof}
  This follows from~\cite[Proposition 3.3.16]{Hirschhorn_model}.
\end{proof}

The following useful consequence of
Lemma~\ref{lem:I-equiv-descending-level-equiv} 
was already used in  the proof of Proposition~\ref{prop:I-model-structures}:
\begin{corollary}\label{cor:fibrant-repl-of-descending-I-fib}
  If $P$ is descending $\cI$-fibrant in $\chIk$, then a fibrant replacement
  $P \to P'$ in the absolute $\cI$-model structure is a descending
  level equivalence. \qed
\end{corollary}

We also note that the homology groups of $P_{h\cI}$ can be read off
from a fibrant object $P$ in the following way.  
\begin{lemma}\label{lem:hocolim-of-I-fibrant}
  If $P$ is absolute $\cI$-fibrant in $\chIk$ and $\bld{m}$ is any
  object in $\cI$, then the canonical map $P(\bld{m}) \to P_{h\cI}$ is
  a quasi-isomorphism. If $P$ is only descending $\cI$-fibrant, then
  the induced map $H_q(P(\bld{m})) \to H_q(P_{h\cI})$ is an
  isomorphism when $\bld{m}$ is positive and $q\geq -m$.
\end{lemma}
\begin{proof}
  The absolute case follows from~\cite[Proposition
  5.4]{Dugger_replacing} since $\cI$ has contractible classifying
  space. With Corollary~\ref{cor:fibrant-repl-of-descending-I-fib},
  the claim for $P$ descending $\cI$-fibrant follows from the absolute
  case.
\end{proof}

As another consequence of~\cite[Theorem 5.2]{Dugger_replacing}, we
note that the adjunction $\colim_{\cI} \colon \chIk \rightleftarrows
\chk \colon \mathrm{const}_{\cI}$ is a Quillen equivalence when
$\chIk$ is equipped with the absolute or descending $\cI$-model
structure. In particular, the composite of
\begin{equation}\label{eq:hocolim-of-const}
  (\mathrm{const}_{\cI}A)_{h\cI} \to \colim_{\cI}\mathrm{const}_{\cI}A
  \to A  
\end{equation}
is always a quasi-isomorphism, and each $P$ in $\chIk$ is related by a zig-zag
of $\cI$-equivalences
\begin{equation}\label{eq:const-repl}
\const_{\cI} \colim_{\cI} (P^{\mathrm{cof}}) \leftarrow
  P^{\mathrm{cof}} \to P
\end{equation}
to a constant $\cI$-diagram $\colim_{\cI} (P)^{\mathrm{cof}}$ where
$P^{\mathrm{cof}}  \to P$ is a cofibrant replacement. 

We record the following lemma for later use. 
\begin{lemma}\label{lem:hI-vs-products}
If $(P_{j})_{j \in J}$ is a family of $\cI$-chain complexes, then 
the canonical map 
\[ \left(\textstyle \prod_{j \in J} P_{j} \right)_{h\cI} \to
  \textstyle\prod_{j \in J} (P_{j})_{h\cI} \] 
is a quasi-isomorphism provided that all the $P_{j}$'s are descending
$\cI$-fibrant.  
\end{lemma}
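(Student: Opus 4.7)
The plan is to reduce the lemma to Lemma~\ref{lem:hocolim-of-I-fibrant} by comparing both sides of the canonical map to the value of the diagram at a single positive object. Since products in the functor category $\chIk$ are computed levelwise, one has $(\prod_{j\in J}P_j)(\bld{m}) = \prod_{j\in J} P_j(\bld{m})$ for every $\bld{m}$ in $\cI$. Because $\Mod_k$ satisfies Grothendieck's axiom AB4$^*$, arbitrary products are exact; hence the homology of a chain complex of $k$-modules commutes with products, and in particular arbitrary products of quasi-isomorphisms are quasi-isomorphisms. Applied to the structure maps $\alpha_{*}$ with $\alpha$ in $\cI_{+}$, this shows that $\prod_j P_j$ is again positive $\cI$-fibrant.

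Next I would fix some positive object $\bld{m}$ in $\cI_+$. Lemma~\ref{lem:hocolim-of-I-fibrant} applied to $\prod_j P_j$ produces a quasi-isomorphism $(\prod_j P_j)(\bld{m}) \to (\prod_j P_j)_{h\cI}$, while the same lemma applied individually to each positive $\cI$-fibrant $P_j$ yields quasi-isomorphisms $P_j(\bld{m}) \to (P_j)_{h\cI}$. Taking the product over $j$ and invoking the observation from the previous paragraph turns the latter into a quasi-isomorphism $\prod_j P_j(\bld{m}) \to \prod_j (P_j)_{h\cI}$. By naturality of the comparison map $P(\bld{m}) \to P_{h\cI}$ in $P$, the canonical map $(\prod_j P_j)_{h\cI} \to \prod_j (P_j)_{h\cI}$ fits into a commutative triangle whose two other edges are the quasi-isomorphisms just constructed. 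The two-out-of-three property then forces the canonical map to be a quasi-isomorphism, as desired.

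The only step that is not purely formal is verifying that $\prod_j P_j$ inherits positive $\cI$-fibrancy, which hinges on exactness of infinite products in $\Mod_k$. Everything else reduces to a naturality diagram and a single application of Lemma~\ref{lem:hocolim-of-I-fibrant}; in particular, no direct analysis of the bicomplex $C_{*}\srep(\prod_j P_j)$ or its compatibility with $\prod_j C_{*}\srep(P_j)$ is needed, which is fortunate since the simplicial replacement involves a direct sum that does not commute with arbitrary products.
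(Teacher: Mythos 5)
Your proof is correct, but it follows a genuinely different route from the one in the paper. The paper's argument first reduces to constant $\cI$-diagrams: it uses that products of weak equivalences between fibrant objects are weak equivalences, applies this to the zig-zag~\eqref{eq:const-repl} of $\cI$-equivalences between positive $\cI$-fibrant objects to replace each $P_j$ by $\const_{\cI}A_j$, and then identifies both sides via the Quillen equivalence $(\colim_{\cI},\const_{\cI})$ and the quasi-isomorphism~\eqref{eq:hocolim-of-const}, finishing by two-out-of-three. You instead bypass the model-categorical reduction entirely: you note that positive $\cI$-fibrancy is preserved under products because products in $\chIk$ are levelwise and products in $\Mod_k$ are exact (so homology commutes with arbitrary products and products of quasi-isomorphisms are quasi-isomorphisms), and then you apply Lemma~\ref{lem:hocolim-of-I-fibrant} to $\prod_j P_j$ and to each $P_j$ at a fixed positive $\bld{m}$; the naturality of the comparison map $P(\bld{m})\to P_{h\cI}$ with respect to the projections makes the relevant triangle commute, and two-out-of-three gives the claim. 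Your commutativity check is sound (the canonical map is assembled from the maps $(\pi_j)_{h\cI}$, and each square built from a projection commutes by naturality), and the AB$4^*$ input is exactly what is needed since the simplicial replacement does not commute with infinite products. What each approach buys: yours is more elementary and self-contained, using only Lemma~\ref{lem:hocolim-of-I-fibrant} and homological algebra, whereas the paper's reduction to constant diagrams reuses machinery (cofibrant replacement, the $(\colim_{\cI},\const_{\cI})$ Quillen equivalence) that is already set up for other purposes in Section~4 and does not require singling out an evaluation object.
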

\begin{proof}
  Arbitrary products of weak equivalences between fibrant objects in a
  model category are weak equivalences.  Therefore, using
  that~\eqref{eq:const-repl} is a zig-zag of $\cI$-equivalences
  between descending $\cI$-fibrant objects under our assumptions allows
  us to assume that each $P_{j}$ is of the form $\const_{\cI}
  A_{j}$. Forming the adjoint of the isomorphism $ \prod_{j \in
    J}\const_{\cI}\! A_{j}\!  \xrightarrow{\iso} \const_{\cI}\!
  \left(\textstyle \prod_{j \in J} \! A_{j} \right) $ under the
  Quillen equivalence $(\colim_{\cI},\mathrm{const}_{\cI})$ shows that
  the composite in

\[ 
\left(\textstyle \prod_{j \in J}\const_{\cI} A_{j}
\right)_{h\cI} \to \textstyle\prod_{j \in J} (\const_{\cI}
A_{j})_{h\cI} \xrightarrow{\sim} \textstyle \prod_{j \in J}
A_{j}
\]  
is a quasi-isomorphism.  Since the second map is a product of
quasi-isomorphisms, the claim follows by two-out-of-three.
\end{proof}

\subsection{Commutative \texorpdfstring{$\cI$}{I}-dgas}
Although essentially only our formulation of
Theorem~\ref{thm:AI-KI-adjunctions-intro} depends on the existence of
a lifted model structure on $\CchIk$, the following result is the main
motivation for working with commutative $\cI$-dgas.
\begin{theorem}\label{thm:chain-Quillen-equiv}
  The category $\CchIk$ admits a \emph{descending $\cI$-model structure}
  where a map is a weak equivalence (or fibration) if the underlying
  map in the descending $\cI$-model structure on $\chIk$ is. With this
  model structure, $\CchIk$ is Quillen equivalent to the category
  of $E_{\infty}$ dgas and to the category of commutative $Hk$-algebra
  spectra.
\end{theorem}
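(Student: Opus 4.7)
The plan is to obtain the positive $\cI$-model structure on $\CchIk$ by transferring from the positive $\cI$-model structure on $\chIk$ along the free-forgetful adjunction $\bC\colon \chIk \rightleftarrows \CchIk \colon U$ of~\eqref{eq:free-commutative}, and to deduce the two Quillen equivalences from this model structure together with known rectification and spectral comparison results.

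First I would apply the usual Kan transfer criterion along $(\bC,U)$. The locally presentable nature of $\chIk$ makes smallness routine, so the serious condition is that pushouts along free maps $\bC(j)$ of generating acyclic cofibrations $j$ in $\chIk$ remain $\cI$-equivalences after forgetting back to $\chIk$. By the standard filtration of a pushout of commutative monoids, this reduces to controlling the $s$-fold symmetric pushout-product of $j$ quotiented by $\Sigma_s$, for all $s \geq 1$, where $j$ ranges over the maps $F^{\cI}_{\bld{m}}(S^{q-1}\hookrightarrow D^q)$ and $0 \to F^{\cI}_{\bld{m}} D^q$ with $\bld{m}$ positive. Positivity of $\bld{m}$ is exactly the point: since $\bld{m}^{\concat s}$ carries a free $\Sigma_s$-action under Day convolution whenever $|\bld{m}|\geq 1$, the relevant symmetric powers are built from $\Sigma_s$-freely cellular objects, and passing to $\Sigma_s$-coinvariants preserves the acyclicity one needs. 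This is the chain-complex analogue of J.~Smith's positive model structure for commutative symmetric ring spectra, alluded to in the introduction.

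Second, rather than redoing this transfer by hand, I would invoke the framework of~\cite[Theorem 3.3.4]{Pavlov-S_sym-operads} applied to the commutativity operad, or equivalently the result~\cite[\S 9]{Richter-S_algebraic}, which identifies $\CchIk$ with commutative monoids in an appropriate variant of symmetric spectra and lifts the model structure accordingly. Within the same framework, the Quillen equivalence with $E_\infty$ dgas follows by rectifying along the operad map $\cE \to \cC$ from Definition~\ref{def:Barratt-Eccles-operad}: in the positive $\cI$-model structure the symmetric powers of cofibrant objects are homotopically correct, which is precisely the condition that guarantees a Quillen equivalence between commutative monoids and $\cE$-algebras in $\chIk$, and the latter are in turn Quillen equivalent to $E_\infty$ dgas in $\chk$ via the derived $(\colim_{\cI},\const_{\cI})$ adjunction that was noted after Lemma~\ref{lem:hocolim-of-I-fibrant}.

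Third, the Quillen equivalence with commutative $Hk$-algebra spectra follows by composing the previous comparison with the established chain of Quillen equivalences between $E_\infty$ dgas over $k$ and commutative $Hk$-algebra spectra, which lifts through the positive model structures on both sides. The main obstacle is the symmetric pushout-product analysis in the first step: one must show that the $\Sigma_s$-quotient of an $s$-fold pushout-product of a positive generating acyclic cofibration is itself an $\cI$-equivalence, and this is the place where the switch from the absolute to the positive $\cI$-model structure is indispensable. Once this is settled, the remaining parts of the theorem become formal consequences of the transfer principle and of the cited results.
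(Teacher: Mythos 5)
Your proposal is correct in substance and ends up where the paper does: the paper's proof is simply to invoke \cite[Theorem 3.4.1]{Pavlov-S_sym-operads} for the existence of the lifted positive $\cI$-model structure and \cite[Theorem 9.5]{Richter-S_algebraic} for the comparison with commutative $Hk$-algebra spectra (and, via \S 9 of that paper, with $E_\infty$ dgas), so your plan of ``transfer along $(\bC,U)$, then cite the rectification/comparison machinery'' is essentially the same argument with the internals spelled out. Two points of comparison are worth noting. First, in your hand-made transfer sketch the set of maps you must control is larger than you state: the generating acyclic cofibrations of the \emph{positive $\cI$-model structure} are not just the level maps $0\to F^{\cI}_{\bld{m}}D^q$, since that model structure is a left Bousfield localization of the positive level structure; the localizing maps (roughly, cofibrant replacements of $F^{\cI}_{\bld{n}}D^q\to F^{\cI}_{\bld{m}}D^q$ for morphisms $\bld{m}\to\bld{n}$ in $\cI_+$) must also survive the symmetric pushout-product and $\Sigma_s$-quotient analysis. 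This is exactly what the symmetric flatness/h-monoidality hypotheses in Pavlov--Scholbach encode, and it is why the paper delegates to their Theorem 3.4.1 (the relevant statement for algebras/commutative monoids, rather than 3.3.4, which gives the underlying $\cI$-model structures). Your identification of positivity, via the free $\Sigma_s$-action on $\cI(\bld{m}^{\concat s},\bld{n})$ for $|\bld{m}|\geq 1$, as the crucial input is exactly right and mirrors Lemma~\ref{eq:free-comm-I-dga-on-cofibrant-acyclic}. Second, for the $E_\infty$ comparison you propose rectification along $\cE\to\cC$ inside $\chIk$ followed by the derived $(\colim_{\cI},\const_{\cI})$ equivalence on $\cE$-algebras; the paper instead gets this from the chain of Quillen equivalences in \cite[\S 9]{Richter-S_algebraic}, which passes through (commutative) $Hk$-module spectra. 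Your route is more direct and plausible within the Pavlov--Scholbach framework, but it needs the additional check that the lifted adjunction on $\cE$-algebras is still a Quillen equivalence, whereas the paper gets both comparisons for free from the cited results.
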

\begin{proof}
  The identification of $\cI$-diagrams with generalized symmetric
  spectra (see \cite[Proposition 9.1]{Richter-S_algebraic} or \cite[\S
  3.2]{Pavlov-S_sym-operads}) and \cite[Propositions 3.2.2 and
  3.3.1]{Pavlov-S_sym-operads} imply that $\chIk$ admits a
  \emph{positive $\cI$-model} structure with weak equivalences the
  $\cI$-equivalences.  This model structure has more cofibrations than
  the descending $\cI$-model structure and less cofibrations than the
  absolute $\cI$-model structure. The positive $\cI$-model structure
  lifts to $\chIk$ by~\cite[Theorem 4.1]{Pavlov-S_sym-operads}, and
  the resulting model category is Quillen equivalent to commutative
  $Hk$-algebra spectra and to $E_{\infty}$ dgas~\cite[Theorem~9.5, Corollary~8.3]{Richter-S_algebraic}. 
  
  The descending $\cI$-model structure on $\chIk$ is left proper,
  combinatorial, and has less cofibrations than the positive
  one. Hence the existence of the descending $\cI$-model structure on
  $\CchIk$ and the Quillen equivalence to the positive $\cI$-model
  structure immediately follow from~\cite[Theorem 11.3.2]{Hirschhorn_model}.
\end{proof}
The equivalence of homotopy categories resulting from this theorem is
actually induced by the homotopy colimit over $\cI$ with the $\cE$-action from
Theorem~\ref{thm:E-infinity-action-on-hocolim-I}:  
\begin{proposition}\label{prop:hI-equivalence-hty-categories}
The functor $(-)_{h\cI} \colon \CchIk \to \Echk$ induces an
equivalence of categories $\mathrm{Ho}(\CchIk) \to
\mathrm{Ho}(\Echk)$.  
\end{proposition}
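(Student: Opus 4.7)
The plan is to deduce this from the abstract Quillen equivalence of Theorem~\ref{thm:chain-Quillen-equiv} by exhibiting $(-)_{h\cI}$ as its concrete realization on homotopy categories.

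First I would verify that $(-)_{h\cI}$ descends to a functor between homotopy categories. By Theorem~\ref{thm:chain-Quillen-equiv}, weak equivalences in $\CchIk$ are precisely the $\cI$-equivalences, which by definition induce quasi-isomorphisms on $(-)_{h\cI}$; weak equivalences in $\Echk$ are detected on underlying chain complexes, so the functor is homotopical.

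The central observation is that the square
\[
\xymatrix{
\CchIk \ar[r]^{(-)_{h\cI}} \ar[d]_{U} & \Echk \ar[d] \\
\chIk \ar[r]_{(-)_{h\cI}} & \chk
}
\]
commutes, where the vertical arrows are the forgetful functors. The bottom row is the left derived functor of a Quillen equivalence (with right adjoint $\const_{\cI}$ under the positive $\cI$-model structure), and hence induces an equivalence $\mathrm{Ho}(\chIk) \simeq \mathrm{Ho}(\chk)$. Both forgetful functors are conservative on weak equivalences, which will be essential for the argument.

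For essential surjectivity, given an $\cE$-algebra $A$ in $\chk$, Theorem~\ref{thm:chain-Quillen-equiv} supplies a commutative $\cI$-dga $E$ corresponding to $A$ under the abstract equivalence; combined with the commutative square above and conservativity of the forgetful functor from $\Echk$, one identifies $E_{h\cI}$ with $A$ in $\mathrm{Ho}(\Echk)$. For fully faithfulness, I would compute $\mathrm{Ho}(\CchIk)(E,E')$ via cofibrant/fibrant replacements and compare with $\mathrm{Ho}(\Echk)(E_{h\cI},E'_{h\cI})$ using the bottom equivalence together with how the $\cE$-action in Theorem~\ref{thm:E-infinity-action-on-hocolim-I} is natural in the commutative $\cI$-dga structure.

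The main obstacle is precisely the comparison alluded to above: verifying that the zig-zag of Quillen equivalences proving Theorem~\ref{thm:chain-Quillen-equiv} (which proceeds through commutative $Hk$-algebra spectra following~\cite{Richter-S_algebraic} and~\cite{Pavlov-S_sym-operads}) induces the same functor on homotopy categories as the explicit $(-)_{h\cI}$. The cleanest way to bridge this gap is likely to use the uniqueness of left derived functors: exhibit both as the derived left adjoint to a common right adjoint (or invoke the characterization of derived functors via their behavior on a set of generators such as the free commutative $\cI$-dgas $\bC F^{\cI}_{\bld{m}}(D^q)$ with $\bld{m}$ positive), using compatibility with the forgetful functors to reduce the verification to the chain complex level where $(\colim_{\cI},\const_{\cI})$ is already known to realize the equivalence.
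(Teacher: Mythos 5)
Your proposal stalls exactly at the point you yourself flag as ``the main obstacle,'' and the devices you suggest for bridging it do not work as stated. The conservativity argument for essential surjectivity is not valid: conservativity of the forgetful functor $\Echk \to \chk$ only says that an \emph{existing} map of $\cE$-algebras is a weak equivalence when its underlying chain map is a quasi-isomorphism; it cannot produce an $\cE$-algebra map (or zig-zag) between $E_{h\cI}$ and the image of $E$ under the chain of Quillen equivalences, so knowing that the two have quasi-isomorphic underlying chain complexes does not identify them in $\mathrm{Ho}(\Echk)$. The same problem defeats the idea of ``reducing to the chain complex level via the forgetful functors'': at the chain level the statement is merely that $(-)_{h\cI}$ models the derived colimit, which is already known; the entire content of the proposition is the multiplicative comparison that the forgetful functors discard. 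Likewise, $(-)_{h\cI}\colon \CchIk \to \Echk$ is not a left Quillen functor and has no evident right adjoint, so ``uniqueness of derived left adjoints to a common right adjoint'' does not apply; and agreement objectwise on the free objects $\bC F^{\cI}_{\bld{m}}(D^q)$ does not yield a natural isomorphism of derived functors without a natural comparison map, which is precisely what is missing.

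What is needed, and what the paper's proof supplies, is a natural zig-zag of $\cE$-algebra quasi-isomorphisms between $M_{h\cI}$ and $\colim_{\cI}M^{\mathrm{cof}}$, the value on $M$ of the chain of Quillen equivalences from Theorem~\ref{thm:chain-Quillen-equiv}, where $M^{\mathrm{cof}} \to M$ is a cofibrant replacement in $\EchIk$. The paper constructs this via the bar resolution $\overline{X}(\bld{n}) = \hocolim_{\cI\downarrow\bld{n}}(X\circ\pi)$: one has $\colim_{\cI}\overline{X} \iso X_{h\cI}$; for a commutative $\cI$-dga $M$ the diagonal $\cE$-action on $\overline{M}$ induces on $\colim_{\cI}\overline{M}$ the same $\cE$-structure as Theorem~\ref{thm:E-infinity-action-on-hocolim-I} puts on $M_{h\cI}$; and $\colim_{\cI}\overline{X} \to \colim_{\cI}X$ is a quasi-isomorphism for cofibrant $\cI$-chain complexes $X$ (checked on free objects and propagated by cell induction). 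The resulting zig-zag
\[ M_{h\cI} \leftarrow (M^{\mathrm{cof}})_{h\cI} \iso \colim_{\cI}\overline{M^{\mathrm{cof}}} \to \colim_{\cI}M^{\mathrm{cof}} \]
shows that $(-)_{h\cI}$ induces the same functor on homotopy categories as the abstract equivalence, which proves the proposition. Your outline never constructs such a comparison, so as it stands the argument is incomplete at its decisive step.
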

\begin{proof}
An $\cI$-chain complex $X$ admits a \emph{bar resolution}
$\overline{X} \to X$ defined by $\overline{X}(\bld{n}) =
\hocolim_{\cI\downarrow\bld{n}}(X \circ \pi)$ where $\pi \colon
\cI\downarrow\bld{n} \to \cI$ is the canonical projection from the
overcategory forgetting the augmentation to $\bld{n}$. The inclusion
of the terminal object in $\cI\downarrow\bld{n}$ induces a map
$\overline{X} \to X$ which is a level equivalence by a homotopy
cofinality argument. The bar resolution has the property $\colim_{\cI}
\overline{X} \iso X_{h\cI}$. When $M$ is an $\cE$-algebra in $\chIk$,
then $\overline{M}$ inherits an $\cE$-algebra structure with diagonal
$\cE$-action (compare
Theorem~\ref{thm:E-infinity-action-on-hocolim-I} and the analogous
space level statement in~\cite[Lemma
6.7]{Schlichtkrull_Thom-symmetric}). When $M$ is a commutative
$\cI$-dga, then the $\cE$-algebra structure on $\colim_{\cI}(\overline{M})$
resulting from this observation and the strong monoidality of
$\colim_{\cI}$ coincides with the one on $M_{h\cI}$ provided by
Theorem~\ref{thm:E-infinity-action-on-hocolim-I}. We also note that if
$X$ is a cofibrant $\cI$-chain complex, then the map
$\colim_{\cI}\overline{X} \to \colim_{\cI}X$ is a
quasi-isomorphism. This can be checked directly on free $\cI$-chain
complexes, and the general case follows because both sides preserve
colimits and send generating cofibrations to levelwise injections.  

To prove the proposition, we note that the chain of Quillen
equivalences from Theorem~\ref{thm:chain-Quillen-equiv} sends a
commutative $\cI$-dga $M$ to $\colim_{\cI}M^{\mathrm{cof}}$, the
colimit over $\cI$ of a cofibrant replacement of $M$ in $\EchIk$. This
colimit is related to $M_{h\cI}$ by a natural zig-zag of $\cE$-algebra
maps   
\[ M_{h\cI} \leftarrow (M^{\mathrm{cof}})_{h\cI} \xrightarrow{\iso}
  \colim_{\cI}\overline{M^{\mathrm{cof}}} \to
  \colim_{\cI}{M^{\mathrm{cof}}}\] 
where the first map is a quasi-isomorphism since the cofibrant
replacement is an $\cI$-equivalence and the last map is a
quasi-isomorphism  by the above discussion since $M^{\mathrm{cof}}$ is
a cofibrant $\cI$-chain complex by~\cite[Theorem
4.4]{Pavlov-S_sym-operads}.  
\end{proof}

For later use we note that the commutative $\cI$-dga $\bC
F_{\bld{1}}^{\cI}(A)$ from~\eqref{eq:free-comm-I-dga-explicit} has the
following homotopical feature:  
\begin{lemma}\label{eq:free-comm-I-dga-on-cofibrant-acyclic}
  Let $A$ be a cofibrant acyclic chain complex. Then each $(\bC
  F_{\bld{1}}^{\cI}(A))(\bld{m})$ is cofibrant in $\chk$, and the unit
  $U^{\cI} \to \mathbb \bC 
  F_{\bld{1}}^{\cI}(A)$ is an absolute level equivalence.
\end{lemma}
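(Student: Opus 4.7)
The plan is to reason directly from the explicit formula in~\eqref{eq:free-comm-I-dga-explicit}
\[
 \bC(F_{\bld{1}}^{\cI}(A))(\bld{m}) \iso \textstyle \bigoplus_{s \geq 0} \left(\left( \textstyle \bigoplus_{\cI(\bld{1}^{\concat s},\bld{m})} A^{\tensor s}\right)/\, \Sigma_s\right)
\]
and to analyze each $s$-summand separately. The key combinatorial input is that the set $\cI(\bld{1}^{\concat s},\bld{m})$ of injections $\bld{s} \to \bld{m}$ carries a \emph{free} $\Sigma_s$-action (by precomposition), simply because an injection admits no nontrivial automorphisms of its source. Choosing a set of orbit representatives therefore yields an isomorphism of chain complexes
\[
\left(\textstyle\bigoplus_{\cI(\bld{1}^{\concat s},\bld{m})} A^{\tensor s}\right)\big/\,\Sigma_s \;\iso\; \textstyle\bigoplus_{[\alpha]\in \cI(\bld{1}^{\concat s},\bld{m})/\Sigma_s} A^{\tensor s}.
\]

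For part (i), I would then use that $\chk$ with its projective model structure is a symmetric monoidal model category satisfying the pushout-product axiom, so the tensor power $A^{\tensor s}$ of the cofibrant object $A$ is itself cofibrant in $\chk$. A direct sum of cofibrant chain complexes is cofibrant, and a direct sum over $s\geq 0$ of cofibrant complexes is again cofibrant; this yields cofibrancy of $(\bC F_{\bld{1}}^{\cI}(A))(\bld{m})$.

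For part (ii), the unit $U^{\cI} \to \bC F_{\bld{1}}^{\cI}(A)$ evaluated at $\bld{m}$ is the inclusion of the $s=0$ summand (which is $k$ concentrated in degree zero) into the full sum. So it suffices to show that the summands for $s \geq 1$ are acyclic. Since $A$ is cofibrant and acyclic, tensoring with $A$ preserves weak equivalences between cofibrant objects; by induction $A^{\tensor s}$ is acyclic for every $s \geq 1$ (the base case being $A^{\tensor 1} = A$, and the inductive step comparing $A \tensor A^{\tensor (s-1)}$ with $0 \tensor A^{\tensor(s-1)} = 0$). Direct sums of acyclic chain complexes are acyclic since homology commutes with direct sums, so each $s \geq 1$ summand is acyclic and the inclusion of the $s=0$ summand is a quasi-isomorphism in every $\cI$-degree $\bld{m}$.

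The only mild subtlety is justifying the freeness of the $\Sigma_s$-action on $\cI(\bld{1}^{\concat s},\bld{m})$ and the resulting splitting of the quotient, together with the compatibility of the identifications with the $\Sigma_s$-action on $A^{\tensor s}$ coming from the symmetric monoidal structure; once this is written down carefully the rest is a formal consequence of $\chk$ being a monoidal model category in which $A$ is cofibrant acyclic. No step is genuinely hard.
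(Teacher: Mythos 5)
Your proposal is correct and follows essentially the same route as the paper's proof: both exploit the freeness of the $\Sigma_s$-action on $\cI(\bld{1}^{\concat s},\bld{m})$ to split the quotient as a sum of copies of $A^{\tensor s}$ indexed by orbit representatives, and then use that $A^{\tensor s}$ is cofibrant and acyclic. The extra details you supply (pushout-product axiom for cofibrancy, induction for acyclicity) are exactly what the paper leaves implicit.
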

\begin{proof}
This is an immediate consequence of the isomorphism \eqref{eq:F1IAm-identification}. 
\end{proof}

\section{Comparison of cochain functors}
We now define a simplicial $\cI$-chain complex $B^{\cI}_{\bullet}$ by
setting $B^{\cI}_p = A^{\cI}_p \boxtimes C^{\cI}_p$ in simplicial
level $p$ and using the $\boxtimes$-products of the simplicial
structure maps of $A^{\cI}$ and $C^{\cI}$ as simplicial structure maps
for $B^{\cI}_{\bullet}$. There is a natural
isomorphism  \begin{equation}\label{eq:BI-levelwise}B^{\cI}_p(\bld{m})
  = (A^{\cI}_{p}\boxtimes F^{\cI}_{\bld{0}}(C_p))(\bld{m}) \iso
  A^{\cI}_p (\bld{m}) \tensor C_p 
\end{equation}
that results from the definition of $\boxtimes$ as a left Kan extension.

The unit maps $U^{\cI} \to C^{\cI}$ and $U^{\cI} \to A^{\cI}$ induce a
chain
\begin{equation}\label{eq:comparison-simplicial-objects}
  A^{\cI}_{\bullet} \to B^{\cI}_{\bullet} \leftarrow C^{\cI}_{\bullet}
\end{equation}
of maps of simplicial objects in $\chIk$. By
Construction~\ref{constr:adj}, this chain gives rise to a chain of
natural transformations $A^{\cI} \to B^{\cI} \leftarrow C^{\cI}$ of
functors $(\sset)^{\op} \to \chIk$.
\begin{theorem}\label{thm:chain-complex-level-comparison}
  For every simplicial set $X$, the maps
  $A^{\cI}(X) \to B^{\cI}(X) \leftarrow C^{\cI}(X)$ are
  descending level equivalences between descending $\cI$-fibrant objects.
\end{theorem}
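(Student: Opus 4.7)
The theorem asserts positive $\cI$-fibrancy of the three objects and positive level equivalence of the two comparison maps. For $C^{\cI}(X)$, the observation is that $F^{\cI}_{\bld{0}}$ is isomorphic to the constant $\cI$-diagram functor, giving $C^{\cI}_p(\bld{m}) = C(\Delta^p;k)$ with identity structure maps in $\bld{m}$; Corollary~\ref{cor:properties-of-CI}(i) extends this to $C^{\cI}(X)(\bld{m}) \iso C(X;k)$ with identity structure maps in $\bld{m}$, so $C^{\cI}(X)$ is absolute $\cI$-fibrant. Once the comparison maps are shown to be positive level equivalences, iterated two-out-of-three using the identity structure maps of $C^{\cI}(X)$ gives positive $\cI$-fibrancy of $A^{\cI}(X)$ and $B^{\cI}(X)$ as well.

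Fixing a positive $\bld{m}$, I would first verify quasi-isomorphisms at each simplicial degree $p$. Using $B^{\cI}_p(\bld{m}) \iso A^{\cI}_p(\bld{m}) \tensor C(\Delta^p;k)$ from~\eqref{eq:BI-levelwise} and the identification $A^{\cI}_p \iso \bC F^{\cI}_{\bld{1}}(D^0_{r_1(p)} \oplus \cdots \oplus D^0_{r_p(p)})$, Lemma~\ref{eq:free-comm-I-dga-on-cofibrant-acyclic} applied to the cofibrant acyclic chain complex $D^0_{r_1(p)} \oplus \cdots \oplus D^0_{r_p(p)}$ gives that $k = U^{\cI}(\bld{m}) \to A^{\cI}_p(\bld{m})$ is a quasi-isomorphism between cofibrant chain complexes. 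Combined with the quasi-isomorphism $k \to C(\Delta^p;k)$ from contractibility of $\Delta^p$, tensoring yields the quasi-isomorphisms $A^{\cI}_p(\bld{m}) \to B^{\cI}_p(\bld{m}) \leftarrow C^{\cI}_p(\bld{m})$ for each $p$.

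The main task, and the main obstacle, is to lift these levelwise quasi-isomorphisms to quasi-isomorphisms $A^{\cI}(X)(\bld{m}) \to B^{\cI}(X)(\bld{m}) \leftarrow C^{\cI}(X)(\bld{m})$ for a general simplicial set $X$. I would induct along the skeletal filtration $X = \colim_n \mathrm{sk}_n(X)$, using the pushout $\mathrm{sk}_n(X) \iso \mathrm{sk}_{n-1}(X) \cup_{\coprod \partial\Delta^n} \coprod \Delta^n$ and Lemma~\ref{lem:D-colimits-limits} to turn it into a pullback square under $A^{\cI}$, $B^{\cI}$, and $C^{\cI}$. The gluing lemma for pullbacks in the (right proper) projective model structure on $\chk$, combined with the inductive quasi-isomorphisms on the other three corners of the square, advances the induction provided one leg of the pullback is a fibration. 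The key technical input is thus that for every monomorphism $A \hookrightarrow Y$ of simplicial sets and every positive $\bld{m}$, the restriction maps $A^{\cI}(Y)(\bld{m}) \to A^{\cI}(A)(\bld{m})$ and $C^{\cI}(Y)(\bld{m}) \to C^{\cI}(A)(\bld{m})$ are degreewise surjective. For $C^{\cI}$ this is immediate from $C^{\cI}(-)(\bld{m}) = C(-;k)$; for $A^{\cI}$, the extra degeneracies of Lemma~\ref{lem:AI-contractible} make $A^{\cI}_{\bullet,q}(\bld{m}) \to \ast$ a trivial Kan fibration of simplicial sets, whose right lifting property against monomorphisms yields the desired surjectivity. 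Passage to $X = \colim_n \mathrm{sk}_n(X)$ uses $\sset(X,Z) \iso \lim_n \sset(\mathrm{sk}_n(X),Z)$ and the Mittag--Leffler condition on the tower (from surjective transition maps), so the Milnor $\lim^1$ exact sequence provides the desired quasi-isomorphism for $X$.
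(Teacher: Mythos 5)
Your plan is essentially sound and runs on the same engine as the paper's proof: levelwise equivalences in each simplicial degree (your second paragraph reproduces Lemma~\ref{lem:AI-BI-CI-level-equivalences}), contractibility of the simplicial $k$-modules turning boundary inclusions into degreewise surjections (your trivial-Kan-fibration argument is the same mechanism as Lemma~\ref{lem:latching-map-pos-I-fibration}, stated for general monomorphisms), and a cell induction with the cogluing lemma (the paper's Proposition~\ref{prop:equivalence-on-extensions}). The differences are organizational: you work one positive $\bld{m}$ at a time in $\chk$, use the skeletal tower together with a Milnor $\lim^1$ argument, and deduce positive $\cI$-fibrancy \emph{afterwards} by two-out-of-three against the constant diagram $C^{\cI}(X)$ — a legitimate and rather clean shortcut. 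The paper instead runs the induction in the positive level model structure on $\chIk$ over an arbitrary (possibly transfinite) cell presentation, using Corollary~\ref{cor:equivalence-fibrations-in-localization} to upgrade the level fibrations to $\cI$-fibrations, so that the fibrancy assertion comes out of the same induction; this also yields the more general Proposition~\ref{prop:equivalence-on-extensions}, which is reused later for Theorem~\ref{thm:char0}. Also note that your induction has to be phrased over the dimension of the simplicial set (not literally over the skeleta of the fixed $X$), so that the corner $\coprod\partial\Delta^n$ is covered by the inductive hypothesis; the paper makes this secondary induction explicit.

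One genuine point is missing: both your cogluing step and your $\lim^1$ comparison require the corresponding leg, respectively the transition maps, to be degreewise surjective in \emph{all three} towers, but you only establish this for $A^{\cI}$ and $C^{\cI}$. For $B^{\cI}$ you need that $B^{\cI}_{\bullet,q}(\bld{m})$ is also contractible for positive $\bld{m}$, so that $B^{\cI}(Y)(\bld{m})\to B^{\cI}(A)(\bld{m})$ is surjective for monomorphisms $A\hookrightarrow Y$. This is easy with the tools you already use — via \eqref{eq:BI-levelwise} one has $B^{\cI}_{\bullet,q}(\bld{m})\iso\bigoplus_{r+s=q}A^{\cI}_{\bullet,r}(\bld{m})\tensor C_{\bullet,s}$, and tensoring the extra degeneracies of the two factors gives extra degeneracies for $B^{\cI}$ (this is Lemma~\ref{lem:BI-contractible} in the paper) — but without it the comparison of pullbacks and of towers does not go through as stated.
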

We prove the theorem at the end of the section. The definition of $B^{\cI}$ and our strategy of proof are motivated by the corresponding rational result in~\cite[\S 10]{FHT}.

\begin{corollary}\label{cor:hty-invariance-AI}
  If $X \to Y$ is a weak homotopy equivalence of simplicial sets, then
  $A^{\cI}(Y) \to A^{\cI}(X)$ is a descending level equivalence
  between descending $\cI$-fibrant objects.
\end{corollary}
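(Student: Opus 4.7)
The strategy is to reduce the statement for $A^{\cI}$ to the corresponding statement for ordinary cochains via the comparison $A^{\cI} \to B^{\cI} \leftarrow C^{\cI}$ of Theorem \ref{thm:chain-complex-level-comparison}, and to use two-out-of-three in the positive level model structure.

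First I would dispose of the fibrancy claim: by Theorem \ref{thm:chain-complex-level-comparison}, for every simplicial set $Z$ the commutative $\cI$-dga $A^{\cI}(Z)$ is positive $\cI$-fibrant, so both $A^{\cI}(X)$ and $A^{\cI}(Y)$ automatically are. This leaves the question of whether $A^{\cI}(Y)\to A^{\cI}(X)$ is a positive level equivalence.

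Next, I would invoke naturality of the zig-zag \eqref{eq:comparison-simplicial-objects} in the simplicial set variable to obtain a commutative diagram in $\chIk$
\[
\xymatrix{
A^{\cI}(Y) \ar[r] \ar[d] & B^{\cI}(Y) \ar[d] & C^{\cI}(Y) \ar[l] \ar[d] \\
A^{\cI}(X) \ar[r] & B^{\cI}(X) & C^{\cI}(X) \ar[l]
}
\]
whose two rows are, by Theorem \ref{thm:chain-complex-level-comparison}, zig-zags of positive level equivalences between positive $\cI$-fibrant objects. Since positive level equivalences satisfy two-out-of-three (they are defined by quasi-isomorphism at each $\bld{m}\in\cI_+$), two applications of two-out-of-three reduce the claim to verifying that the right-hand vertical map $C^{\cI}(Y) \to C^{\cI}(X)$ is a positive level equivalence.

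Finally, Corollary \ref{cor:properties-of-CI}(i) identifies $C^{\cI}(Z)$ naturally with $F^{\cI}_{\bld{0}} C(Z;k)$, and $F^{\cI}_{\bld{0}}$ is (isomorphic to) the constant $\cI$-diagram functor because $\bld{0}$ is initial in $\cI$. Hence at every $\bld{m}$ in $\cI$, the map $C^{\cI}(Y)(\bld{m}) \to C^{\cI}(X)(\bld{m})$ is the standard cochain map $C(Y;k) \to C(X;k)$, which is a quasi-isomorphism because $X \to Y$ is a weak homotopy equivalence of simplicial sets. Thus $C^{\cI}(Y)\to C^{\cI}(X)$ is an absolute, hence a fortiori positive, level equivalence, and the corollary follows. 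No substantive obstacle arises; the real work is packaged into Theorem \ref{thm:chain-complex-level-comparison}, and the corollary is simply a naturality-plus-two-out-of-three argument on top of the classical homotopy invariance of singular cochains.
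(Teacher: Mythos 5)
Your proposal is correct and follows essentially the same route as the paper: the paper's own proof also deduces the statement from Theorem~\ref{thm:chain-complex-level-comparison} together with the fact that $C^{\cI}(Y)\to C^{\cI}(X)$ is an equivalence by homotopy invariance of singular cochains. Your version merely spells out the naturality-plus-two-out-of-three bookkeeping that the paper leaves implicit.
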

\begin{proof}
  The map $C^{\cI}(Y) \to C^{\cI}(X)$ is an $\cI$-equivalence since
  $C(Y) \to C(X)$ is a quasi-isomorphism of chain complexes by the
  homotopy invariance of singular homology. By the theorem, the claim
  about  $A^{\cI}(Y) \to A^{\cI}(X)$ follows. 
\end{proof}

Combining Theorem~\ref{thm:chain-complex-level-comparison} with
Lemma~\ref{lem:hocolim-of-I-fibrant} does in particular imply that for
positive objects $\bld{m}$, the chain complex $A^{\cI}(X)(\bld{m})$
captures the cohomology groups of $X$ in degrees between $0$ and
$|\bld{m}|$. It should not be surprising that there is a functor from
spaces to chain complexes concentrated in degrees between $0$ and
$- |\bld{m}|$ which has this property: if one applies the smart
truncation $\tau_{\geq -m}$ degreewise to the simplicial object
$[p] \mapsto A_{\mathrm{PL},p}$ and applies
Construction~\ref{constr:hocolim_I} to the resulting simplicial object
$ \tau_{\geq -m}A_{\mathrm{PL},\bullet}$, one gets back
$\tau_{\geq -m}A_{\mathrm{PL}}$ since $\tau_{\geq -m}$ is right
adjoint. In view of this, the chain complexes  $A^{\cI}(X)(\bld{m})$
are analogous to truncations of $A_{\mathrm{PL}}(X)$. 

\begin{lemma}\label{lem:AI-BI-CI-level-equivalences}
  The maps in~\eqref{eq:comparison-simplicial-objects} are absolute
  level equivalences between absolute $\cI$-fibrant objects when
  evaluated in simplicial degree $p$.
\end{lemma}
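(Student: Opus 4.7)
The plan is to prove the lemma by separately analyzing the three objects and the two comparison maps at a fixed simplicial degree~$p$.

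First, one rewrites $A^{\cI}_p$ in a more tractable form. Because $\bC$ and $F^{\cI}_{\bld{1}}$ are both left adjoints and $U^{\cI}$ is the $\boxtimes$-unit, the two-sided bar construction of Definition~\ref{def:two-sided-bar-CchIk} collapses to the isomorphism of commutative $\cI$-dgas
\[
  A^{\cI}_p \iso \bC F^{\cI}_{\bld{1}}(D^0_{r_1(p)} \oplus \dots \oplus D^0_{r_p(p)})
\]
recorded below~\eqref{eq:two-sided-bar-CchIk}. The argument chain complex is cofibrant and acyclic in $\chk$, so Lemma~\ref{eq:free-comm-I-dga-on-cofibrant-acyclic} implies that each $A^{\cI}_p(\bld{m})$ is cofibrant in $\chk$ and that the unit $\eta_A \colon U^{\cI} \to A^{\cI}_p$ is an absolute level equivalence. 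Turning to $C^{\cI}_p = F^{\cI}_{\bld{0}}(C(\Delta^p;k))$, initiality of $\bld{0}$ in $\cI$ identifies $F^{\cI}_{\bld{0}}$ with $\const_{\cI}$, so $C^{\cI}_p$ is tautologically absolute $\cI$-fibrant. The simplicial contraction of $\Delta^p$ onto a vertex, transported through the cochain functor, gives a chain homotopy equivalence $\iota \colon S^0 \to C(\Delta^p;k)$; this makes the unit $\eta_C \colon U^{\cI} \to C^{\cI}_p$ an absolute level equivalence as well.

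Next, one analyzes $B^{\cI}_p$ and the comparison maps using the isomorphism $B^{\cI}_p(\bld{m}) \iso A^{\cI}_p(\bld{m}) \tensor C(\Delta^p;k)$ from~\eqref{eq:BI-levelwise}. Under this identification, the map $A^{\cI}_p(\bld{m}) \to B^{\cI}_p(\bld{m})$ equals $\mathrm{id} \tensor \iota$, which remains a chain homotopy equivalence after tensoring and is in particular a quasi-isomorphism. The map $C^{\cI}_p(\bld{m}) \to B^{\cI}_p(\bld{m})$ has the form $\eta_A(\bld{m}) \tensor \mathrm{id}_{C(\Delta^p;k)}$; since $\eta_A(\bld{m})$ is a quasi-isomorphism between two cofibrant chain complexes in the projective model structure on $\chk$ (where every object is fibrant), a standard model-category argument promotes it to a chain homotopy equivalence, which persists after tensoring with $C(\Delta^p;k)$. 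Both comparison maps are therefore absolute level equivalences, and by composition the unit $U^{\cI} \to B^{\cI}_p$ is an absolute level equivalence too.

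Finally, absolute $\cI$-fibrancy of $A^{\cI}_p$, $B^{\cI}_p$, and $C^{\cI}_p$ follows from a two-out-of-three argument: the constant diagram $U^{\cI}$ is trivially absolute $\cI$-fibrant, and for any $P \in \{A^{\cI}_p, B^{\cI}_p, C^{\cI}_p\}$ and any morphism $\alpha \colon \bld{m} \to \bld{n}$ in $\cI$, naturality of the unit yields a commutative square whose horizontal arrows $U^{\cI}(\bld{m}) \to P(\bld{m})$ and $U^{\cI}(\bld{n}) \to P(\bld{n})$ are the absolute level equivalences constructed above and whose left vertical arrow is $\mathrm{id}_{S^0}$, forcing $P(\alpha)$ to be a quasi-isomorphism. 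The essential inputs are Lemma~\ref{eq:free-comm-I-dga-on-cofibrant-acyclic} together with the classical chain homotopy equivalence $S^0 \simeq C(\Delta^p;k)$; no serious obstacle arises beyond these, although it is worth noting that the two comparison maps, while symmetric in spirit, require the chain homotopy equivalence to be used in different tensor factors.
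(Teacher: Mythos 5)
Your proof is correct and follows essentially the same route as the paper: both rest on the identification $A^{\cI}_p \iso \bC F^{\cI}_{\bld{1}}(D^0_{r_1(p)}\oplus\dots\oplus D^0_{r_p(p)})$ together with Lemma~\ref{eq:free-comm-I-dga-on-cofibrant-acyclic}, the classical equivalence $S^0\to C(\Delta^p;k)$, the splitting~\eqref{eq:BI-levelwise}, and the fact that tensoring preserves chain homotopy equivalences, with fibrancy deduced from level equivalence to the constant diagram $U^{\cI}$. The only cosmetic difference is that you identify the two comparison maps directly as $\mathrm{id}\tensor\iota$ and $\eta_A(\bld{m})\tensor\mathrm{id}$, whereas the paper phrases the same computation via the unit maps into $B^{\cI}_p$.
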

\begin{proof}
  Let $\bld{m}$ be an object in $\cI$. By
  Lemma~\ref{eq:free-comm-I-dga-on-cofibrant-acyclic} the map $S^0 =
  U^{\cI}(\bld{m}) \to A^{\cI}_p(\bld{m})$ is a quasi-isomorphism
  between cofibrant and fibrant objects and thus even a chain homotopy
  equivalence. The map $S^0 \to C(\Delta^p) = C_{p}$ is a
  quasi-isomorphism by the known computation of
  $H^*(\Delta^p;k)$. Applying $F^{\cI}_{\bld{0}}$, it provides an
  absolute level equivalence $U^{\cI} \to
  C^{\cI}_p$. By~\eqref{eq:BI-levelwise}, we can decompose
  $U^{\cI}(\bld{m}) \to B^{\cI}_p(\bld{m})$ as \[S^0 \to
    C_p\xrightarrow{\iso} S^0 \tensor C_p \to A^{\cI}_p \tensor C_p .\]
  We already showed that the first map is a
  quasi-isomorphism. The last one is a quasi-isomorphism since
  $-\tensor C_p$ preserves chain homotopy equivalences. 
  The $\cI$-chain complexes $A^{\cI}_p$, $B^{\cI}_p$, and $C^{\cI}_p$
  are absolute $\cI$-fibrant for each $p\geq 0$ since they are absolute
  level equivalent to $U^{\cI}$ and $U^{\cI} = \const_{\cI}S^0$ is
  absolute $\cI$-fibrant.
\end{proof}

\begin{lemma}\label{lem:BI-contractible}
For all ${q \in \mathbb Z}$ and all positive objects $\bld{m}$ in $\cI$, the
simplicial $k$-module $B^{\cI}_{\bullet}(\bld{m})_q$ is
  contractible to $0$.
\end{lemma}
\begin{proof}
  From~\eqref{eq:BI-levelwise} we get an isomorphism
  $B^{\cI}_{\bullet}(\bld{m})_q \iso \bigoplus_{r+s=q} A^{\cI}_{\bullet}(\bld{m})_{r}\tensor C_{\bullet,s}$ of simplicial $k$-modules. Since
  $C_{\bullet, s}$ is contractible for every $s$ by
  Lemma~\ref{lem:properties-usual-cochains}(ii),
  so are the tensor
  products and thus also the sum.
\end{proof}

\begin{lemma}\label{lem:latching-map-pos-I-fibration}
  Let $D_{\bullet} \colon \Delta^{\op} \to \chIk$ be a simplicial
  object in $\cI$-chain complexes such that for all positive objects
  $\bld{m}$ in $\cI$ and all integers $q$ with
  $q > -|\bld{m}|$, the simplicial $k$-module
  $D_{\bullet}(\bld{m})_q$ is contractible to $0$. Then for all
  $p \geq 0$, the boundary inclusion $\partial \Delta^p \to \Delta^p$
  induces a descending level fibration
  $D( \Delta^p) \to D(\partial \Delta^p)$.
\end{lemma}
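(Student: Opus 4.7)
The plan is to evaluate the claim at a positive object $\bld{m}$ and a chain degree $q$, and reduce to a statement about the simplicial $k$-module $Z = D_{\bullet, q}(\bld{m})$. By Lemma~\ref{lem:D-colimits-limits} and the fact that limits in $\chIk$ are computed levelwise in $\cI$ and in each chain degree, for any simplicial set $X$ we have a natural identification
\[
  D(X)(\bld{m})_q \;\cong\; \sset(X, D_{\bullet,q}(\bld{m}))
\]
as in the $\cD = \mathrm{Set}$ case of Construction~\ref{constr:adj}. Thus the map $D(\Delta^p)(\bld{m})_q \to D(\partial\Delta^p)(\bld{m})_q$ identifies with the restriction map $\sset(\Delta^p, Z) \to \sset(\partial\Delta^p, Z)$. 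Since fibrations in $\chk$ are precisely the chain-degreewise surjections, it suffices to show this restriction map is surjective for every $q \in \bZ$ and every positive $\bld{m}$.

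For the surjectivity, I use that $Z$ is a simplicial $k$-module, so its underlying simplicial set is a simplicial abelian group and in particular a Kan complex. The hypothesis that $Z$ is contractible to $0$ in $\sMod_k$ means that the associated (equivalently, normalized) chain complex is acyclic, and by the standard identification $\pi_n(Z) \cong H_n(NZ)$ for simplicial abelian groups, $Z$ has trivial homotopy groups. Therefore $Z$ is a contractible Kan complex, so $Z \to *$ is an acyclic Kan fibration of simplicial sets. By definition, this map has the right lifting property with respect to each boundary inclusion $\partial\Delta^p \hookrightarrow \Delta^p$, which says exactly that $\sset(\Delta^p, Z) \to \sset(\partial\Delta^p, Z)$ is surjective as a map of sets; by $k$-linearity, it is then a surjection of $k$-modules.

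Combining the two steps, $D(\Delta^p)(\bld{m}) \to D(\partial\Delta^p)(\bld{m})$ is a chain-degreewise surjection for every positive $\bld{m}$, hence a fibration in $\chk$. This establishes that $D(\Delta^p) \to D(\partial\Delta^p)$ is a positive level fibration. The main conceptual step is the translation between the hypothesis that $Z$ is contractible in $\sMod_k$ and the simplicial-set statement that $Z \to *$ is an acyclic Kan fibration; once that is in place, everything else is formal. An alternative, more hands-on route would build the lifts by induction on $p$ directly from the extra degeneracies $s_{p+1}$ (which are available in all applications of the lemma in the paper, via Lemma~\ref{lem:AI-contractible} and Lemma~\ref{lem:BI-contractible}), but the route above is cleaner and isolates exactly why the conclusion must be restricted to positive $\bld{m}$: the underlying simplicial $k$-module is only assumed contractible at positive objects.
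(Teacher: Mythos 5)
Your proof is correct and is essentially the paper's own argument: both reduce the claim to the fact that $Z = D_{\bullet,q}(\bld{m})$ is a contractible Kan complex (being a contractible simplicial $k$-module), so that $Z \to 0$ is an acyclic Kan fibration and lifting against $\partial\Delta^p \hookrightarrow \Delta^p$ yields the fibration property. The only difference is presentational: the paper phrases the positive level fibration condition as the right lifting property against the generating maps $F^{\cI}_{\bld{m}}(0 \to D^q)$ and transfers the lifting problem through the $(D,K_D)$-adjunction, whereas you use the identification $D(X)(\bld{m})_q \cong \sset(X, D_{\bullet,q}(\bld{m}))$ to check degreewise surjectivity directly.
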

\begin{proof}
  A map in $\chIk$ is a descending level fibration if and only if it
  has the right lifting property against the maps
  $(U \to V) = F^{\cI}_{\bld{m}}(0 \to D^q)$ with $\bld{m}$ positive
  and $q > - |\bld{m}|$. By the adjunction~\eqref{eq:D-adjunction},
  the lifting property for $U\to V$ and
  $D( \Delta^p) \to D(\partial \Delta^p)$ is equivalent to the lifting
  property for $\partial \Delta^p \to \Delta^p$ and
  $K_D(V) \to K_D(U)$. Inspecting the definition of $K_D$, it follows
  that asking the latter lifting property for all $p\geq 0$ is
  equivalent to asking the map of simplicial sets
  $\chIk(V,D_{\bullet}) \to \chIk(U,D_{\bullet})$ to be an acyclic Kan
  fibration. Since $(F^{\cI}_{\bld{m}}, \mathrm{Ev}_{\bld{m}})$ is an
  adjunction and since morphisms in $\chk$ out of $D^q$ correspond to
  level $q$ elements, the assumption that $U \to V$ is
  $F^{\cI}_{\bld{m}}(0 \to D^q)$ implies that
  $\chIk(V,D_{\bullet}) \to \chIk(U,D_{\bullet})$ is isomorphic to
  $D_{\bullet}(\bld{m})_q \to 0$.  The source of this map is
  contractible by assumption and a Kan complex because it is the
  underlying simplicial set of a simplicial $k$-module. Hence
  $D_{\bullet}(\bld{m})_q \to 0$ is an acyclic Kan fibration.
\end{proof}
\begin{remark}
  When $D_{\bullet}(\bld{m})_q$ is not contractible,
  $D(\Delta^p)_q \to D(\partial \Delta^p)_q$ fails to be surjective.
  In view of Remark~\ref{rem:AI-not-contractible-in-I-degree-0}, this
  shows for example that
  $A^{\cI}(\Delta^p) \to A^{\cI}(\partial \Delta^p)$ is not an
  absolute level fibration.
\end{remark}

\begin{proposition}\label{prop:equivalence-on-extensions}
  Let $D_{\bullet} \!\to D'_{\bullet}$ be a natural transformation of
  functors ${\Delta^{\op} \!\to\! \chIk}$. Suppose that for all
  $p \geq 0$, the map $D_p \to D'_p$ is an absolute level equivalence
  between absolute $\cI$-fibrant objects and that for all positive
  objects $\bld{m}$ and all $q > -|\bld{m}|$, the simplicial
  $k$-modules $D_{\bullet}(\bld{m})_q$ and
  $D'_{\bullet}(\bld{m})_q$ are contractible. Then for every
  simplicial set~$X$, the map $D(X) \to D'(X)$ is a descending level
  equivalence between descending $\cI$-fibrant objects.
\end{proposition}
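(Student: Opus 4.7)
The plan is to reduce everything to the case of standard simplices via the skeletal filtration of $X$. Write $X = \colim_n \sk_n X$, where $\sk_n X$ arises from $\sk_{n-1} X$ by a pushout
\[
\begin{array}{ccc}
\coprod_{s} \partial \Delta^n & \longrightarrow & \sk_{n-1} X \\
\downarrow & & \downarrow \\
\coprod_{s} \Delta^n & \longrightarrow & \sk_n X
\end{array}
\]
indexed over the set of non-degenerate $n$-simplices of $X$. By Lemma~\ref{lem:D-colimits-limits}, both $D$ and $D'$ convert these colimits into limits. Hence $D(X) = \lim_n D(\sk_n X)$, and $D(\sk_n X)$ fits into a pullback square over $D(\sk_{n-1} X)$ and $\prod_s D(\Delta^n)$ along $\prod_s D(\partial \Delta^n)$, and similarly for $D'$.

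Next I would combine this with Lemma~\ref{lem:latching-map-pos-I-fibration}, which applies to both $D_\bullet$ and $D'_\bullet$ by hypothesis: the maps $D(\Delta^n) \to D(\partial \Delta^n)$ and $D'(\Delta^n) \to D'(\partial \Delta^n)$ are positive level fibrations. Arbitrary products of positive level fibrations are positive level fibrations (fibrations in $\chk$ are the surjections, which are stable under products), so the right-hand vertical maps in the squares above are positive level fibrations; hence so are the pullbacks $D(\sk_n X) \to D(\sk_{n-1} X)$. I would then prove by induction on $n$, starting from $D(\sk_{-1}X) = 0 \to 0 = D'(\sk_{-1}X)$, that $D(\sk_n X) \to D'(\sk_n X)$ is a positive level equivalence between positive $\cI$-fibrant objects. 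The inductive step uses: (a) right properness of the positive level model structure, which gives that the pullback of the positive level equivalence $\prod_s D(\Delta^n) \to \prod_s D'(\Delta^n)$ along a positive level fibration remains a positive level equivalence; and (b) right properness of $\chk$ applied levelwise in $\cI$ to check that pulling back a positive $\cI$-fibrant object along a positive level fibration between positive $\cI$-fibrants yields a positive $\cI$-fibrant pullback (for $\bld{m} \to \bld{n'}$ in $\cI_+$, the induced map on pullbacks is a quasi-isomorphism since three of the four corner maps in the resulting cube in $\chk$ are quasi-isomorphisms and we pull back along a fibration).

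Finally I would pass to the inverse limit over $n$. Evaluated at any $\bld{m}$, the towers $(D(\sk_n X)(\bld{m}))_n$ and $(D'(\sk_n X)(\bld{m}))_n$ consist of surjections of chain complexes and the levelwise comparison map is a quasi-isomorphism, so Mittag--Leffler vanishes and the induced map on limits $D(X)(\bld{m}) \to D'(X)(\bld{m})$ is a quasi-isomorphism; in particular $D(X) \to D'(X)$ is a positive level equivalence. The same Milnor $\lim^1$ argument, applied now to a morphism $\bld{m} \to \bld{n'}$ in $\cI_+$ between levels of the single positive $\cI$-fibrant tower $(D(\sk_n X))_n$, shows that $D(X)(\bld{m}) \to D(X)(\bld{n'})$ is a quasi-isomorphism, so $D(X)$ (and likewise $D'(X)$) is positive $\cI$-fibrant.

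The main obstacle is verifying the inductive step cleanly; in particular, separating the two things one has to propagate (positive level equivalence and positive $\cI$-fibrancy) through pullbacks along positive level fibrations, and then correctly handling the countable limit at the end via $\lim^1$. Once the skeletal reduction is in place and Lemma~\ref{lem:latching-map-pos-I-fibration} supplies the fibrations needed for right properness, these are standard arguments.
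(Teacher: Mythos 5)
Your overall route (skeletal filtration, Lemma~\ref{lem:latching-map-pos-I-fibration} supplying positive level fibrations, a cogluing argument for the cell-attachment pullbacks, then a limit over the tower) is essentially the paper's proof, but as written there is a genuine gap at the cogluing step. The pullback square you compare is
\[
D(\mathrm{sk}_n X)\;=\;D(\mathrm{sk}_{n-1}X)\times_{\prod_s D(\partial\Delta^n)}\textstyle\prod_s D(\Delta^n),
\]
and the cogluing lemma (this, rather than right properness applied to a single map, is what you need; it is available here since the positive level model structure is proper and in fact all objects are positive level fibrant) requires \emph{all three} corner comparison maps to be positive level equivalences. Your hypotheses give this for $\prod_s D(\Delta^n)\to\prod_s D'(\Delta^n)$, and your induction hypothesis gives it for $\mathrm{sk}_{n-1}X$, but the map $\prod_s D(\partial\Delta^n)\to\prod_s D'(\partial\Delta^n)$ is nowhere shown to be an equivalence --- the hypotheses only concern the cosimplicial levels $D_p\cong D(\Delta^p)$, and $\partial\Delta^n$ is not a skeleton of the fixed $X$, so it is not covered by your induction. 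The same object also enters your step (b): to run the levelwise cogluing argument for $\cI$-fibrancy you need $D(\partial\Delta^n)$ and $D'(\partial\Delta^n)$ to be positive $\cI$-fibrant, which again is an instance of the proposition itself. The phrasing ``the pullback of the positive level equivalence $\prod_s D(\Delta^n)\to\prod_s D'(\Delta^n)$ along a positive level fibration'' hides exactly this missing input. The fix is the nested induction the paper uses: since $\partial\Delta^n$ has dimension $n-1$, one first proves the statement for simplicial sets of dimension $<n$ (or argues by induction on the dimension of $\partial\Delta^p$), and only then runs your skeletal induction for general $X$.

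Two smaller points: in the limit step, ``Mittag--Leffler vanishes'' is not the right justification --- the towers of homology groups need not satisfy Mittag--Leffler; what one uses is that the structure maps of the towers of chain complexes are surjections, so the Milnor $\lim^1$-sequence applies, and a levelwise quasi-isomorphism of such towers induces isomorphisms on $\lim$ and $\lim^1$ of homology, hence a quasi-isomorphism on limits (equivalently, the standard model-categorical fact that limits of towers of fibrations preserve levelwise weak equivalences, which is how the paper argues; the paper also upgrades the maps to positive $\cI$-fibrations via Corollary~\ref{cor:equivalence-fibrations-in-localization} to get $\cI$-fibrancy of the limit, whereas your hands-on $\lim^1$ argument for a morphism $\bld{m}\to\bld{n'}$ in $\cI_+$ is an acceptable substitute). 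With the $\partial\Delta^n$ case supplied, the rest of your argument goes through.
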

\begin{proof}
  As usual, this is proved by cell induction. Let us first assume that
  for all $p \geq 0$, the map $D(\partial \Delta^p) \to D'(\partial
  \Delta^p)$ is a descending level equivalence between descending
  $\cI$-fibrant objects. Any simplicial set $X$ can be written as a cell
  complex $X = \colim_{\lambda < \kappa}X_{\lambda}$ built from
  attaching cells of the form $\partial \Delta^p \to \Delta^p$. The
  functor $D$ takes the inclusion $\partial \Delta^p \to \Delta^p$ to
  a descending level fibration by
  Lemma~\ref{lem:latching-map-pos-I-fibration}. Since we assume that
  $D(\partial \Delta^p)$ and $D_p \iso D(\Delta^p)$ are descending
  $\cI$-fibrant, it follows from
  Corollary~\ref{cor:equivalence-fibrations-in-localization} that
  $D(\Delta^p) \to D(\partial \Delta^p)$ is a descending
  $\cI$-fibration. The same holds for $D'$. Since both $D$ and $D'$
  take colimits to limits by Lemma~\ref{lem:D-colimits-limits}, the
  coglueing lemma in the descending level model structure and the fact
  that base change preserves $\cI$-fibrations shows that $D(X) \to
  D'(X)$ arises as a limit of pointwise descending level equivalences
  between inverse systems of descending $\cI$-fibrations. Hence it is a
  descending level equivalence between descending $\cI$-fibrant objects.

  Since $\partial \Delta^p$ only has non-degenerate simplices in
  dimensions strictly less than $p$, an analogous induction over the
  dimension of $\partial \Delta^p$ shows the remaining claim about $D(\partial
  \Delta^p) \to D'(\partial \Delta^p)$.
\end{proof}

\begin{proof}[Proof of Theorem~\ref{thm:chain-complex-level-comparison}]
Combining Lemma~\ref{lem:AI-contractible},
Corollary~\ref{cor:properties-of-CI}(ii),  
Lemma~\ref{lem:BI-contractible}, and
Lemma~\ref{lem:AI-BI-CI-level-equivalences},  
the two maps $A^{\cI} \to B^{\cI}$ and $C^{\cI} \to B^{\cI}$ satisfy the
hypotheses
of Proposition~\ref{prop:equivalence-on-extensions}. 
\end{proof}

We can now also prove Theorem~\ref{thm:AI-KI-adjunctions-intro} from
the introduction:
\begin{proof}[Proof of Theorem~\ref{thm:AI-KI-adjunctions-intro}]
  The adjunction $(A^{\cI}, \langle -\rangle_\cI)$ arises from
  $A^{\cI}_{\bullet}$ by applying
  Construction~\ref{constr:adj}.
  Lemma~\ref{lem:AI-contractible} and
  Lemma~\ref{lem:latching-map-pos-I-fibration} show that $A^{\cI}$
  sends cofibrations to descending level fibrations and thus to descending
  $\cI$-fibrations. Corollary~\ref{cor:hty-invariance-AI} implies that
  $A^{\cI}$ sends weak homotopy equivalences to descending level
  equivalences and thus to $\cI$-equivalences.  The rest is an
  immediate consequence of the self-duality of model structures with
  respect to the passage to opposite categories and the adjunction
  isomorphisms~\eqref{eq:D-adjunction}.
\end{proof}

\subsection{The relation to polynomial forms}\label{subsec:A_PL} In
order to relate $A^{\cI}$ to the functor  $A_{\mathrm{PL}}$ used in
rational homotopy theory, we first identify the functor
$A_{\mathrm{PL},\bullet}$ described in~\eqref{eq:A_PL} as a two sided
bar construction. Arguing as in
Section~\ref{subsec:I-version-polynomial} and using the notation
$D^0_r$ introduced there, we get an isomorphism  
\[ B_{p}(S^0, \bC D^0, S^0) \iso \bC(D^0_{r_1(p)} \oplus \dots \oplus
  D^0_{r_p(p)}) \]
where $\bC$ denotes the free commutative dga on a chain complex. The assignments
\[ r_j(p) \mapsto \textstyle \sum_{0\leq l \leq j-1}
  t_l(p)\quad \text{ and }\quad t_j(p) \mapsto r_{j+1}(p) -r_j(p) \]
define inverse isomorphisms between $B_{p}(S^0,\bC D^0, S^0)$ and
$A_{\mathrm{PL},p}$, and these isomorphisms are compatible with the
structure maps described in Section~\ref{subsec:outline_constr}.

By adjunction, the canonical map
$D^0 \to (\const_{\cI} \bC D^0)(\bld{1})$ induces a map of commutative
$\cI$-dgas $\bC F^{\cI}_{\bld{1}}(D^0) \to \const_{\cI} \bC
D^0$. Using the above description of $A_{\mathrm{PL},\bullet}$ as a
two sided bar construction, this map in turn induces a  map
$A^{\cI}_{\bullet} \to \const_{\cI}A_{\mathrm{PL},\bullet}$ in
$\CchIk$ and thus a natural map
$A^{\cI}(X) \to \const_{\cI} A_{\mathrm{PL}}(X)$ on the extensions to
simplicial sets.
\begin{theorem} \label{thm:char0} Let $k$ be a field of characteristic
  $0$. Then $A^{\cI}(X) \to \const_{\cI} A_{\mathrm{PL}}(X)$ is a
  descending level equivalence. It induces a quasi-isomorphism
  $A^{\cI}(X)_{h\cI} \to A_{\mathrm{PL}}(X)$ that is an $\cE$-algebra
  map if we view the cdga $A_{\mathrm{PL}}(X)$ as an $\cE$-algebra
  by restricting along the canonical operad map from $\cE$ to the
  commutativity operad.
\end{theorem}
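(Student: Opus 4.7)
The plan is to apply Proposition~\ref{prop:equivalence-on-extensions} to the natural map of simplicial commutative $\cI$-dgas $A^{\cI}_\bullet \to \const_\cI A_{\mathrm{PL},\bullet}$ and then read off the homotopy colimit statement from the machinery developed in Sections~2 and~4. Characteristic zero enters only at the simplicial level check.

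First I would verify that for each $p \geq 0$ the map $A^{\cI}_p \to \const_\cI A_{\mathrm{PL},p}$ is a positive level equivalence between positive $\cI$-fibrant objects. The target is trivially $\cI$-fibrant. For the source, the identification $A^{\cI}_p \iso \bC F^{\cI}_{\bld{1}}(D^0_{r_1(p)} \oplus \dots \oplus D^0_{r_p(p)})$ together with Lemma~\ref{eq:free-comm-I-dga-on-cofibrant-acyclic} applied to the cofibrant acyclic chain complex $D^0_{r_1(p)} \oplus \dots \oplus D^0_{r_p(p)}$ shows that the unit $U^{\cI} \to A^{\cI}_p$ is an absolute level equivalence, so in particular $A^{\cI}_p(\bld{m}) \simeq k$ for every positive $\bld{m}$. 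In characteristic zero the symmetrizer idempotent $\frac{1}{s!}\sum_{\sigma \in \Sigma_s} \sigma$ splits $(A^{\otimes s})_{\Sigma_s}$ off $A^{\otimes s}$, so the ordinary free cdga functor $\bC$ sends acyclic chain complexes to acyclic cdgas; hence
\[ A_{\mathrm{PL},p} \iso \bC(D^0_{r_1(p)} \oplus \dots \oplus D^0_{r_p(p)}) \simeq \bC(0) = k \]
with the quasi-isomorphism realized by the unit of $\bC$. The comparison map fits into a commutative triangle with these two unit quasi-isomorphisms, so two-out-of-three yields the desired positive level equivalence.

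To apply Proposition~\ref{prop:equivalence-on-extensions}, the remaining hypothesis is that the simplicial $k$-modules $A^{\cI}_{\bullet,q}(\bld{m})$ and $A_{\mathrm{PL},\bullet,q}$ are contractible for every $q \in \bZ$ and positive $\bld{m}$; these are Lemma~\ref{lem:AI-contractible} and \cite[Proposition~1.1]{Bousfield-G_rational} respectively. Hence $A^{\cI}(X) \to \const_\cI A_{\mathrm{PL}}(X)$ is a positive level equivalence for every simplicial set~$X$, proving the first assertion. Since such a map is in particular an $\cI$-equivalence, it induces a quasi-isomorphism $A^{\cI}(X)_{h\cI} \to (\const_\cI A_{\mathrm{PL}}(X))_{h\cI}$, and composition with the natural quasi-isomorphism~\eqref{eq:hocolim-of-const} yields the desired $A^{\cI}(X)_{h\cI} \to A_{\mathrm{PL}}(X)$. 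The first leg is an $\cE$-algebra map by the naturality built into Theorem~\ref{thm:E-infinity-action-on-hocolim-I}; for the second, inspection of the explicit formula in that proof shows that on a constant diagram $\const_\cI R$ the $\widetilde{\Sigma}_s$-action on the simplicial replacement only shuffles the $\cN(\cI)$-indexing (which is collapsed by the augmentation to $R$), while the multiplicative component reduces to the iterated product in~$R$, thus reproducing the $\cE$-action on $R$ obtained by restriction along $\cE \to \cC$.

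The substantive input is the characteristic-zero step: integrally $\bC D^0$ is far from quasi-isomorphic to $k$ due to divided-power phenomena, which is precisely why $A_{\mathrm{PL}}$ is intrinsically rational and this comparison cannot be extended to arbitrary commutative ground rings. Everything else is formal from the model-theoretic framework already established.
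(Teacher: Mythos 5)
Your proposal is correct and follows essentially the same route as the paper: the characteristic-zero coinvariants argument to get a positive level equivalence at each simplicial level (with Lemma~\ref{eq:free-comm-I-dga-on-cofibrant-acyclic} supplying the $A^{\cI}_p$ side), then Proposition~\ref{prop:equivalence-on-extensions} together with Lemma~\ref{lem:AI-contractible} and \cite[Proposition~1.1]{Bousfield-G_rational}, and finally $(-)_{h\cI}$ composed with~\eqref{eq:hocolim-of-const}, whose $\cE$-compatibility on constant diagrams of cdgas you justify just as the paper does. Your write-up merely spells out some steps (the level-$p$ identification and the two-out-of-three triangle with the units) that the paper leaves implicit.
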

\begin{proof}
  In characteristic zero the homology groups of
  $(D^0)^{\otimes n}/\Sigma_n$ are isomorphic to the coinvariants
  $H_*(D^0)^{\otimes n}/\Sigma_n$ and the latter term is trivial for
  $n \geq 1$ because $D^0$ is acyclic. Therefore
  $\bC F^{\cI}_{\bld{1}}(D^0) \to \const_{\cI} \bC D^0 $ is an absolute
  level equivalence. The claim about general $X$ follows from
  Proposition~\ref{prop:equivalence-on-extensions} and the
  contractibility property of $A_{\mathrm{PL},\bullet}$ established
  in~\cite[Proposition 1.1]{Bousfield-G_rational}. Applying
  $(-)_{h\cI}$ to this descending level equivalence and composing with
  the natural quasi-isomorphism~\eqref{eq:hocolim-of-const} gives the
  quasi-isomorphism $A^{\cI}(X)_{h\cI} \to A_{\mathrm{PL}}(X)$. To see
  that it is an $\cE$-algebra map, we note that it follows from the
  definitions that~\eqref{eq:hocolim-of-const} is an $\cE$-algebra map
  when evaluated on a cdga.
\end{proof}

\section{Comparison of 
\texorpdfstring{$E_{\infty}$}{E-infinity}  structures}
Let $\cE$ be the Barratt--Eccles operad introduced in
Definition~\ref{def:Barratt-Eccles-operad}. We now define
$A\colon \sset^\op \to \Echk$ to be the composite
$A = (A^{\cI})_{h\cI}$ of the functor $A^{\cI}$ from the previous
section and the functor $(-)_{h\cI}\colon \CchIk \to \Echk$ resulting
from Theorem~\ref{thm:E-infinity-action-on-hocolim-I}. The following
proposition shows that $A$ is a \emph{cochain theory} in the sense
of~\cite{Mandell_cochain-multiplications}.

\begin{proposition} The functor $A\colon \sset^\op \to \Echk$ has the
  following properties.  
\begin{enumerate}[(i)]
\item It sends weak equivalence of simplicial sets to quasi-isomorphisms. 
\item For a sub-simplicial set $Y \subseteq X$, the induced map from
  $\mathrm{hofib}{(A(X/Y) \to A(*))}$ to $\mathrm{hofib}(A(X) \to A(Y))$
  is a quasi-isomorphism.
\item For a family $(X_{j})_{j \in J}$ of simplicial sets
  indexed by a set $J$, the canonical map  
$A(\coprod_{j \in J} X_{j}) \to \prod_{j \in J} A(X_{j})$ is a
quasi-isomorphism. 
\item It satisfies $H_0(A(*))\iso k$ and $H_n(A(*)) \iso 0$ if $n\neq 0$. 
\end{enumerate}
\end{proposition}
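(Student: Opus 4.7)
The plan is to reduce all four claims to classical facts about the singular cochain functor. First I would establish a natural zig-zag of quasi-isomorphisms $A(X)\simeq C(X;k)$ in $\chk$. By Theorem~\ref{thm:chain-complex-level-comparison}, the natural maps $A^{\cI}(X)\to B^{\cI}(X)\leftarrow C^{\cI}(X)$ are positive level equivalences between positive $\cI$-fibrant objects; any positive level equivalence is \emph{a fortiori} an $\cI$-equivalence, since the positive $\cI$-model structure is a left Bousfield localization of the positive level model structure. Applying $(-)_{h\cI}$ therefore produces quasi-isomorphisms. Composing with the natural quasi-isomorphism~\eqref{eq:hocolim-of-const} applied to $C^{\cI}(X)=\const_{\cI}C(X;k)$ yields the desired comparison.

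Property~(i) is then immediate from homotopy invariance of singular cohomology, and~(iv) reduces to the computation $C(*;k)\simeq k$. For~(iii), $A^{\cI}$ converts coproducts to products by Lemma~\ref{lem:D-colimits-limits}, and Lemma~\ref{lem:hI-vs-products} lets one commute $(-)_{h\cI}$ with products of positive $\cI$-fibrant objects, so the canonical map $A(\coprod_j X_j)\to \prod_j A(X_j)$ is a quasi-isomorphism. For~(ii), I would observe that $C(X;k)\to C(Y;k)$ is surjective (a function on $Y_q$ extends by zero to $X_q$) and that $C(X/Y;k)\to C(*;k)$ is split by $X/Y\to *$, so their homotopy fibres are computed by their kernels, the relative cochains $C(X,Y;k)$ and the reduced cochains $\tilde C(X/Y;k)$ respectively. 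The bijection $X_q\setminus Y_q\leftrightarrow (X/Y)_q\setminus\{*\}$ induces a chain-level isomorphism $\tilde C(X/Y;k)\iso C(X,Y;k)$, and transporting along the natural zig-zag above gives the required quasi-isomorphism.

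I expect the main obstacle to be keeping track of naturality in property~(ii): the comparison of homotopy fibres involves simplicial-set maps in both arguments, so one has to verify that the natural zig-zag $A\simeq C(-;k)$ from Step~1 is compatible with both squares being compared. This amounts to checking that $A^{\cI}\to B^{\cI}\leftarrow C^{\cI}\to \const_{\cI}C(-;k)$ is a zig-zag of natural transformations of functors $\sset^{\op}\to \chIk$, so that passage to $(-)_{h\cI}$ preserves the relevant homotopy fibre sequences up to quasi-isomorphism; once this is in place, the reduction to the classical fact about cochains of a quotient carries the day.
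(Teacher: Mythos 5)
Your proposal is correct, and for parts (i), (iii) and (iv) it is essentially the paper's argument: (i) comes from the comparison of Theorem~\ref{thm:chain-complex-level-comparison} (this is exactly Corollary~\ref{cor:hty-invariance-AI}), (iii) from Lemma~\ref{lem:D-colimits-limits} together with Lemma~\ref{lem:hI-vs-products} applied to the positive $\cI$-fibrant objects $A^{\cI}(X_j)$, and (iv) from the same comparison. Where you genuinely diverge is part (ii). The paper stays inside the $\cI$-framework: it writes $X/Y$ as the pushout of $* \leftarrow Y \to X$, notes that $A^{\cI}$ turns this into a pullback square of positive $\cI$-fibrant objects whose vertical maps are positive level fibrations (since $A^{\cI}$ sends cofibrations of simplicial sets to positive level fibrations), evaluates at $\bld{1}$ using Lemma~\ref{lem:hocolim-of-I-fibrant} to identify the square of homotopy colimits up to quasi-isomorphism, and concludes that the evaluated square is homotopy cartesian because it is a pullback along fibrations. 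You instead transport the whole question along the natural chain-level zig-zag $A \to (B^{\cI})_{h\cI} \leftarrow (C^{\cI})_{h\cI} \to C(-;k)$ (which is indeed available: positive level equivalences are $\cI$-equivalences, and~\eqref{eq:hocolim-of-const} handles the constant diagram $C^{\cI}(X)\iso \const_{\cI}C(X;k)$), reducing (ii) to the classical strict excision isomorphism $\tilde C(X/Y;k)\iso C(X,Y;k)$ together with the fact that for degreewise surjections the kernel computes the homotopy fibre; the naturality bookkeeping you flag is exactly what is needed and is unproblematic, since quasi-isomorphisms of arrows induce quasi-isomorphisms of homotopy fibres by the five lemma. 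The trade-off: the paper's route needs the fibration property of $A^{\cI}$ on cofibrations but never leaves the $\cI$-world, while your route leans more heavily on the comparison theorem and on classical facts about singular cochains, which makes (ii) more elementary at the cost of carrying the zig-zag through all four corners of the square. There is also no circularity in your reduction, since Theorem~\ref{thm:chain-complex-level-comparison} and~\eqref{eq:hocolim-of-const} are independent of this proposition.
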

\begin{proof}
  Part (i) follows from Corollary~\ref{cor:hty-invariance-AI}, part
  (iv) is an immediate consequence of
  Theorem~\ref{thm:chain-complex-level-comparison}, and part (iii)
  follows from Lemma~\ref{lem:hI-vs-products} because $A^{\cI}$ takes
  coproducts in $\sset$ to products of fibrant objects in $\chIk$.

  For (ii), we view $X/Y$ as the pushout of $* \leftarrow Y \to
  X$. The functor $A^{\cI}$ sends this pushout to a pullback diagram
  displayed as the front face in the following cube:
\[\xymatrix@-1.5pc{
     &A^{\cI}(X/Y)'  \ar[rr] \ar@{->>}'[d][dd] && A^{\cI}(X)'\ar@{->>}[dd]\\
      A^{\cI}(X/Y) \ar[ur]^{\sim} \ar[rr] \ar@{->>}[dd] && A^{\cI}(X) \ar[ur]^{\sim}\ar@{->>}[dd]  \\
     &A^{\cI}(*)' \ar'[r][rr]  && A^{\cI}(Y)'\\
     A^{\cI}(*) \ar[ur]^{\sim} \ar[rr] && A^{\cI}(Y)  \ar[ur]^{\sim}
  }
\]
Here the vertical maps on the front are descending level fibrations
between descending $\cI$-fibrant objects by
Theorem~\ref{thm:AI-KI-adjunctions-intro}. The bottom face is obtained
by applying a fibrant replacement in the absolute $\cI$-model
structure to the map $A^{\cI}(*) \to A^{\cI}(Y)$, and the inwards
pointing arrows on the bottom are descending level equivalences by
Corollary~\ref{cor:fibrant-repl-of-descending-I-fib}.  The right hand
face is obtained by factoring $A^{\cI}(X) \to A^{\cI}(Y)'$ as an
acyclic cofibration $A^{\cI}(X) \to A^{\cI}(X)'$ followed by a
fibration $A^{\cI}(X)' \to A^{\cI}(Y)'$ in the absolute $\cI$-model
structure. Then $A^{\cI}(X) \to A^{\cI}(X)'$ is also a descending
level equivalence by
Corollary~\ref{cor:fibrant-repl-of-descending-I-fib}.  The last term
$A^{\cI}(X/Y)'$ is obtained by requiring the back face to be a
pullback. Right properness of the descending level model structure
implies that $A^{\cI}(X/Y) \to A^{\cI}(X/Y)'$ is a descending level
equivalence. It then follows from Lemma~\ref{lem:hocolim-of-I-fibrant}
that the square obtained by applying $(-)_{h\cI}$ to the front face is
quasi-isomorphic to the square obtained by evaluating the back face at
the object $\bld{0}$. The latter square is homotopy cartesian by
construction.
\end{proof}
Let $\cE^{\mathrm{cof}}$ be a cofibrant $E_{\infty}$ operad in the
sense of~\cite[Definition 4.2]{Mandell_cochain-multiplications}. Then
there exists an operad map $\cE^{\mathrm{cof}} \to \cE$ to the
Barratt--Eccles operad~\cite[Lemma
4.5]{Mandell_cochain-multiplications}, and by restricting along
$\cE^{\mathrm{cof}} \to \cE$ we may view $A$ as a functor to
$\cE^{\mathrm{cof}}$-algebras. On the other hand, the cosimplicial
normalization functor for the category $\Ecchk$ provided
by~\cite[Theorem 5.8]{Mandell_cochain-multiplications} allows one to
lift the ordinary cochain functor $C\colon \sset^\op \to \chk$ to a
functor with values in $\Ecchk$ (compare~\cite[\S
1]{Mandell_cochain-multiplications}). We are now in a situation
where~\cite[Main Theorem]{Mandell_cochain-multiplications} applies:
\begin{theorem}\label{thm:E-infty-comparison}
The functor $A \colon \sset^\op \to \Ecchk$ is naturally quasi-isomorphic
to the singular cochain functor $C \colon \sset^\op \to \Ecchk$. \qed
\end{theorem}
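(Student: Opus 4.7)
The plan is to apply Mandell's uniqueness theorem for cochain theories \cite[Main Theorem]{Mandell_cochain-multiplications}, which asserts that any two functors $\sset^{\op} \to \Ecchk$ satisfying the four axioms of a cochain theory are related by a natural zig-zag of quasi-isomorphisms. Since all the preparatory work has been done in the preceding material, the proof reduces to checking that the hypotheses of Mandell's theorem apply to both $A$ and $C$.

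First I would address the receiving category. Mandell's theorem requires functors to $\Ecchk$, whereas $A = (A^{\cI})_{h\cI}$ takes values in $\Echk$ by Theorem~\ref{thm:E-infinity-action-on-hocolim-I}. The remark preceding the theorem handles this: one fixes an operad map $\cE^{\mathrm{cof}} \to \cE$ supplied by \cite[Lemma 4.5]{Mandell_cochain-multiplications} and restricts along it, so that $A$ becomes a functor to $\Ecchk$. For the singular cochain functor $C$, the cosimplicial normalization of \cite[Theorem 5.8]{Mandell_cochain-multiplications} provides the required $\cE^{\mathrm{cof}}$-algebra refinement. Thus both functors are now objects of the same target category.

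Next I would verify the four cochain theory axioms for $A$; these are exactly the content of the proposition immediately preceding the theorem, so nothing more is needed. For $C$, homotopy invariance, the Mayer--Vietoris type fiber sequence, conversion of coproducts to products, and the computation of $H_*(C(*))$ are classical properties of singular cochains, and in the $\cE^{\mathrm{cof}}$-algebra lifted form these are recorded in~\cite[\S 1]{Mandell_cochain-multiplications}.

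With both $A$ and $C$ identified as cochain theories valued in $\Ecchk$, Mandell's uniqueness theorem produces a natural zig-zag of $\cE^{\mathrm{cof}}$-algebra quasi-isomorphisms between them, yielding the statement. The only real obstacle is conceptual rather than technical: one must arrange that $A$ genuinely satisfies the homotopical axioms \emph{as an $E_{\infty}$ dga}, not merely on underlying chain complexes. This is exactly why the fiber sequence step (ii) in the preceding proposition is proved using Lemma~\ref{lem:hocolim-of-I-fibrant}, which reduces the homotopy pullback check on $(-)_{h\cI}$ to evaluation at $\bld{1}$ where the square of positive fibrant $\cI$-chain complexes is visibly a pullback along a fibration; all the other axioms are direct consequences of results already proved for $A^{\cI}$ combined with the homotopical good behaviour of $(-)_{h\cI}$.
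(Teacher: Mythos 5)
Your proposal is correct and follows exactly the paper's line of argument: restrict $A$ along an operad map $\cE^{\mathrm{cof}} \to \cE$, lift $C$ to $\Ecchk$ via Mandell's cosimplicial normalization, verify the cochain theory axioms for $A$ via the preceding proposition (with the fiber-sequence axiom reduced to evaluation at $\bld{1}$ by Lemma~\ref{lem:hocolim-of-I-fibrant}), and invoke Mandell's uniqueness theorem. This is precisely why the paper states the theorem with no further proof.
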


\begin{remark}
  It is well-known how to express the cup-$i$ products on the singular
  cohomology of spaces using the Barratt-Eccles operad, see for
  instance \cite[Theorem 2.1.1]{Berger-F_combinatorial}. This way the
  $\cE$-algebra structure on $A(X) = A^{\cI}(X)_{h\cI}$ gives rise to
  cup-$i$ products, and the previous theorem shows that they are
  equivalent to the usual cup-$i$ products on the cochain algebra.
\end{remark}

Theorem~\ref{thm:E-infty-comparison} also allows us to express Mandell's theorem~\cite{Mandell_cochains-homotopy-type} using $A^{\cI}$: 
\begin{proof}[Proof of Theorem~\ref{thm:mandell-intro}]
  Let $X$ and $Y$ be two finite type nilpotent spaces. By
  Proposition~\ref{prop:hI-equivalence-hty-categories}, the
  commutative $\cI$-dgas $A^{\cI}(X;\mathbb Z)$ and
  $A^{\cI}(Y;\mathbb Z)$ are $\cI$-equivalent in $\CchIZ$ if and only
  if $A^{\cI}(X;\mathbb Z)_{h\cI}$ and $A^{\cI}(Y;\mathbb Z)_{h\cI}$
  are quasi-isomorphic in $\EchZ$, which is in turn equivalent to
  being quasi-isomorphic in $\EcchZ$. By
  Theorem~\ref{thm:E-infty-comparison}, this holds if and only if
  $C^*(X;\mathbb Z)$ and $C^*(Y;\mathbb Z)$ are quasi-isomorphic in
  $\EcchZ$. By~\cite[Main Theorem]{Mandell_cochains-homotopy-type},
  this is the case if and only if $X$ and $Y$ are weakly equivalent.
\end{proof}


\begin{bibdiv}
\begin{biblist}

\bib{Barwick_left}{article}{
   author={Barwick, Clark},
   title={On left and right model categories and left and right Bousfield
   localizations},
   journal={Homology Homotopy Appl.},
   volume={12},
   date={2010},
   number={2},
   pages={245--320},
   issn={1532-0073},
   review={\MR{2771591}},
}
  
\bib{Berger-F_combinatorial}{article}{
      author={Berger, Clemens},
      author={Fresse, Benoit},
       title={Combinatorial operad actions on cochains},
        date={2004},
        ISSN={0305-0041},
     journal={Math. Proc. Cambridge Philos. Soc.},
      volume={137},
      number={1},
       pages={135\ndash 174},
         url={https://doi.org/10.1017/S0305004103007138},
}

\bib{Bousfield-G_rational}{article}{
      author={Bousfield, A.~K.},
      author={Gugenheim, V. K. A.~M.},
       title={On {${\rm PL}$} de {R}ham theory and rational homotopy type},
        date={1976},
        ISSN={0065-9266},
     journal={Mem. Amer. Math. Soc.},
      volume={8},
      number={179},
       pages={ix+94},
}

\bib{CLM_homology-iterated-loop-spaces}{book}{
   author={Cohen, Frederick R.},
   author={Lada, Thomas J.},
   author={May, J. Peter},
   title={The homology of iterated loop spaces},
   series={Lecture Notes in Mathematics, Vol. 533},
   publisher={Springer-Verlag, Berlin-New York},
   date={1976},
   pages={vii+490},
 }

 \bib{Cole_mixing}{article}{
   author={Cole, Michael},
   title={Mixing model structures},
   journal={Topology Appl.},
   volume={153},
   date={2006},
   number={7},
   pages={1016--1032},
   issn={0166-8641},
   doi={10.1016/j.topol.2005.02.004},
}

\bib{davislueck}{article}{
      author={Davis, James F.},
      author={L\"uck, Wolfgang},
       title={Spaces over a category and assembly maps in isomorphism
              conjectures in $K$- and $L$-theory},
        date={1998},
     journal={$K$-theory},
      volume={15},
      number={3},
       pages={201--252},
}

\bib{Dwyer-S_model}{incollection}{
      author={Dwyer, W.~G.},
      author={Spali\'{n}ski, J.},
       title={Homotopy theories and model categories},
        date={1995},
   booktitle={Handbook of algebraic topology},
   publisher={North-Holland, Amsterdam},
       pages={73\ndash 126},
         url={https://doi.org/10.1016/B978-044481779-2/50003-1},
      review={\MR{1361887}},
}

\bib{Dugger_replacing}{article}{
      author={Dugger, Daniel},
       title={Replacing model categories with simplicial ones},
        date={2001},
        ISSN={0002-9947},
     journal={Trans. Amer. Math. Soc.},
      volume={353},
      number={12},
       pages={5003\ndash 5027 (electronic)},
         url={http://dx.doi.org/10.1090/S0002-9947-01-02661-7},
}

\bib{FHT}{book}{
      author={F\'elix, Yves},
      author={Halperin, Stephen},
      author={Thomas, Jean-Claude},
       title={Rational homotopy theory},
      series={Graduate Texts in Mathematics},
   publisher={Springer-Verlag, New York},
        date={2001},
      volume={205},
        ISBN={0-387-95068-0},
         url={https://doi.org/10.1007/978-1-4613-0105-9},
}

\bib{fresse-bar}{article}{
author={Fresse, Benoit},
       title={Iterated bar complexes of $E$-infinity algebras and
         homology theories}, 
        date={2011},
     journal={ Algebr. Geom. Topol.},
      volume={11},
      number={2},
       pages={747\ndash 838},
  url={http://dx.doi.org/10.2140/agt.2011.11.747},
}

\bib{HSS-retractive}{misc}{
      author={Hebestreit, Fabian},
      author={Sagave, Steffen},
      author={Schlichtkrull, Christian},
       title={Multiplicative parametrized homotopy theory via symmetric spectra
  in retractive spaces},
        date={2019},
        note={\arxivlink{1904.01824}, to appear in Forum of Mathematics, Sigma},
}

\bib{hess}{incollection}{
author={Hess, Kathryn}, 
title={Rational homotopy theory: a brief introduction},
date={2007},
booktitle={Interactions between homotopy theory and algebra},
series={Contemp. Math.},
volume={436},
publisher={Amer. Math. Soc.},
pages={175--202},
note={Contemp. Math. 436, Amer. Math. Soc.}
}

\bib{Hinich-S_homotopy-limit}{incollection}{
      author={Hinich, Vladimir~A.},
      author={Schechtman, Vadim~V.},
       title={On homotopy limit of homotopy algebras},
        date={1987},
   booktitle={{$K$}-theory, arithmetic and geometry ({M}oscow, 1984--1986)},
      series={Lecture Notes in Math.},
      volume={1289},
   publisher={Springer, Berlin},
       pages={240\ndash 264},
         url={https://doi.org/10.1007/BFb0078370},
}

\bib{Hirschhorn_model}{book}{
      author={Hirschhorn, Philip~S.},
       title={Model categories and their localizations},
      series={Mathematical Surveys and Monographs},
   publisher={American Mathematical Society},
     address={Providence, RI},
        date={2003},
      volume={99},
        ISBN={0-8218-3279-4},
}

\bib{Hovey_model}{book}{
      author={Hovey, Mark},
       title={Model categories},
      series={Mathematical Surveys and Monographs},
   publisher={American Mathematical Society},
     address={Providence, RI},
        date={1999},
      volume={63},
        ISBN={0-8218-1359-5},
}

\bib{Joachimi_Diplom}{misc}{
      author={Joachimi, Ruth},
       title={About a {Q}uillen equivalence between chain complexes and
  diagrams of chain complexes},
        date={2011},
        note={Diplom\-arbeit, Universit\"at Bonn},
}

\bib{Karoubi}{article}{
       author={Karoubi, Max},
        title={Cocha{\^i}nes quasi-commutatives en topologie alg\'ebrique}, 
         date={2009},
      journal={Pure and Applied Mathematics Quaterly},
       volume={5}, 
       number={1},
       pages={1\ndash 68},
}

\bib{MacLane-homology}{book}{
      author={Mac~Lane, Saunders},
       title={Homology},
      series={Die Grundlehren der mathematischen Wissenschaften, Bd. 114},
   publisher={Springer-Verlag, Berlin-G\"ottingen-Heidelberg},
        date={1963},
}

\bib{Mandell_cochain-multiplications}{article}{
      author={Mandell, Michael~A.},
       title={Cochain multiplications},
        date={2002},
        ISSN={0002-9327},
     journal={Amer. J. Math.},
      volume={124},
      number={3},
       pages={547\ndash 566},
  url={http://muse.jhu.edu/journals/american_journal_of_mathematics/v124/124.3mandell.pdf}, 
}

\bib{Mandell_cochains-homotopy-type}{article}{
      author={Mandell, Michael~A.},
       title={Cochains and homotopy type},
        date={2006},
        ISSN={0073-8301},
     journal={Publ. Math. Inst. Hautes \'Etudes Sci.},
      number={103},
       pages={213\ndash 246},
         url={https://doi.org/10.1007/s10240-006-0037-6},
}

\bib{may-gils}{book}{
    author={May, J.~Peter}, 
     title={The geometry of iterated loop spaces}, 
    series={Lectures Notes in Mathematics Vol. 271}, 
 publisher={Springer-Verlag, Berlin-New York}, 
      year={1972}, 
     pages={viii+175 pp.},
}

\bib{maypermcat}{incollection}{
      author={May, J.~Peter}, 
       title={$E_\infty$ spaces, group completions, and permutative
  categories}, 
        date={1974},
   booktitle={New developments in topology (Proc. Sympos. Algebraic
  Topology, Oxford, 1972)},  
      series={London Math. Soc. Lecture Note Ser., No. 11}, 
   publisher={Cambridge Univ. Press, London}, 
      pages={61\ndash 93}, 
    }

\bib{McClure-Smith_multivariable}{article}{
   author={McClure, James E.},
   author={Smith, Jeffrey H.},
   title={Multivariable cochain operations and little $n$-cubes},
   journal={J. Amer. Math. Soc.},
   volume={16},
   date={2003},
   number={3},
   pages={681--704},
   issn={0894-0347},
   url={https://doi.org/10.1090/S0894-0347-03-00419-3},
}

\bib{Pavlov-S_sym-operads}{article}{
   author={Pavlov, Dmitri},
   author={Scholbach, Jakob},
   title={Symmetric operads in abstract symmetric spectra},
   journal={J. Inst. Math. Jussieu},
   volume={18},
   date={2019},
   number={4},
   pages={707--758},
   issn={1474-7480},
   url={https://doi.org/10.1017/s1474748017000202},
}

\bib{Petersen_configuration}{article}{
   author={Petersen, Dan},
   title={Cohomology of generalized configuration spaces},
   journal={Compos. Math.},
   volume={156},
   date={2020},
   number={2},
   pages={251--298},
   issn={0010-437X},
   url={https://doi.org/10.1112/s0010437x19007747},
 }

\bib{pira-hodge}{article}{
      author={Pirashvili, Teimuraz}, 
       title={Hodge decomposition for higher order Hochschild
         homology}, 
        date={2000},
     journal={Ann. Sci. \'Ecole Norm. Sup.},
      volume={33}, 
      number={4}, 
       pages={151\ndash 179}, 
         url={https://doi.org/10.1016/S0012-9593(00)00107-5}, 
} 

\bib{Richter-S_algebraic}{article}{
      author={Richter, Birgit},
      author={Shipley, Brooke},
       title={An algebraic model for commutative {$H\mathbb{Z}$}-algebras},
        date={2017},
        ISSN={1472-2747},
     journal={Algebr. Geom. Topol.},
      volume={17},
      number={4},
       pages={2013\ndash 2038},
         url={https://doi.org/10.2140/agt.2017.17.2013},
}

\bib{rg}{article}{
      author={Rodr\'iguez Gonz\'alez, Beatriz},
       title={Realizable homotopy colimits},
        date={2014},
     journal={Theory Appl. Categ.},
      volume={29},
      number={22}, 
       pages={609\ndash 634},
}

\bib{Sagave-S_diagram}{article}{
      author={Sagave, Steffen},
      author={Schlichtkrull, Christian},
       title={Diagram spaces and symmetric spectra},
        date={2012},
        ISSN={0001-8708},
     journal={Adv. Math.},
      volume={231},
      number={3-4},
       pages={2116\ndash 2193},
         url={http://dx.doi.org/10.1016/j.aim.2012.07.013},
}

\bib{Schlichtkrull_Thom-symmetric}{article}{
      author={Schlichtkrull, Christian},
       title={Thom spectra that are symmetric spectra},
        date={2009},
     journal={Doc. Math.},
      volume={14},
       pages={699\ndash 748},
}

\bib{shipley_THH}{article}{
   author={Shipley, Brooke},
   title={Symmetric spectra and topological Hochschild homology},
   journal={$K$-Theory},
   volume={19},
   date={2000},
   number={2},
   pages={155--183},
   issn={0920-3036},
   doi={10.1023/A:1007892801533},
}
	
\bib{sullivan}{article}{
        author={Sullivan, Dennis},
         title={Infinitesimal computations in topology}, 
          date={1977},  
       journal={Publ. Math. Inst. Hautes \'Etudes Sci.},  
        volume={47}, 
         pages={269\ndash 331}, 
}

\end{biblist}
\end{bibdiv}

\end{document}